\DeclareMathAlphabet{\mathb}{OML}{cmm}{b}{it}
\newcommand{\N}{\mathbb{N}}		% N double barre (entiers)
\newcommand{\R}{\mathbb{R}}		% R double barre (réels)
\newcommand{\fonc}[3]{#1:  #2  \rightarrow  #3}					% fonction sans 2ème ligne
\newcommand{\syst}[1]{\left \{ \begin{array}{l} #1 \end{array} \right. \kern-\nulldelimiterspace}	% système
\newcommand{\prox}{\mathrm{prox}}
\newcommand{\argmin}{\mathrm{argmin}}
\newcommand{\dom}{\mathrm{dom}\,}
\renewcommand{\int}{\mathrm{int}\,}
\newcommand{\minimize}[2]{\ensuremath{\underset{\substack{{#1}}}{\mathrm{minimize}}\;\;#2 }}
\newcommand{\argmind}[2]{\ensuremath{\underset{\substack{{#1}}}%
{\mathrm{argmin}}\;\;#2 }}
\newcommand{\lsc}{\Phi_{lsc}}
\newcommand{\proxlsc}{\mathrm{prox}^{\mathrm{lsc}}}
\newlength{\algorithmboxrule}
\newcommand{\algorithmboxcolor}{white}
\xpatchcmd*{\algocf@caption@boxruled}{0.0pt}{2\algorithmboxrule}{}{}
\xpatchcmd*{\algocf@caption@boxruled}{\vrule}{\vrule width \algorithmboxrule}{}{}
\xpatchcmd{\algocf@caption@boxruled}{\hrule}{\hrule height \algorithmboxrule}{}{}
\xpretocmd{\algocf@caption@boxruled}{\color{\algorithmboxcolor}}{}{}
\newtheorem{lemma}{Lemma}[section]
\newtheorem{proposition}[lemma]{Proposition}
\newtheorem{theorem}[lemma]{Theorem}
\theoremstyle{definition}
\newtheorem{definition}[lemma]{Definition}
\newtheorem{remark}[lemma]{Remark}
\newtheorem{example}[lemma]{Example}
\newtheorem{assumption}[lemma]{Assumption}
\title{Primal-Dual algorithms for Abstract convex functions with respect to quadratic functions}
\author{ Ewa Bednarczuk\thanks{Warsaw University of Technology,  00-662 Warsaw, Koszykowa 75, Poland} \thanks{Systems Research Institute, PAS,  01-447 Warsaw, Newelska 6, Poland}
% \and Dirk Lorenz\thanks{Center for Industrial Mathematics, University of Bremen, Postfach 330440, 28334 Bremen, Germany,}
\and
The Hung Tran\footnotemark[2]}
\begin{document}
\maketitle

\begin{abstract}
We consider the saddle point problem where the objective functions are abstract convex with respect to the class of quadratic functions. We propose primal-dual algorithms using the corresponding abstract proximal operator and investigate the convergence under certain restrictions. We test our algorithms by several numerical examples. 
\end{abstract}

\section{Introduction}
We consider an instance of the following saddle point problem
\begin{equation}
    \label{prob:DCP}
\inf_{x\in X} \sup_{\psi \in \Psi} f(x)+\psi (Lx) -g^*_\Psi (\psi),  \tag{SP}
\end{equation}
where $X$ and $Y$ are Hilbert spaces, $f:X\to(-\infty,+\infty],g:Y\to(-\infty,+\infty]$ are proper $\Phi$-convex and $\Psi$-convex respectively with respect to the class of functions $\Phi$ and $\Psi$, while $L:X\to Y$ is a continuous linear operator. 
{ For the problem to be well-defined, we assume that $L(\dom f)\cap \dom g\neq \emptyset$, where $\dom f=\{x\in X: f(x)<+\infty\}$ is the domain of $f$.}
The $\Phi$-convexity is a generalization of convexity which is called abstract convex. The concept of abstract convex functions had been formally introduced by Rubinov \cite{Rub2013}, Pallaschke and Rolewicz \cite{Pall2013}, Singer \cite{sin1997} i.e. given a collection of function $\Psi:=\{\psi:X\to \mathbb{R}\}$, a function $f:X\to (-\infty,+\infty]$ is called $\Psi$-convex if it can be written as
\[
(\forall x\in X) \quad f(x) = \sup_{\psi\in\Psi,\psi\leq f} \psi (x).
\]

\emph{This paper aims to investigate the convergence of primal-dual algorithm for problem \eqref{prob:DCP} when $g$ is convex and $g$ is $\Phi$-convex with $L$ is an identity operator.}

It has been known that a proper lsc convex function is abstract convex with respect to the class of affine functions \cite[Corollary 13.42]{Bau2011}. 
For a more general functions of $\Phi$, the class of $\Phi$-convex encompass both convex and nonconvex functions e.g. star-convex \cite{rubinov1999}, weakly/strongly convex \cite{bednarczuk2023duality}.
Recent attention has been focused on algorithms development for nonconvex problems and the topic of abstract convexity has gained popularity \cite{laude2023anisotropic,rakotomandimby2024subgradient,laude2025anisotropic,oikonomidis2025forward}. 
Because with abstract convexity, one can define additional structures similar to the classical convex function such as conjugation and subdifferentials. Therefore, abstract convex functions can be analyzed globally. This aspect allows us to design algorithms which converges to the global minima in the nonconvex setting, while standard analysis only gives convergence to a critical point. On the other hand, to obtain convergence in nonconvex setting, additional conditions are needed, such as K{\L} (or P{\L}) inequality \cite{Bolte2014_Proximal}, error bound condition \cite{cuong2022error}, subdifferentials error bound \cite{atenas2023unified} etc. In practice, these conditions can be difficult to verify as the knowledge of critical points are needed.  

A typical case of problem \eqref{prob:DCP} is the saddle point problem 
\begin{equation}
    \label{prob: dual g convex, f weakly convex}
    \inf_{x\in X} \sup_{y\in Y} f(x)+\langle Lx,y\rangle -g^*(y),
\end{equation}
where $g:Y\to (\infty,+\infty]$ is proper lsc convex and $L:X\to Y$ is a bounded linear operator. Saddle point problem, or min-max optimization has been gaining popularity in applications, especially in machine learning, including generative adversarial networks (GANs) \cite{goodfellow2014generative}, deep learning \cite{sinha2018certifying}, distributed computing \cite{mateos2010distributed} and much more.

Our contribution in this paper is the discussion of primal-dual algorithm for solving problem \eqref{prob: dual g convex, f weakly convex} when $f$ is $\Phi$-convex. This is an extension of our paper \cite{bednarczuk2025proximal} using the $\Phi$-proximal operator for solving nonconvex problem. We obtain convergence of proximal point algorithm, forward-backward algorithm for $\Phi$-convex function without additional regularity condition. Hence, we attempt to investigate the convergence of primal-dual algorithm for the saddle point problem in the presence of $\Phi$-convexity. 

The structure of the paper is as follow: In Section \ref{sec: preli}, we present the definition of abstract convex function as well as its properties when $\Phi=\Phi_{lsc}^\mathbb{R}$ is the class of quadratic function. We introduce the defintion of $\Phi_{lsc}^\mathbb{R}$-proximal operator and its characteristics. In Section \ref{sec: pd alg lsc prox}, we propose $\Phi_{lsc}^\mathbb{R}$ primal-dual algorithm for solving problem \eqref{prob: dual g convex, f weakly convex} and discuss its convergence in Theorem \ref{thm: pd lsc prox convergence}. In Section \ref{sec: full lsc problem}, we examine our primal-dual algorithm for problem \eqref{prob:DCP} in the full $\Phi_{lsc}^\mathbb{R}$-setting. Finally, we present some numerical examples demonstrating our discussed algorithms in Section \ref{sec: lsc-pd numerical}.

\section{Preliminaries}
\label{sec: preli}
\subsection{\texorpdfstring{$\Phi_{lsc}^\mathbb{R}$}-Convex Functions}
In this paper, we consider $X$ to be Hilbert with inner product $\langle \cdot,\cdot\rangle: X\times X\to \mathbb{R}$ and the norm $\Vert x\Vert =\sqrt{\langle x,x\rangle}$.
For two functions $f,g:X\to (-\infty,+\infty]$, $f\leq g$ means that $f(x)\leq g(x)$ for all $x\in X$.
The domain of $f$ is the set $\text{dom }f=\{ x\in X: f(x)<+\infty\}$, and $f$ is proper if $\dom f \neq \emptyset$ and $f(x)>-\infty$ for every $x\in X$. We say that $f$ is lower semi-continuous (lsc) at $x\in X$ if $f(x)\leq \liminf_{x_n \to x} f(x_n)$.
We denote $\mathbb{R}_+, \mathbb{R}_-$ as the set of nonnegative and non-positive number, respectively.
We use the following convention: $+\infty - \infty = +\infty$.
Let us define the class of quadratic function of the form
\begin{equation}
\label{def: lsc class}
    \Phi_{lsc}^\mathbb{R} :=\{ \phi=(a,u)\in \mathbb{R}\times X: \phi(x) = -a \Vert x\Vert^2 +\langle u,x\rangle\}.
\end{equation}
\begin{definition}
Let $X$ be a Hilbert space and $f:X\to(-\infty,+\infty]$. We say that $f$ is $\Phi_{lsc}^\mathbb{R}$-convex on $X$ if the following holds,
\begin{equation*}
(\forall x\in X) \quad f(x) = \sup_{\phi\in\Phi_{lsc}^\mathbb{R}, \phi \leq f} \phi (x).    
\end{equation*}
\end{definition}
The definition of $\Phi_{\lsc}^\mathbb{R}$-convexity belongs a general concept called abstract convexity which has been investigated in \cite{Rub2013,Pall2013,sin1997} to develop convexity without linearity. 
The reason we choose the class $\Phi_{lsc}^\mathbb{R}$ is that it covers a large class of functions with nice properties and it is closely related to strongly and weakly convex functions \cite{bednarczuk2025proximal}. It has been proved in \cite[Proposition 6.3]{Rub2013} that all lower-semicontinous functions is $\Phi_{lsc}^\mathbb{R}$-convex. When $\Phi_{lsc}^\mathbb{R}$ is replaced by a class of affine function, $f$ is proper lsc convex iff it is $\Phi_{lsc}^\mathbb{R}$-convex \cite[Theorem 9.20]{Bau2011}.

Together with $\Phi_{lsc}^\mathbb{R}$-convexity, we present the definitions of $\Phi_{lsc}^\mathbb{R}$-conjugate and $\Phi_{lsc}^\mathbb{R}$-subdifferentials.
\begin{definition}[$\Phi_{lsc}^\mathbb{R}$-conjugate]
\label{def:lsc-conj}
Let $f:X\to (-\infty,+\infty]$ be proper, the $\lsc^\R$-conjugate of $f$ is defined as $f^*_\Phi :\Phi_{lsc}^\mathbb{R}\to\mathbb{R}$,
\begin{align}
\label{eq: def lsc conj}
f^*_\Phi (\phi) = \sup_{x\in X} \phi(x)-f(x),
\end{align}
and the $\Phi_{lsc}^\mathbb{R}$-biconjugate of $f$ defined on $X$ is
\begin{align}
\label{eq: def lsc biconj}
f^{**}_\Phi (x) = \sup_{\phi\in \Phi_{lsc}^\mathbb{R}} \phi(x)-f^*_\Phi (\phi).
\end{align}
\end{definition}
When $\Phi_{lsc}^\mathbb{R}$ is limited to the class of linear functions ($a=0$) and $X$ is a Banach space, we obtain Fenchel conjugation with the duality pairing $\langle \cdot,\cdot\rangle_{X^*\times X}$. In the case of $\Phi_{lsc}^\mathbb{R}$, we can equip $\Phi_{lsc}^\mathbb{R}$ with the norm and inner product as $\phi(x)=\langle (a,u), (-\Vert x\Vert^2,x)\rangle_{\Phi_{lsc}^\mathbb{R} \times (\mathbb{R}_-\times X)}$. 
Another formulation of abstract convexity is considering $\phi\in \Phi_{lsc}^\mathbb{R}$ as an element and $\phi(x)=c(x,\phi)$ with $c:X\times \Phi_{lsc}^\mathbb{R} \to \mathbb{R}$ as a coupling function e.g. \cite{laude2023anisotropic,laude2025anisotropic,oikonomidis2025forward}. 
We can also consider $\phi(x)$ as $x(\phi)$, so for the functions defined on $\Phi_{lsc}^\mathbb{R}$, we call them $X$-convex to distinguish with $\Phi_{lsc}^\mathbb{R}$-convex functions defined on $X$ \cite[Proposition 1.2.3]{Pall2013}.

Hence, $\Phi_{lsc}^\mathbb{R}$ plays the role of the "\emph{dual}" of $X$, similar to Fenchel conjugate.
From the definition, $f^*_\Phi (\phi)$ is convex on $\Phi_{lsc}^\mathbb{R}$ and so is $\Phi_{lsc}^\mathbb{R}$-convex, while $f^{**}_\Phi$ is $\Phi_{lsc}^\mathbb{R}$-convex on $X$. 
% In fact, the properties of $\Phi_{lsc}^\mathbb{R}$-conjugate function are analogous to Fenchel conjugate.
% \begin{proposition}
% Let $f:X\to (-\infty,+\infty]$ be proper, the followings hold
% \begin{enumerate}[label=\roman*]
%     \item $f^*_\Phi$ is convex on $\Phi_{lsc}^\mathbb{R}$, $f^{**}_\Phi$ is $\Phi_{lsc}^\mathbb{R}$-convex on $X$ and $f^{**}_\Phi \leq f$.
%     \item $f(x) +f^*_\Phi (\phi) \geq \phi(x)$ for any $x\in X$ and $\phi\in \Phi_{lsc}^\mathbb{R}$.
% \end{enumerate}
% In addition, $f$ is $\Phi_{lsc}^\mathbb{R}$-convex on $X$ iff $f(x)=f^{**}_\Phi (x)$ for all $x\in X$.
% \end{proposition}
% \begin{proof}
% A general version of this results can be found in \cite[Theorem 2.3]{bednarczuk2023duality} and \cite[Proposition 1.2.2]{Pall2013}
% \end{proof}

\begin{example}
Consider the function $h:X\to \mathbb{R}, h(x)=c \Vert x\Vert^2$, with $c\in\mathbb{R}$. The $\Phi_{lsc}^\mathbb{R}$-conjugate of $h$ at $\phi\in \Phi_{lsc}^\mathbb{R}$ is
\[
h^*_\Phi (\phi) = h^*_\Phi (a,u) =
\begin{cases}
0 & a=-c,u=0\\
\frac{\Vert u\Vert ^{2}}{4\left(a+c\right)} & a>-c\\
+\infty & \text{otherwise}.
\end{cases}
\]
\end{example}
If $a=0,c=1/2$, we recover the convex conjugate of $h(x)$ (see \cite[Example 13.6]{Bau2011}).
\begin{remark}
\
\begin{itemize}
    \item If $f$ is proper $\Phi_{lsc}^\mathbb{R}$-convex and $\sup_{ \phi=(a,u)\in \Phi_{lsc}^\mathbb{R}, \phi\leq f} a <+\infty$ then $\dom f$ is a convex set. Indeed, assume by contradiction, there exists $x,y\in \dom f$ and $\lambda\in[0,1]$ such that $f(\lambda x+(1-\lambda)y) = +\infty$. By the form of function $\phi\in\Phi_{lsc}^\mathbb{R}$, we have
    \begin{align*}
    +\infty =f(\lambda x+(1-\lambda)y) &= \sup_{\phi\leq f} \phi (\lambda x+(1-\lambda)y) \\
    &\leq \lambda \sup_{\phi\leq f} \phi(x)+ (1-\lambda) \sup_{\phi\leq f} \phi(x) +\lambda (1-\lambda) \sup_{\phi\leq f} a\Vert x-y\Vert^2 \\
    & = \lambda f(x)+(1-\lambda) f(y) +\lambda (1-\lambda) \sup_{\phi\leq f} a\Vert x-y\Vert^2.
    \end{align*}
    This implies $\sup_{\phi\leq f} a=+\infty$ which contradicts $\sup_{\phi\leq f} a <+\infty$.
    \item The convexity of $f^*_\Phi$ is a consequence of the fact that $\Phi_{lsc}^\mathbb{R}$ is a convex set. 
\end{itemize}
\end{remark}
Next, we define $\Phi_{lsc}^\mathbb{R}$-subdifferentials, see e.g. \cite{Pall2013,Rub2013}.
\begin{definition}[$\Phi_{lsc}^\mathbb{R}$-subdifferentials]
\label{def:lsc-subdiff}
Let $f:X\to (-\infty,+\infty]$ be proper, the $\lsc^\R$-subgradient of $f$ at $x_{0}\in\mathrm{dom }f$ is an element $\phi\in\lsc^\R$ such that
\begin{align}
\label{eq: def subdiff}
\left(\forall y\in X\right)\quad f\left(y\right)-f\left(x_{0}\right) & \geq\phi\left(y\right)-\phi\left(x_{0}\right).
\end{align}
We denote the collection of all $\phi$ satisfying \eqref{eq: def subdiff} as $\partial_{lsc}^\R f(x_0)$.
If $a=0$ in $\Phi_{lsc}^\mathbb{R}$, we have $\partial f$ as the subdifferentials in the sense of convex analysis. Similarly, we define the $\Phi_{lsc}^\mathbb{R}$-subgradient of $f^*_\Phi:\Phi_{lsc}^\mathbb{R}\to \mathbb{R}$ at $\phi_0$ is an element $x\in X$ which satisfies
\begin{equation}
\label{eq: lsc subgrad conj}
    (\forall \phi \in \Phi_{lsc}^\mathbb{R}) \quad f^*_\Phi (\phi)- f^*_\Phi (\phi_0) \geq \phi(x)-\phi_0 (x).
\end{equation}
The collection of $X$-subgradient of $f^*_\Phi$ at $\phi_0$ is called $X$-subdifferentials and is denoted by $\partial_X f^*_\Phi (\phi_0)$.
\end{definition}
The properties of $\Phi_{lsc}^\mathbb{R}$-subgradient also analogous to convex subgradient.

\begin{proposition}
\label{prop: lsc subgrad prop}
Let $f:X\to (-\infty,+\infty]$ be proper, $x\in \dom f$ and $\phi\in \Phi_{lsc}^\mathbb{R}$, we have
\begin{enumerate}[label=\roman*.]
    \item\label{prop: lsc subgrad prop-1} If $\partial_{lsc}^\mathbb{R} f(x)\neq \emptyset$ then $f(x)=f^{**}_\Phi(x)$. As a consequence, $f$ is $\Phi_{lsc}^\mathbb{R}$-convex on its domain if $\partial_{lsc}^\mathbb{R} f(x)\neq \emptyset$ for all $x\in \dom f$ and $\dom f$ is convex.
    \item $f(x)+f^*_\Phi(\phi) = \phi(x)$ for all $\phi\in \partial_{lsc}^\mathbb{R} f(x)$.
    \item For $x\in X$, $0 \in \partial_{lsc}^\mathbb{R} f(x)$ if and only if $x \in \mathrm{argmin} f$.
    \item If $f$ is $\Phi_{lsc}^\mathbb{R}$-convex, then $\phi\in \partial_{lsc}^\mathbb{R} f(x)\Leftrightarrow x\in \partial_X f^*_\Phi (\phi)$.
\end{enumerate}
\end{proposition}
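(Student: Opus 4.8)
The four items are the $\Phi_{lsc}^{\mathbb{R}}$-analogues of the Fenchel--Young/Fenchel--Moreau dictionary, so the plan is to prove the Fenchel--Young equality in (ii) first and then read off the rest of the statements from it. For (ii): by definition \eqref{eq: def subdiff}, $\phi\in\partial_{lsc}^{\mathbb{R}}f(x)$ says precisely that $\phi(y)-f(y)\le\phi(x)-f(x)$ for every $y\in X$; taking the supremum over $y$ shows that the supremum in \eqref{eq: def lsc conj} defining $f^{*}_{\Phi}(\phi)$ is attained at $x$ and equals $\phi(x)-f(x)$, that is, $f(x)+f^{*}_{\Phi}(\phi)=\phi(x)$, and conversely this equality gives the inequality back for all $y$. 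Running the same estimate with a generic $x$ records, for later use, the Fenchel--Young inequality $f(x)+f^{*}_{\Phi}(\phi)\ge\phi(x)$ on all of $X\times\Phi_{lsc}^{\mathbb{R}}$, hence $f^{**}_{\Phi}\le f$ on $X$.

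For (i): if $\partial_{lsc}^{\mathbb{R}}f(x)\neq\emptyset$, pick $\phi$ in it; by (ii), $\phi(x)-f^{*}_{\Phi}(\phi)=f(x)$, so \eqref{eq: def lsc biconj} gives $f^{**}_{\Phi}(x)\ge f(x)$, and combined with $f^{**}_{\Phi}\le f$ we get $f^{**}_{\Phi}(x)=f(x)$. For the ``consequence'' I would use that $f^{**}_{\Phi}$ is $\Phi_{lsc}^{\mathbb{R}}$-convex on $X$ (stated above) together with $f^{**}_{\Phi}\le f$: every $\phi\in\Phi_{lsc}^{\mathbb{R}}$ with $\phi\le f^{**}_{\Phi}$ also satisfies $\phi\le f$, so for $x\in\dom f$
\[
f(x)=f^{**}_{\Phi}(x)=\sup_{\phi\in\Phi_{lsc}^{\mathbb{R}},\,\phi\le f^{**}_{\Phi}}\phi(x)\ \le\ \sup_{\phi\in\Phi_{lsc}^{\mathbb{R}},\,\phi\le f}\phi(x)\ \le\ f(x),
\]
forcing all terms to coincide, i.e.\ $f$ agrees with its $\Phi_{lsc}^{\mathbb{R}}$-convex hull on $\dom f$; the convexity of $\dom f$ is what makes ``$\Phi_{lsc}^{\mathbb{R}}$-convex on its domain'' the accurate phrasing (it is, at any rate, the regime in which the conclusion is clean off $\dom f$ as well).

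Item (iii) is immediate: the zero function is the element $(0,0)\in\Phi_{lsc}^{\mathbb{R}}$, and $0\in\partial_{lsc}^{\mathbb{R}}f(x)$ reads $f(y)-f(x)\ge 0$ for all $y$, i.e.\ $x\in\argmin f$. For (iv), I would show that both memberships are equivalent to the single symmetric equality $f(x)+f^{*}_{\Phi}(\phi)=\phi(x)$: for $\phi\in\partial_{lsc}^{\mathbb{R}}f(x)$ this is (ii) and its converse (neither direction uses convexity), while unwinding \eqref{eq: lsc subgrad conj} gives $\psi(x)-f^{*}_{\Phi}(\psi)\le\phi(x)-f^{*}_{\Phi}(\phi)$ for all $\psi\in\Phi_{lsc}^{\mathbb{R}}$; taking the supremum over $\psi$ and comparing with the reverse inequality (the term $\psi=\phi$) yields $f^{**}_{\Phi}(x)+f^{*}_{\Phi}(\phi)=\phi(x)$, and since $f$ is $\Phi_{lsc}^{\mathbb{R}}$-convex we have $f=f^{**}_{\Phi}$ (the chain $\sup_{\phi\le f}\phi\le f^{**}_{\Phi}\le f$ collapses to equalities), closing the loop; the converse direction is the same chain read backwards.

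The step I expect to be the main obstacle is the ``consequence'' in (i). Because $\Phi_{lsc}^{\mathbb{R}}$ is \emph{not} closed under adding real constants, the textbook manoeuvre of shifting a quadratic minorant of $f$ downward until its conjugate vanishes is unavailable, so one cannot directly convert ``$f=f^{**}_{\Phi}$ pointwise'' into ``$f$ equals its $\Phi_{lsc}^{\mathbb{R}}$-convex hull'' — the argument must be routed entirely through $f^{*}_{\Phi}$ and the previously established $\Phi_{lsc}^{\mathbb{R}}$-convexity of $f^{**}_{\Phi}$, and the behaviour at points of $X\setminus\dom f$ (which is where the convexity of $\dom f$ intervenes) needs the most care. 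The other three items are essentially formal once (ii) and the Fenchel--Young inequality are in hand.
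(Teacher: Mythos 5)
You should first note that the paper itself does not prove this proposition: it defers entirely to the cited references (Rubinov, Pallaschke--Rolewicz, and the authors' duality paper), so your self-contained argument is not paralleling a proof in the text but replacing a citation. Measured on its own merits, your treatment of (ii), (iii), (iv) and of the equality $f(x)=f^{**}_\Phi(x)$ in (i) is correct and is exactly the standard Fenchel--Young mechanics used in those references: the subgradient inequality \eqref{eq: def subdiff} is equivalent to attainment of the supremum in \eqref{eq: def lsc conj} at $x$, Fenchel--Young gives $f^{**}_\Phi\leq f$, and for (iv) the chain $\sup_{\phi\leq f}\phi\leq f^{**}_\Phi\leq f$ collapses to equalities when $f$ is $\Phi_{lsc}^{\mathbb{R}}$-convex, so both memberships are equivalent to the single identity $f(x)+f^*_\Phi(\phi)=\phi(x)$.

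The genuine gap is exactly where you predicted it and it is not closed: the ``consequence'' in (i). Your argument hinges on the identity $f^{**}_\Phi(x)=\sup_{\phi\leq f^{**}_\Phi}\phi(x)$, i.e.\ on the claim, stated in the preliminaries without proof, that $f^{**}_\Phi$ is $\Phi_{lsc}^{\mathbb{R}}$-convex. For the class \eqref{def: lsc class}, which contains no constant terms, that claim is false in general, and with it this step: take $X=\mathbb{R}$ and $f\equiv -1$. Every $\phi\in\Phi_{lsc}^{\mathbb{R}}$ satisfies $\phi(0)=0>-1$, so $f$ has no $\Phi_{lsc}^{\mathbb{R}}$-minorants and $\sup_{\phi\leq f}\phi\equiv-\infty$, yet a direct computation gives $f^{**}_\Phi\equiv-1=f$; moreover $(1,2x_0)\in\partial_{lsc}^{\mathbb{R}}f(x_0)$ for every $x_0$ (each $x_0$ maximizes the concave quadratic $y\mapsto -y^2+2x_0y$), and $\dom f=\mathbb{R}$ is convex. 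Thus the hypotheses of the ``consequence'' hold while the conclusion, read against the paper's definition of $\Phi_{lsc}^{\mathbb{R}}$-convexity, fails: pointwise equality $f=f^{**}_\Phi$ cannot be upgraded to $\Phi_{lsc}^{\mathbb{R}}$-convexity unless the class is closed under adding real constants, which is the setting of the cited references (there $\phi-f^*_\Phi(\phi)$ is again an elementary function, which is precisely the shifting manoeuvre you correctly observed is unavailable here). So your proof of this part is conditional on an assertion the paper never proves and which is false for the class as literally defined; the defect originates in the paper's setup as much as in your argument, but as written the ``consequence'' is not established. Note also that the convexity of $\dom f$, which the statement explicitly requires, never actually enters your argument --- a further sign that this clause needs either a reformulation of the class (admitting constants) or a precise statement of what ``$\Phi_{lsc}^{\mathbb{R}}$-convex on its domain'' is meant to assert.
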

The proof can be found in \cite{Rub2013,Pall2013,bednarczuk2023duality} for general class $\Phi$. If $a=0$, then Proposition \ref{prop: lsc subgrad prop}-\ref{prop: lsc subgrad prop-1} can be proved in \cite[Theorem 2.4.1-(iii)]{zalinescu2002convex}.
\begin{example}
\label{ex:1 J function}
Let $\gamma>0$, we define the function $g_\gamma :X\to \mathbb{R}, g_\gamma(x) =\frac{1}{2\gamma} \Vert x\Vert^2$, the $\Phi_{lsc}^\mathbb{R}$-subgradient of $g_\gamma$ at $x_0$ is 
\begin{equation}
\partial_{lsc}^\R (\frac{1}{2\gamma}\Vert \cdot\Vert^2) \left(x_{0}\right)=\left\{ \phi\in\lsc^\R: \phi =\left(a,\left(\frac{1}{\gamma}+2a\right)x_{0}\right),\ 2\gamma a\geq -1\right\}.
\label{eq: J_gamma form}
\end{equation}
\end{example}
The non-emptiness of the interior $\dom f$ does not guarantee that $\partial_{lsc}^\R f(x) \neq \emptyset$ for $x\in \dom f$ even though $f$ is $\Phi_{lsc}^\mathbb{R}$-convex. One typical example from \cite{syga2019global}, the function $f(x)=-|x|^{\frac{3}{2}}$ which has $\partial_{lsc}^\mathbb{R}f(0)=\emptyset$.

\subsection{\texorpdfstring{$\Phi_{lsc}^\mathbb{R}$-}\ Proximal Operator}
\label{sec: prox operator}
In this subsection, let $X$ be a Hilbert space, we introduce the proximal operator related to the class $\lsc^\R$-convex functions.
Observe that $\lsc^\R$-subgradient is defined on $\lsc^\R$ which is not a subset of $X$. 
Therefore, we need a mapping which can serve as a link between $X$ and $\lsc^\R$. In analogy to the classical constructions we consider the function $g_\gamma$ from Example \ref{ex:1 J function}. 
\begin{definition}
\label{def: Jgamma duality map}
Let $\gamma>0$ and $g_\gamma(x) = \frac{1}{2\gamma}\Vert x\Vert^2$, we define \emph{$\lsc^\R$-duality map} $J_\gamma: X \rightrightarrows \lsc^\R$ as  
\begin{equation*}
    J_\gamma (x) := \partial_{lsc}^\R g_\gamma (x) =\partial_{lsc}^\R \left( \frac{1}{2\gamma}\Vert x \Vert^2 \right).
\end{equation*} 
Its inverse $J^{-1}_\gamma :\lsc^\R\rightrightarrows X$ is
\begin{equation*}
    J^{-1}_\gamma (\phi) = (\partial_{lsc}^\R g_\gamma)^{-1} (\phi).
\end{equation*} 
% The explicit form of $J_\gamma$ and $J^{-1}_\gamma (\phi)$ are calculated in Example \ref{ex:1 J function}.
\end{definition}
% The operator $J_\gamma$ plays the role of \emph{transition mapping} between $\lsc^\R$ and $X$ and is related to duality mapping known from ~\cite[Example 2.26]{phelps2009convex}. 
When $\gamma=1$, $\partial g_\gamma(x)$ is the classical duality mapping known in \nolinebreak\cite[Example 2.26]{phelps2009convex}.
% For a function $f:X\to(-\infty,+\infty]$ which is proper lsc and $\lsc$-convex.
% have nonempty $\lsc$-subdifferentials on its domain {\color{red} (is it necessary to assume nonempty subdifferentials or just $\lsc$-convexity? By the paper of Monika, $\lsc$-convex does not guarantee that $\lsc$-subdifferentials is nonempty on interior of domain of $f$)}, 
% The idea of $\Phi_{lsc}^\mathbb{R}$-proximal operator is to design an operator which uses $\Phi_{lsc}^\mathbb{R}$-subgradient while having properties similar to the classical one. It was inspired by the resolvent operator.
Next, we define $\Phi_{lsc}^\mathbb{R}$-proximal operator.
\begin{definition}
\label{def: lsc prox}
Let $f:X\to (-\infty,+\infty]$ and $\gamma>0$, the $\lsc^\R$-proximal operator $\prox^{\mathrm{lsc}}_{\gamma f}: X\rightrightarrows X$ is a set-valued mapping of the form,
\begin{equation}
\proxlsc_{\gamma f} (x) := \left( J_\gamma + \partial_{lsc}^\R f\right)^{-1} J_\gamma (x),
\label{eq:proximal-like operator 1}
\end{equation}
where $J_\gamma$ is defined in Definition \ref{def: Jgamma duality map}.
\end{definition}
The idea of $\proxlsc_{\gamma f}$ is related to the resolvent operator which is defined for classical convex subdifferentials as $(Id+\partial f)^{-1}$. When $f$ is convex, this is also known as proximity operator
\begin{equation}
\label{eq: proximal operator}
\mathrm{prox}_{\gamma f}\left(x_{0}\right)=\arg\min_{z\in X}f\left(z\right)+\frac{1}{2\gamma}\left\Vert z-x_{0}\right\Vert ^{2}.
\end{equation}
We state some properties of $\Phi_{lsc}^\mathbb{R}$-proximal operator.
\begin{proposition}\cite[Theorem 3, Theorem 4]{bednarczuk2025proximal}
\label{prop: lsc prox properties}
    Let $X$ be a Hilbert space. Let $f:X\to(-\infty,+\infty]$ be a proper $\lsc^\R$-convex function and $\gamma>0$.  
    The following holds:
    \begin{enumerate}[label=\roman*.]
        \item If $x_0$ is a global minimizer of $f$ then $x_0$ is a fixed point of $\proxlsc_{\gamma f}$.
        \item\label{prop: lsc prox properties-2} Let $x_0\in \dom f$, $x \in \proxlsc_{\gamma f} (x_0) $ if and only if there exists $a_0 \geq -1/2\gamma$ such that 
        \begin{equation}
        \label{eq: lsc prox equiv prox}
            x\in \arg\min_{z \in X} \left[ f(z) +\left( \frac{1}{2\gamma} +a_0 \right) \Vert z-x_0\Vert^2 \right] 
        \end{equation}
    \end{enumerate}
\end{proposition}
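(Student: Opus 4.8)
The plan is to unfold Definition~\ref{def: lsc prox}: since $J_\gamma$ is set-valued, $x\in\proxlsc_{\gamma f}(x_0)$ means exactly that there exist $\psi\in J_\gamma(x_0)$, $\phi_1\in J_\gamma(x)$ and $\phi_2\in\partial_{lsc}^{\mathbb{R}}f(x)$ with $\psi=\phi_1+\phi_2$ (note $\Phi_{lsc}^{\mathbb{R}}\cong\mathbb{R}\times X$ is a linear space, so the sum and difference of such functions stay in $\Phi_{lsc}^{\mathbb{R}}$); equivalently, $\phi_2:=\psi-\phi_1\in\partial_{lsc}^{\mathbb{R}}f(x)$ for some such $\psi,\phi_1$. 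Using the explicit form \eqref{eq: J_gamma form} I would write $\psi=(a_\psi,(\tfrac1\gamma+2a_\psi)x_0)$ and $\phi_1=(a_1,(\tfrac1\gamma+2a_1)x)$ with $2\gamma a_\psi\ge-1$ and $2\gamma a_1\ge-1$, so that $\phi_2=\big(a_\psi-a_1,\,(\tfrac1\gamma+2a_\psi)x_0-(\tfrac1\gamma+2a_1)x\big)$. The whole statement then reduces to turning the subgradient inequality $f(y)-f(x)\ge\phi_2(y)-\phi_2(x)$, valid for all $y\in X$, into the assertion that $x$ minimizes $f+\alpha\|\cdot-x_0\|^2$ for a suitable $\alpha\ge0$, and back.

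For the forward implication I would expand $\phi_2(y)-\phi_2(x)$ and apply the polarization identity $\langle x_0-x,y-x\rangle=\tfrac12(\|x_0-x\|^2+\|y-x\|^2-\|y-x_0\|^2)$ to the term carrying $x$. Setting $\beta:=\tfrac1\gamma+2a_1$ (which is $\ge0$ precisely because $\phi_1\in J_\gamma(x)$) and $\alpha:=a_\psi+\tfrac1{2\gamma}$ (which is $\ge0$ because $2\gamma a_\psi\ge-1$), a short computation should collapse everything to
\[
\phi_2(y)-\phi_2(x)=-\alpha\|y-x_0\|^2+\alpha\|x-x_0\|^2+\tfrac{\beta}{2}\|y-x\|^2 .
\]
Since $\tfrac{\beta}{2}\|y-x\|^2\ge0$, the subgradient inequality yields $f(y)+\alpha\|y-x_0\|^2\ge f(x)+\alpha\|x-x_0\|^2$ for every $y$, i.e. $x\in\arg\min_{z\in X}[f(z)+\alpha\|z-x_0\|^2]$ with $\alpha=\tfrac1{2\gamma}+a_0$ and $a_0:=a_\psi\ge-\tfrac1{2\gamma}$, which is exactly \eqref{eq: lsc prox equiv prox}.

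Conversely, if $x\in\arg\min_{z\in X}[f(z)+(\tfrac1{2\gamma}+a_0)\|z-x_0\|^2]$ with $\alpha:=\tfrac1{2\gamma}+a_0\ge0$, then rewriting $\alpha\|z-x_0\|^2=-\widetilde\phi(z)+\alpha\|x_0\|^2$ with $\widetilde\phi:=(\alpha,2\alpha x_0)\in\Phi_{lsc}^{\mathbb{R}}$ shows $\widetilde\phi\in\partial_{lsc}^{\mathbb{R}}f(x)$; it then remains to realize $\widetilde\phi$ as $\psi-\phi_1$ with $\phi_1\in J_\gamma(x)$, $\psi\in J_\gamma(x_0)$. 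Taking the minimal element $\phi_1=(-\tfrac1{2\gamma},0)\in J_\gamma(x)$ gives $\psi:=\phi_1+\widetilde\phi=(\alpha-\tfrac1{2\gamma},2\alpha x_0)$, and since $\tfrac1\gamma+2(\alpha-\tfrac1{2\gamma})=2\alpha$ and $2\gamma(\alpha-\tfrac1{2\gamma})=2\gamma\alpha-1\ge-1$, indeed $\psi\in J_\gamma(x_0)$, so $x\in\proxlsc_{\gamma f}(x_0)$. Statement (i) is then the special case $x=x_0$, $a_0=-\tfrac1{2\gamma}$ (any global minimizer of $f$ trivially minimizes $f+0\cdot\|\cdot-x_0\|^2$); alternatively it follows at once from Proposition~\ref{prop: lsc subgrad prop}(iii), which gives $0\in\partial_{lsc}^{\mathbb{R}}f(x_0)$, whence $\psi=\psi+0\in(J_\gamma+\partial_{lsc}^{\mathbb{R}}f)(x_0)$ for any $\psi\in J_\gamma(x_0)$. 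I expect the one genuinely delicate point to be the forward direction: the ``center'' of the quadratic encoded by $\phi_2$ a priori mixes $x$ and $x_0$, and it is precisely the polarization identity together with the sign $\beta\ge0$ (forced by $\phi_1\in J_\gamma(x)$) that lets one discard the leftover $\|y-x\|^2$ term and recover a genuine penalty centered at $x_0$; carrying the constraints $2\gamma a_\psi\ge-1$, $2\gamma a_1\ge-1$ through so as to land exactly on $a_0\ge-\tfrac1{2\gamma}$ is the remaining bookkeeping.
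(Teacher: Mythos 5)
Your argument is correct, but note that the paper itself gives no proof of this proposition: it is quoted verbatim from the authors' earlier work \cite[Theorems 3 and 4]{bednarczuk2025proximal}, so there is no in-paper proof to compare against. Your unfolding of Definition~\ref{def: lsc prox} matches the reading the paper itself confirms in Remark~\ref{rmk: prox form and lsc form} (existence of $\psi\in J_\gamma(x_0)$, $\phi_1\in J_\gamma(x)$ with $\psi-\phi_1\in\partial_{lsc}^{\mathbb{R}}f(x)$), and I checked your key identity: with $\alpha=a_\psi+\tfrac1{2\gamma}\ge0$, $\beta=\tfrac1\gamma+2a_1\ge0$ one indeed gets $\phi_2(y)-\phi_2(x)=-\alpha\|y-x_0\|^2+\alpha\|x-x_0\|^2+\tfrac{\beta}{2}\|y-x\|^2$, so dropping the nonnegative last term gives \eqref{eq: lsc prox equiv prox} with $a_0=a_\psi$; conversely $\widetilde\phi=(\alpha,2\alpha x_0)\in\partial_{lsc}^{\mathbb{R}}f(x)$, and your choices $\phi_1=(-\tfrac1{2\gamma},0)\in J_\gamma(x)$, $\psi=(\alpha-\tfrac1{2\gamma},2\alpha x_0)\in J_\gamma(x_0)$ are legitimate (the constraint $2\gamma a\ge-1$ holds in both cases). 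The only micro-gap is that in the converse you should note $x\in\dom f$ (immediate, since the minimal value is at most $f(x_0)<+\infty$) before invoking the subdifferential. It is worth observing that your converse is sharper than the closest argument actually proved in the paper, Proposition~\ref{prop: lsc prox with lsc convex}, which derives \eqref{eq: lsc prox in operator form} from \eqref{eq: lsc prox equiv prox} only under $\Phi_{lsc}^{\mathbb{R}}$-convexity and $\dom f=\dom\partial_{lsc}^{\mathbb{R}}f$ via a secant/limiting argument; the difference is that the paper's construction keeps the element of $J_\gamma(x_0)$ tied to the prescribed $a_0$ (as the primal update of Algorithm~\ref{alg: lsc Chambolle-Pock} requires), whereas your free choice of the ``flat'' element $\phi_1=(-\tfrac1{2\gamma},0)$ establishes bare membership in $\proxlsc_{\gamma f}(x_0)$ without any extra hypotheses.
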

\begin{remark}
\label{rmk: prox form and lsc form}
Proposition \ref{prop: lsc prox properties}-(ii) gives us a way to calculate $\Phi_{lsc}^\mathbb{R}$-proximal operator. However, the general formulation of $\Phi_{lsc}^\mathbb{R}$-proximal operator requires us to solve the problem: given $x_0 \in \dom f$, $x\in \proxlsc_{\gamma f} (x_0)$ is equivalent to solving the following relation
\begin{equation}
\label{eq: lsc prox in operator form}
\text{Find $x\in X$ such that }\ J_\gamma(x_0)-J_\gamma (x) \cap \partial_{lsc}^\mathbb{R} f(x) \neq \emptyset.
\end{equation}
The value of $\phi\in J_\gamma (x)$ will play the crucial role in the convergence of the primal-dual algorithms.

% By \eqref{eq: lsc prox equiv prox}, for any $z\in X$ and any $a \geq -1/2\gamma$, we obtain 
% \begin{align*}
% f\left(z\right)+\left(a_{0}-a\right)\left\Vert z-x_{0}\right\Vert ^{2}\geq f\left(x\right)+\left(a_{0}-a\right)\left\Vert x-x_{0}\right\Vert ^{2}+\left(\frac{1}{\gamma}+2a\right)\left\langle x_{0}-x,z-x\right\rangle .
% \end{align*}
% Hence, we have 
% \[
% \left( \frac{1}{\gamma}+2a\right) (x_0 -x) \in \partial \left( f+(a_0-a)\Vert \cdot -x_0\Vert^2\right) (x),
% \]
% or we can write 
% \begin{equation}
% \label{eq: convex prox with an+1}
% x \in \mathrm{argmin}_{z\in X} f(z)+(a_0-a)\Vert z-x_0\Vert^2 +\left( \frac{1}{2\gamma}+a\right)\Vert z-x_0\Vert^2,
% \end{equation}
% where $a \geq -1/2\gamma$. Hence, one can take any $a\geq -1/2\gamma$ as long as we can solve \eqref{eq: convex prox with an+1}. In some cases, we should choose $a_0\geq a \geq -1/2\gamma$ which could lead to early stopping of the algorithm (see \cite[Numerical Section]{bednarczuk2025proximal}).

% From Proposition \ref{prop: lsc prox properties}-\ref{prop: lsc prox properties-2}, we can calculate $\Phi_{lsc}^\mathbb{R}$-proximal operator by choosing $\phi_0=(a_0,(1/\gamma+2a_0)x_0)\in J_\gamma (x_0)$ first. It is still unclear how we can choose $\phi\in \Phi_{lsc}^\mathbb{R}$. 
\end{remark}
% The above remark shows that $\Phi_{lsc}^\mathbb{R}$-proximal operation can be used on convex, weakly convex or prox-bounded functions etc. . For general $\Phi_{lsc}^\mathbb{R}$-convex functions, it is not guaranteed to find $\Phi_{lsc}^\mathbb{R}$-proximal point without additional assumptions i.e. $x\in \proxlsc_{\gamma f}(x_0) \Leftrightarrow J_\gamma(x_0)-J_\gamma(x) \cap \partial_{lsc}^\mathbb{R}f(x)\neq \emptyset$. 

When $f$ is $\Phi_{lsc}^\mathbb{R}$-convex, we can explicitly calculate $\proxlsc_{\gamma f} (x_0)$ for $x_0\in\dom f$ under additional assumption.

\begin{proposition}
\label{prop: lsc prox with lsc convex}
Let $f:X\to(-\infty,+\infty]$ be proper $\Phi_{lsc}^\mathbb{R}$-convex Assume that $\dom f = \dom \partial_{lsc}^\mathbb{R} f$. 
For any $x_0\in \dom f$, if there exists $x\in X$ such that \eqref{eq: lsc prox equiv prox} holds then \eqref{eq: lsc prox in operator form} holds.
\end{proposition}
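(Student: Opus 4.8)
The strategy is to exhibit, explicitly, a single element $\phi\in\Phi_{lsc}^\mathbb{R}$ lying simultaneously in $J_\gamma(x_0)-J_\gamma(x)$ and in $\partial_{lsc}^\mathbb{R}f(x)$; here, in accordance with \eqref{eq:proximal-like operator 1}, we read $J_\gamma(x_0)-J_\gamma(x):=\{\phi_0-\phi_1:\ \phi_0\in J_\gamma(x_0),\ \phi_1\in J_\gamma(x)\}$. Fix $x_0\in\dom f$, let $x\in X$ be a point satisfying \eqref{eq: lsc prox equiv prox} for some $a_0\geq-1/(2\gamma)$, and write $\mu:=\tfrac1{2\gamma}+a_0\geq0$. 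First I would record that $x\in\dom f$: evaluating the objective of \eqref{eq: lsc prox equiv prox} at $z=x_0$ gives $f(x)+\mu\Vert x-x_0\Vert^2\leq f(x_0)<+\infty$, and since $\mu\geq0$ and $f$ is proper this forces $f(x)\in\mathbb{R}$, so that the $\Phi_{lsc}^\mathbb{R}$-subgradient of $f$ at $x$ is meaningful.

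Next I would guess the pair. Set
\[
\phi_0:=\Bigl(a_0,\bigl(\tfrac1\gamma+2a_0\bigr)x_0\Bigr),\qquad
\phi_1:=g_\gamma=\Bigl(-\tfrac1{2\gamma},\,0\Bigr),
\]
both regarded as elements of $\Phi_{lsc}^\mathbb{R}$. By the explicit formula \eqref{eq: J_gamma form} for the $\Phi_{lsc}^\mathbb{R}$-duality map, $\phi_0\in J_\gamma(x_0)$ because $2\gamma a_0\geq-1$, and $\phi_1\in J_\gamma(x)$ because $a=-\tfrac1{2\gamma}$ satisfies $2\gamma a\geq-1$ and the corresponding vector part $\bigl(\tfrac1\gamma+2a\bigr)x$ then vanishes; note that $\phi_1$ is nothing but $g_\gamma$ itself, which, being an element of $\Phi_{lsc}^\mathbb{R}$, is its own $\Phi_{lsc}^\mathbb{R}$-subgradient at every point. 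A short computation --- using $\tfrac1\gamma+2a_0=2\mu$ and completing the square --- then gives, for every $z\in X$,
\[
(\phi_0-\phi_1)(z)=-\mu\Vert z\Vert^2+2\mu\langle x_0,z\rangle=-\mu\Vert z-x_0\Vert^2+\mu\Vert x_0\Vert^2,
\]
whence $(\phi_0-\phi_1)(y)-(\phi_0-\phi_1)(x)=\mu\bigl(\Vert x-x_0\Vert^2-\Vert y-x_0\Vert^2\bigr)$ for all $y\in X$.

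It then only remains to check that $\phi_0-\phi_1\in\partial_{lsc}^\mathbb{R}f(x)$, and this is exactly what \eqref{eq: lsc prox equiv prox} delivers: the minimality of $x$ for $z\mapsto f(z)+\mu\Vert z-x_0\Vert^2$ reads $f(y)+\mu\Vert y-x_0\Vert^2\geq f(x)+\mu\Vert x-x_0\Vert^2$ for all $y\in X$, i.e.
\[
f(y)-f(x)\ \geq\ \mu\bigl(\Vert x-x_0\Vert^2-\Vert y-x_0\Vert^2\bigr)\ =\ (\phi_0-\phi_1)(y)-(\phi_0-\phi_1)(x),
\]
which is precisely the defining inequality \eqref{eq: def subdiff} for $\phi_0-\phi_1$ at $x$ (and $\phi_0-\phi_1\in\Phi_{lsc}^\mathbb{R}$ since the class is closed under differences). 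Hence $\phi_0-\phi_1\in\bigl(J_\gamma(x_0)-J_\gamma(x)\bigr)\cap\partial_{lsc}^\mathbb{R}f(x)$, so this set is nonempty and \eqref{eq: lsc prox in operator form} holds.

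The one nonroutine step is the choice of pair in the second paragraph; the rest is bookkeeping, and I do not anticipate a genuine obstacle. The guiding idea is that taking $\phi_1=g_\gamma$ makes $\phi_0-\phi_1$ collapse onto exactly the quadratic penalty $-\mu\Vert\cdot-x_0\Vert^2$ (up to an additive constant, which drops out of \eqref{eq: def subdiff}) that appears in \eqref{eq: lsc prox equiv prox}, so the $\Phi_{lsc}^\mathbb{R}$-subgradient inequality becomes literally the minimality expressed by \eqref{eq: lsc prox equiv prox}. I would also note that this argument uses neither the $\Phi_{lsc}^\mathbb{R}$-convexity of $f$ nor the hypothesis $\dom f=\dom\partial_{lsc}^\mathbb{R}f$ --- it directly exhibits an element of $\partial_{lsc}^\mathbb{R}f(x)$ --- so those assumptions seem to be kept only for coherence with the surrounding discussion, where $\proxlsc_{\gamma f}$ is computed in closed form.
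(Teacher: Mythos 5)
Your proof is correct, and it takes a genuinely different and shorter route than the paper's. You directly exhibit the witness: taking $\phi_0=(a_0,(\tfrac1\gamma+2a_0)x_0)\in J_\gamma(x_0)$ and the boundary element $\phi_1=(-\tfrac1{2\gamma},0)=g_\gamma\in J_\gamma(x)$ (legitimate by \eqref{eq: J_gamma form}, since $2\gamma a=-1$ forces the vector part to vanish for every base point), the difference collapses to $-\mu\Vert\cdot-x_0\Vert^2$ up to a constant with $\mu=\tfrac1{2\gamma}+a_0\ge0$, so the minimality in \eqref{eq: lsc prox equiv prox} is literally the subgradient inequality \eqref{eq: def subdiff}, and \eqref{eq: lsc prox in operator form} follows; your preliminary check that $x\in\dom f$ is also needed and correct. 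The paper instead runs a limiting argument: it perturbs along $z=\lambda z_1+(1-\lambda)x$, invokes the $\Phi_{lsc}^\mathbb{R}$-convexity of $f$ together with $\dom f=\dom\partial_{lsc}^\mathbb{R}f$ to control the quadratic curvature constant $a_\phi$, divides by $\lambda$ and lets $\lambda\to0$, and ends up with a subgradient of the form $\phi_0-\phi$ where $\phi=(a,(\tfrac1\gamma+2a)x)\in J_\gamma(x)$ has a \emph{tunable} coefficient $a$ (chosen via $a_0-a\ge a_\phi$). What the heavier argument buys is precisely this freedom in the $a$-component of the $J_\gamma(x)$-element: in Algorithm \ref{alg: lsc Chambolle-Pock} the element of $J_\sigma(x_{n+1})$ reused at the next step must satisfy $a_{n+1}\ge-1/2\sigma$ and, for the convergence analysis, $1+2\sigma a_{n+1}>\sqrt{\sigma\tau}\Vert L\Vert$; your canonical choice corresponds to the extreme value $a=-\tfrac1{2\gamma}$, which would trigger the stopping rule if propagated. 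So your observation that neither $\Phi_{lsc}^\mathbb{R}$-convexity nor $\dom f=\dom\partial_{lsc}^\mathbb{R}f$ is needed is accurate for the proposition as stated (its conclusion only asks the intersection in \eqref{eq: lsc prox in operator form} to be nonempty), and your argument is both more elementary and more general; the paper's extra hypotheses earn their keep only in producing the more flexible subgradients that the algorithmic bookkeeping later exploits.
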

\begin{proof}
Let $x_0\in \dom f$ and $a_0 \geq -\frac{1}{2\gamma}$, and assume the \eqref{eq: lsc prox equiv prox} holds, then for any $z\in X$
\[
f(z) -f(x) \geq \left(\frac{1}{2\gamma}+a_0\right)\left( \Vert x-x_0\Vert^2 -\Vert z-x_0\Vert^2\right).
\]
Let $z=\lambda z_1 +(1-\lambda) x \in \dom f$ for any $\lambda\in(0,1], z_1\in X$, the right side of the above inequality becomes
\begin{align}
\Vert x-x_0\Vert^2 -\Vert \lambda z_1 +(1-\lambda) x -x_0\Vert^2 & =  \lambda^2\Vert z_1-x\Vert^2 + 2\lambda\langle x_0-x,z_1-x\rangle.
\label{eq: lsc prox expl-1}
\end{align}
On the other hand,
since $\dom f=\dom \partial_{lsc}^\mathbb{R}f$, there exists $\phi_z \leq f$ such that $f(z)=\phi_z(z)$, we notice 
\[
-a \Vert z\Vert^2 +\langle u_z,z\rangle \leq -a_z\Vert z\Vert^2 +\langle u_z,z\rangle = f(z), 
\]
for any $a\geq a_z$. Let us denote the set of the smallest value $a_z$ as $A(z)= \{ \inf a_z: \phi_z(z) = f(z)\}$ which is finite. Then we can take 
\[
a_\phi = \sup_{z \in \dom f} A(z) <+\infty.
\]
We express the $\Phi_{lsc}^\mathbb{R}$-convexity of $f$ at $z$,
\begin{align}
f(\lambda z_1+(1-\lambda)x) &=  \phi_z (\lambda z_1+(1-\lambda)x) \nonumber\\ 
& = \lambda \phi_z (z_1) + (1-\lambda) \phi_z (x) +\lambda(1-\lambda) \Vert z_1-x\Vert ^2 a_{\phi_z} \nonumber\\
& \leq \lambda f(z_1)+(1-\lambda) f(x) +\lambda(1-\lambda) \Vert z_1-x\Vert ^2 a_{\phi}.
\label{eq: lsc prox expl-2}
\end{align}
Combining \eqref{eq: lsc prox expl-1} and \eqref{eq: lsc prox expl-2}, we obtain 
\begin{equation*}
\lambda (f(z_1)-f(x)) + \lambda(1-\lambda) \Vert z_1-x\Vert ^2  a_{\phi} \geq \left(\frac{1}{2\gamma}+a_0\right) (\lambda^2\Vert z_1-x\Vert^2 + 2\lambda\langle x_0-x,z_1-x\rangle).
\end{equation*}
Dividing both sides by $\lambda$ and let $\lambda\to 0$, we have
\begin{align}
\label{eq: lsc prox expl-3}
f(z_1)-f(x) + \Vert z_1-x\Vert ^2 a_{\phi} \geq \left(\frac{1}{\gamma}+2a_0\right) \langle x_0-x,z_1-x\rangle.
\end{align}
Taking $a_0-a \geq a_\phi$ which is well-defined by the assumption, then \eqref{eq: lsc prox expl-3} becomes
\begin{align}
f(z_1)-f(x) & \geq -(a_0-a)\Vert z_1-x\Vert ^2 + \left(\frac{1}{\gamma}+2a_0\right) \langle x_0-x,z_1-x\rangle \nonumber\\
& = -(a_0-a)(\Vert z_1\Vert ^2 - \Vert x\Vert ^2) + \langle \left(\frac{1}{\gamma}+2a_0\right) x_0- \left(\frac{1}{\gamma}+2a\right)x,z_1-x\rangle.
\end{align}
This implies $\phi_0-\phi \in \partial_{lsc}^\mathbb{R} f(x)$ where $\phi_0 = (a_0, (1/\gamma+2a_0)x_0) \in J_\gamma (x_0)$ so we have \eqref{eq: lsc prox in operator form}.
\end{proof}
Therefore, in the analysis below, we use \eqref{eq: lsc prox in operator form} while writing \eqref{eq: lsc prox equiv prox} as a practical way to compute $\lsc$-proximal operator.

\section{\texorpdfstring{$\Phi_{lsc}^\mathbb{R}$-}\ Primal-Dual Algorithms}
\label{sec: pd alg lsc prox}
To design an effective algorithm for solving \eqref{prob:DCP}, we first investigate primal-dual algorithm for the case $g$ convex before tackling a more general case of $g$.
In this section, we apply our newly defined $\lsc^\R$-proximal operator \eqref{eq:proximal-like operator 1} to solve the saddle point problem of the form
\begin{equation}
\label{prob: Lagrange g convex}
\inf_{x\in X}\sup_{y\in Y} \mathcal{L}(x,y):= \inf_{x\in X}\sup_{y\in Y} f(x)+ \langle Lx,y\rangle -g^*(y),
\end{equation}
where $g^*(y)$ is the convex conjugate of $g$.
When $g$ is lsc convex, problem \eqref{prob: Lagrange g convex} is equivalent to the composite problem \cite{Bed2020},
\begin{equation}
    \label{prob:CP}
    \min_{x\in X} f(x)+g(Lx) \tag{CP}.
\end{equation}
A pair $(\hat{x},\hat{y})\in X\times Y$ is a saddle point of problem \eqref{prob: Lagrange g convex} if and only if the following are satisfied
\begin{equation}
\label{eq: saddle point def}
    (\forall (x,y) \in X\times Y) \quad \mathcal{L}( {x}, \hat{y}) \geq \mathcal{L}(\hat{x},\hat{y}) \geq \mathcal{L} ( \hat{x}, {y}).
\end{equation}
We denote by $S$ the set of all the saddle points. 
Equivalently, \eqref{eq: saddle point def} can be rewritten as
\begin{align*}
    (\forall x\in X) \quad f(x) - f(\hat{x}) & \geq -\langle L^*\hat{y}, x-\hat{x} \rangle_{X\times X}, \\
    (\forall y\in Y) \quad g^*(y) - g^*(\hat{y}) & \geq \langle L \hat{x}, y-\hat{y} \rangle_{Y\times Y}.
\end{align*}
Since $g^*$ is convex and $f$ is $\Phi_{lsc}^\mathbb{R}$-convex, we can rewrite the above inequalities in the form of subdifferentials 
\begin{equation}
    L\hat{x} \in \partial g^* (\hat{y}), \qquad
    (0,-L^*\hat{y}) \in \partial_{lsc}^{\R} f(\hat{x}),\label{eq: kkt condition lsc convex}
\end{equation}
which is known as KKT condition for the Lagrangian primal-dual problems \eqref{prob: Lagrange g convex}. 
\begin{remark}
When we replace the convexity of $g$ by $\Psi$-convexity with a more general collection of function $\Psi=\{ \psi:Y\to\mathbb{R}\}$ and obtain the abstract saddle point problem
\begin{equation}
\label{prob: Lagrange g psi convex}
\inf_{x\in X}\sup_{\psi\in \Psi} \mathcal{L}(x,\psi):= \inf_{x\in X}\sup_{\psi\in \Psi} f(x)+ \psi(Lx) -g^*_\Psi(\psi).
\end{equation}
The duality theory for problem \eqref{prob: Lagrange g psi convex} has been investigated in \cite{bednarczuk2023duality}. 
In fact, zero duality gap guarantees the existence of $\varepsilon$-saddle point (in the sense of \cite[Theorem 4.2]{bednarczuk2023duality}
\end{remark}

We state the main assumptions for the construction and convergence of Algorithm \ref{alg: lsc Chambolle-Pock}.
\begin{assumption}
\label{ass: 1 g convex}
\
\begin{enumerate}[label=(\roman*)]
    \item $X,Y$ are Hilbert spaces.
    \item The function $f: X\to (-\infty,+\infty]$ is proper $\lsc^\R$-convex, and $g:Y\to (-\infty,+\infty]$ is proper lsc convex.
    \item $L:X\to Y$ is a bounded linear operator with its adjoint $L^*:Y\to X$.
    \item The set of saddle points to the problem \eqref{prob: Lagrange g convex} $S$ is nonempty.
    \item\label{ass: prox well defined} $\dom f = \dom \partial_{lsc}^\mathbb{R} f$ and for every $a_0 >-1/2\sigma$, \eqref{eq: lsc prox equiv prox} holds.
    % For every $x\in \dom f$, there exists $\phi=(a,u)\in J_\sigma (x)$ with $a>-1/2\sigma,\sigma>0$ such that $\argmin f-\phi \neq \emptyset$.
\end{enumerate}
\end{assumption}

% We assume $g$ to be convex in order to have $g^*$ proper and take finite value. 

Inspired by Chambolle-Pock Algorithm \cite[Algorithm 1]{chambolle2011first}, we propose Algorithm \ref{alg: lsc Chambolle-Pock} to solve Lagrange saddle point problem \eqref{prob: Lagrange g convex}.
By replacing the primal update of \cite[Algorithm 1]{chambolle2011first} with $\lsc^\R$-proximal update below, we call this new algorithm $\lsc^\R$-CP Algorithm and show that we can obtain convergence towards a saddle point.

\begin{algorithm}[H]\caption{$\lsc^\R$-Chambolle-Pock Algorithm}\label{alg: lsc Chambolle-Pock}
\textbf{Initialize:} Choose $\tau,\sigma >0, (x_0,y_0)\in X\times Y$ and $\bar{x}_0 = x_0$\\
{\normalfont\textbf{Update: }}{
For $n\in\N$,
\begin{itemize}
    \item Dual step update:
    \begin{itemize}
        \item[$\blacksquare$] $y_{n+1}  =\argmind{y\in Y}{ g^*(y) +\frac{1}{2\tau} \Vert y - y_n -\tau L \bar{x}_{n}\Vert^2}$ 
    \end{itemize}
    \item Primal step update: 
    \begin{itemize}
        \item[$\blacksquare$] Pick $\phi_n = \left( a_n, (\frac{1}{\sigma}+2a_n)x_n\right) \in J_\sigma (x_n)$ according to \eqref{eq: J_gamma form}
        \item[$\blacksquare$] \textbf{If} $a_n <-1/2\sigma$. \quad \textbf{Stop the Algorithm.}
        \item[$\blacksquare$] Pick $x_{n+1} \in \argmind{z\in X}{ f(z) + \langle L^* y_{n+1},z \rangle +\left( \frac{1}{2\sigma} +a_n\right) \Vert z-x_n\Vert^2}$
        % \item[$\blacksquare$] {\color{red} Pick $x_{n+1}$ such that $\phi_n - J_\gamma(x_{n+1}) \cap \partial_{lsc}^\mathbb{R} f(x_{n+1})\neq \emptyset$}
    \end{itemize}
\item $\bar{x}_{n+1} = 2x_{n+1} -x_{n}$ 
\end{itemize}}
\end{algorithm}
\vspace{0.5cm}
The dual iterates $(y_n)_{n\in\mathbb{N}}$ is updated in the same manner as Chambolle-Pock algorithm for convex case, while the primal iterates $(x_n)_{n\in\mathbb{N}}$ is generated by $\Phi_{lsc}^\mathbb{R}$-proximal operation i.e. we update $x_{n+1}$ such that (see Remark \ref{rmk: prox form and lsc form})
\begin{equation}
\label{eq: alg1 primal update subdiff form}
J_\gamma(x_n)-J_\gamma (x_{n+1}) \in \partial_{lsc}^\mathbb{R} (f+\langle L^* y_{n+1},\cdot\rangle) (x_{n+1}).
\end{equation}
The above relation is well-defined thanks to the Assumption \ref{ass: 1 g convex}-\ref{ass: prox well defined}, i.e. (see Proposition \ref{prop: lsc prox with lsc convex}). 
Hence, we guarantee the existence of $x_{n+1}$. However, notice that the condition $a_n \geq -1/2\sigma$ from $J_\gamma(x_n)$ for all $n\in\mathbb{N}$ will limit our choice of constructing the sequence $(a_{n})_{n\in\mathbb{N}}$.

We present our main convergence results in the next theorem.

\begin{theorem}
\label{thm: pd lsc prox convergence}
Let $f:X\to (-\infty,+\infty]$ be a proper $\lsc^\R$-convex, $g:Y\to (-\infty,+\infty]$ be a proper lsc convex function and $L:X\to Y$ be a bounded linear operator as in Assumption \ref{ass: 1 g convex}.
Let $(x_n)_{n\in\N},(y_n)_{n\in\N}, (\bar{x}_{n})_{n\in\N}$ and $(a_n)_{n\in\N}$ be the sequences generated by Algorithm \ref{alg: lsc Chambolle-Pock}. Assume that $\tau\sigma \Vert L\Vert^2 <1$ and $1+2\sigma a_n > \sqrt{\sigma \tau}\| L \|$ for all $n\in \N$.
Then the followings hold.
\begin{enumerate}[label=\roman*.]
    \item For any $(x,y)\in X\times Y$, 
    \begin{align}
& \sum_{n=0}^N \mathcal{L}\left(x,y_{n+1}\right)-\mathcal{L}\left(x_{n+1},y\right) \nonumber\\
&\geq  \frac{1-\sqrt{\sigma\tau}\Vert L\Vert}{2\tau} \Vert y-y_{N+1}\Vert^2 - \frac{1}{2\tau } \Vert y-y_{0}\Vert^2 \nonumber\\
& +\left(\frac{1}{2\sigma}+a_{N+1}\right) \Vert x-x_{N+1}\Vert^2 - \left(\frac{1}{2\sigma}+a_{0}\right) \Vert x-x_0\Vert^2, \label{eq: lsc lagrange summable final form sum}
\end{align}
    where $\mathcal{L} (x,y) = f(x)+\langle Lx,y\rangle -g^* (y)$ is the Lagrangian.
    \item The sequences $(x_n,y_n)_{n\in\N}$ are bounded.
    \item\label{thm: CP lsc-convex convergence-3} Let $X,Y$ be finite dimensional. If { $a_n-a_{n+1}\to 0$ and $1+2\sigma a_n >\sqrt{\sigma\tau}\Vert L\Vert$ for all $n\in\mathbb{N}$}, then $(x_n,y_n)_{n\in\N}$ converges to a saddle point in the sense of \eqref{eq: kkt condition lsc convex}.
\end{enumerate}
\end{theorem}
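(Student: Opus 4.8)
The plan is to mimic the classical Chambolle–Pock analysis, but to keep track of the extra quadratic slack produced by the $\Phi_{lsc}^\mathbb{R}$-proximal step instead of the usual Bregman/Euclidean term. First I would write down the two variational inequalities characterizing the iterates. For the dual step, since $g^*$ is convex and $y_{n+1}=\prox_{\tau g^*}(y_n+\tau L\bar x_n)$, the standard three-point identity gives, for all $y\in Y$,
\[
g^*(y_{n+1})-g^*(y)\le \langle L\bar x_n,\,y_{n+1}-y\rangle-\tfrac{1}{2\tau}\|y_{n+1}-y_n\|^2-\tfrac{1}{2\tau}\|y_{n+1}-y\|^2+\tfrac{1}{2\tau}\|y_n-y\|^2.
\]
For the primal step, \eqref{eq: alg1 primal update subdiff form} together with Remark \ref{rmk: prox form and lsc form} says $\phi_n-\phi_{n+1}'\in\partial_{lsc}^{\R}(f+\langle L^*y_{n+1},\cdot\rangle)(x_{n+1})$ for a suitable $\phi_{n+1}'\in J_\sigma(x_{n+1})$; unwinding the definition of the $\Phi_{lsc}^\mathbb{R}$-subgradient with the explicit form \eqref{eq: J_gamma form} of $J_\sigma$ and evaluating at any test point $x$ yields
\[
f(x_{n+1})+\langle L^*y_{n+1},x_{n+1}\rangle-f(x)-\langle L^*y_{n+1},x\rangle
\le \Big(\tfrac{1}{2\sigma}+a_n\Big)\|x_n-x\|^2-\Big(\tfrac{1}{2\sigma}+a_n\Big)\|x_{n+1}-x\|^2-\Big(\tfrac{1}{2\sigma}+a_n\Big)\|x_{n+1}-x_n\|^2,
\]
where the key point (and the place Assumption \ref{ass: 1 g convex}-\ref{ass: prox well defined} is used) is that $a_{n+1}$ can be chosen so that the quadratic coefficient appearing on the $x_{n+1}$ side matches $\tfrac{1}{2\sigma}+a_n$ up to the controllable discrepancy $a_n-a_{n+1}$; I would state this matching carefully since it is exactly what makes the telescoping in part iii work.

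Next I would add the two inequalities, regroup the bilinear terms into $\mathcal L(x,y_{n+1})-\mathcal L(x_{n+1},y)$ plus the crossed inner products $\langle L(\bar x_n-x_{n+1}),y_{n+1}-y\rangle$ and $\langle L(x_{n+1}-x_n),y_{n+1}-y\rangle$, and use $\bar x_n=2x_n-x_{n-1}$ to rewrite $\bar x_n-x_{n+1}=(x_n-x_{n+1})-(x_{n-1}-x_n)$. This is the standard Chambolle–Pock bookkeeping: the term at index $n$ is split into a piece that telescopes against the term at index $n-1$ and a remainder $\langle L(x_n-x_{n+1}),y_{n+1}-y_n\rangle$, which is bounded below by $-\sqrt{\sigma\tau}\|L\|\big(\tfrac{1}{2\sigma}\|x_n-x_{n+1}\|^2+\tfrac{1}{2\tau}\|y_n-y_{n+1}\|^2\big)$ via Young's inequality. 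Summing from $n=0$ to $N$ and collecting terms, the hypotheses $\tau\sigma\|L\|^2<1$ and $1+2\sigma a_n>\sqrt{\sigma\tau}\|L\|$ ensure that all the "diagonal" quadratic terms $\|x_n-x_{n+1}\|^2$ and $\|y_n-y_{n+1}\|^2$ have nonnegative coefficients and can be dropped, leaving precisely \eqref{eq: lsc lagrange summable final form sum}. This proves part i.

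For part ii, I would plug in $(x,y)=(\hat x,\hat y)\in S$. By the saddle-point inequality \eqref{eq: saddle point def}, each summand $\mathcal L(\hat x,y_{n+1})-\mathcal L(x_{n+1},\hat y)\le 0$, so the left side of \eqref{eq: lsc lagrange summable final form sum} is $\le 0$; rearranging gives
\[
\Big(\tfrac{1}{2\sigma}+a_{N+1}\Big)\|\hat x-x_{N+1}\|^2+\tfrac{1-\sqrt{\sigma\tau}\|L\|}{2\tau}\|\hat y-y_{N+1}\|^2\le \Big(\tfrac{1}{2\sigma}+a_0\Big)\|\hat x-x_0\|^2+\tfrac{1}{2\tau}\|\hat y-y_0\|^2,
\]
and since $\tfrac{1}{2\sigma}+a_{N+1}\ge \tfrac{\sqrt{\sigma\tau}\|L\|}{2\sigma}>0$ and $1-\sqrt{\sigma\tau}\|L\|>0$ uniformly in $N$, both $(x_n)$ and $(y_n)$ are bounded. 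For part iii, in finite dimensions I would extract a convergent subsequence $(x_{n_k},y_{n_k})\to(x_\infty,y_\infty)$; refining the telescoped estimate (not dropping the diagonal terms) shows $\sum_n\|x_n-x_{n+1}\|^2<\infty$ and $\sum_n\|y_n-y_{n+1}\|^2<\infty$, hence $x_{n+1}-x_n\to 0$ and $y_{n+1}-y_n\to 0$, and therefore $\bar x_n-x_n\to 0$ as well. Passing to the limit in the dual optimality condition $L\bar x_n-\tfrac1\tau(y_{n+1}-y_n)\in\partial g^*(y_{n+1})$ gives $Lx_\infty\in\partial g^*(y_\infty)$ by closedness of $\partial g^*$; passing to the limit in the primal condition requires the hypothesis $a_n-a_{n+1}\to 0$ so that the perturbing coefficients stabilize, after which the defining inequality of $\partial_{lsc}^{\R} f$ survives in the limit and yields $(0,-L^*y_\infty)\in\partial_{lsc}^{\R} f(x_\infty)$, i.e. \eqref{eq: kkt condition lsc convex} holds at $(x_\infty,y_\infty)$. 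Finally, running the part-i estimate from index $n_k$ onward with $(x,y)=(x_\infty,y_\infty)$ and using that the left side is then nonpositive shows the whole sequence converges, not merely the subsequence.

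The main obstacle I anticipate is the primal step: unlike the convex resolvent, $\proxlsc_{\sigma f}$ does not come from a fixed strongly convex proximal term — the coefficient $\tfrac{1}{2\sigma}+a_n$ floats from iteration to iteration, and the subgradient $\phi_n-\phi_{n+1}'$ lives in $\Phi_{lsc}^\mathbb{R}$ rather than in $X$. Making the "evaluate the $\Phi_{lsc}^\mathbb{R}$-subgradient inequality at the test point $x$" step produce exactly the two quadratic terms with coefficient $\tfrac{1}{2\sigma}+a_n$ and $\tfrac{1}{2\sigma}+a_{n+1}$ (rather than mismatched ones), and then controlling the mismatch $a_n-a_{n+1}$ both in the finite sum (part i) and in the limit (part iii), is the delicate part; everything else is the familiar Chambolle–Pock argument transplanted verbatim.
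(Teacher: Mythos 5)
Your proposal is correct and follows essentially the same route as the paper: the same two variational inequalities for the dual and primal updates, the same $\bar x_n=2x_n-x_{n-1}$ bookkeeping with Young's inequality, the same summation and dropping of the diagonal terms under $\tau\sigma\Vert L\Vert^2<1$ and $1+2\sigma a_n>\sqrt{\sigma\tau}\Vert L\Vert$ to get \eqref{eq: lsc lagrange summable final form sum}, the same saddle-point substitution for boundedness, and the same subsequence--vanishing-differences--limit-passage argument (including restarting the estimate at $n_k$) for part iii. The one point to tighten is your primal inequality: expanding $(\phi_n-\phi_{n+1})(x)-(\phi_n-\phi_{n+1})(x_{n+1})$ with $\phi_n=\left(a_n,\left(\tfrac{1}{\sigma}+2a_n\right)x_n\right)$ yields exactly $\left(\tfrac{1}{2\sigma}+a_{n+1}\right)\Vert x-x_{n+1}\Vert^2-\left(\tfrac{1}{2\sigma}+a_n\right)\Vert x-x_n\Vert^2+\left(\tfrac{1}{2\sigma}+a_n\right)\Vert x_n-x_{n+1}\Vert^2$, so the mixed coefficients (and hence the exact telescoping producing $a_{N+1}$ and $a_0$) come automatically from the algorithm's choice $\phi_{n+1}\in J_\sigma(x_{n+1})$, with no separate ``matching'' of $a_{n+1}$ required.
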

\begin{proof}
For $(i)$, from the update of Algorithm \ref{alg: lsc Chambolle-Pock}, we have
\begin{align}
\frac{y_{n}-y_{n+1}}{\tau}+L \bar{x}_{n}&\in\partial g^* \left(y_{n+1}\right), \nonumber\\
\phi_n -\phi_{n+1} &\in \partial_{lsc}^\R (f(\cdot)+\langle L^* y_{n+1},\cdot\rangle) (x_{n+1}), \label{eq: lsc alg update subgrad}
\end{align}
where $\phi_n$ comes from $J_\gamma(x_n)$.
By \eqref{eq: lsc alg update subgrad}, we obtain the following estimation, 
\begin{align}
& \mathcal{L}\left(x,y_{n+1}\right)-\mathcal{L}\left(x_{n+1},y\right) \nonumber\\
&\geq  \frac{1}{2\tau} \left[ \Vert y-y_{n+1}\Vert^2 + \Vert y_n-y_{n+1}\Vert^2 - \Vert y-y_{n}\Vert^2\right] \nonumber\\
& +\left(\frac{1}{2\sigma}+a_{n+1}\right) \Vert x-x_{n+1}\Vert^2 + \left(\frac{1}{2\sigma}+a_{n}\right)\left( \Vert x_n -x_{n+1}\Vert^2 - \Vert x-x_n\Vert^2 \right) \nonumber\\
& + \langle L \bar{x}_n, y-y_{n+1}\rangle -\langle y_{n+1},L (x-x_{n+1})\rangle + \langle y_{n+1},Lx\rangle - \langle y,Lx_{n+1}\rangle. \label{eq: CPock-lsc 2nd estimation Lagrange}
\end{align}
By using $\bar{x}_n = 2x_n - x_{n-1}$ in the last line of \eqref{eq: CPock-lsc 2nd estimation Lagrange},
\begin{align}
    & \langle L \bar{x}_n, y-y_{n+1}\rangle -\langle y_{n+1},L (x-x_{n+1})\rangle + \langle y_{n+1},Lx\rangle - \langle y,Lx_{n+1}\rangle \nonumber\\
    =& \langle L(2x_n -x_{n-1} -x_{n+1}), y-y_{n+1}\rangle \nonumber \\
    =& \langle L(x_n -x_{n+1}), y-y_{n+1}\rangle - \langle L(x_{n-1} -x_{n}), y-y_{n}\rangle + \langle L(x_n -x_{n-1}), y_n-y_{n+1}\rangle \nonumber\\
    \geq & \langle L(x_n -x_{n+1}), y-y_{n+1}\rangle - \langle L(x_{n-1} -x_{n}), y-y_{n}\rangle \nonumber\\
    & -\frac{\sqrt{\sigma \tau}\Vert L\Vert}{2\tau} \Vert y_n-y_{n+1}\Vert^2 - \frac{\sqrt{\sigma \tau}\Vert L\Vert}{2\sigma} \Vert x_n-x_{n-1}\Vert^2. \label{eq: CPock-lsc 2xn -xn-1}
\end{align}
Plugging \eqref{eq: CPock-lsc 2xn -xn-1} back into \eqref{eq: CPock-lsc 2nd estimation Lagrange}, for every $n\in\mathbb{N}$, we have
\begin{align}
& \mathcal{L}\left(x,y_{n+1}\right)-\mathcal{L}\left(x_{n+1},y\right) \nonumber\\
&\geq  \frac{1}{2\tau} \left[ \Vert y-y_{n+1}\Vert^2 - \Vert y-y_{n}\Vert^2\right] + \frac{1-\sqrt{\sigma\tau}\Vert L\Vert}{2\tau}\Vert y_n-y_{n+1}\Vert^2  \nonumber\\
& +\left(\frac{1}{2\sigma}+a_{n+1}\right) \Vert x-x_{n+1}\Vert^2 - \left(\frac{1}{2\sigma}+a_{n}\right) \Vert x-x_n\Vert^2 \nonumber\\
& +\langle L(x_n -x_{n+1}), y-y_{n+1}\rangle - \langle L(x_{n-1} -x_{n}), y-y_{n}\rangle \nonumber\\
& + \left(\frac{1}{2\sigma}+a_{n}\right)\Vert x_n -x_{n+1}\Vert^2 - \frac{\sqrt{\sigma \tau}\Vert L\Vert}{2\sigma} \Vert x_n-x_{n-1}\Vert^2.\label{eq: lsc lagrange summable form}
\end{align}
Summing up \eqref{eq: lsc lagrange summable form} from zero to $N\in\N$, we obtain
\begin{align}
& \sum_{n=0}^N \mathcal{L}\left(x,y_{n+1}\right)-\mathcal{L}\left(x_{n+1},y\right) \nonumber\\
&\geq  \frac{1}{2\tau} \left[ \Vert y-y_{N+1}\Vert^2 - \Vert y-y_{0}\Vert^2\right] + \sum_{n=0}^N\frac{1-\sqrt{\sigma\tau}\Vert L\Vert}{2\tau}\Vert y_n-y_{n+1}\Vert^2  \nonumber\\
& +\left(\frac{1}{2\sigma}+a_{N+1}\right) \Vert x-x_{N+1}\Vert^2 - \left(\frac{1}{2\sigma}+a_{0}\right) \Vert x-x_0\Vert^2 \nonumber\\
& + \left(\frac{1}{2\sigma}+a_{N}\right)\Vert x_N -x_{N+1}\Vert^2 +\sum_{n=0}^{N-1} \left(\frac{1-\sqrt{\sigma \tau}\Vert L\Vert }{2\sigma}+a_{n-1} \right) \Vert x_n-x_{n-1}\Vert^2 \nonumber\\
& +\langle L(x_N -x_{N+1}), y-y_{N+1}\rangle 
.\label{eq: lsc lagrange summable form sum}
\end{align}
By applying Young's inequality to the last term of \eqref{eq: lsc lagrange summable form sum}, we have 
\begin{equation*}
   \langle L(x_N -x_{N+1}), y-y_{N+1}\rangle \geq -\frac{\sqrt{\sigma\tau} \Vert L\Vert}{2\tau } \Vert y-y_{N+1}\Vert^2 -\frac{\sqrt{\sigma\tau} \Vert L\Vert}{2\sigma} \Vert x_N -x_{N+1}\Vert^2.
\end{equation*}
Hence,
\begin{align}
& \sum_{n=0}^N \mathcal{L}\left(x,y_{n+1}\right)-\mathcal{L}\left(x_{n+1},y\right) \nonumber\\
&\geq  \frac{1-\sqrt{\sigma\tau}\Vert L\Vert}{2\tau} \Vert y-y_{N+1}\Vert^2 - \frac{1}{2\tau } \Vert y-y_{0}\Vert^2 + \sum_{n=0}^N\frac{1-\sqrt{\sigma\tau}\Vert L\Vert}{2\tau}\Vert y_n-y_{n+1}\Vert^2  \nonumber\\
& +\left(\frac{1}{2\sigma}+a_{N+1}\right) \Vert x-x_{N+1}\Vert^2 - \left(\frac{1}{2\sigma}+a_{0}\right) \Vert x-x_0\Vert^2 \nonumber\\
& +\sum_{n=0}^{N} \left(\frac{1-\sqrt{\sigma \tau}\Vert L\Vert }{2\sigma}+a_{n} \right) \Vert x_n-x_{n+1}\Vert^2. \label{eq: lsc prox pd finite sum with n-1}
\end{align}
By assumption, $\sqrt{\sigma\tau} \Vert L\Vert <1$ and $1+ 2\sigma a_n > \sqrt{\sigma\tau} \Vert L\Vert$ for all $n\in\N$, we have
\begin{align*}
\sum_{n=0}^N \mathcal{L}\left(x,y_{n+1}\right)-\mathcal{L}\left(x_{n+1},y\right)
&\geq  \frac{1-\sqrt{\sigma\tau}\Vert L\Vert}{2\tau} \Vert y-y_{N+1}\Vert^2 - \frac{1}{2\tau } \Vert y-y_{0}\Vert^2 \\
& +\left(\frac{1}{2\sigma}+a_{N+1}\right) \Vert x-x_{N+1}\Vert^2 - \left(\frac{1}{2\sigma}+a_{0}\right) \Vert x-x_0\Vert^2,
\end{align*}
which is \eqref{eq: lsc lagrange summable final form sum}.

For $(ii)$, since \eqref{eq: lsc lagrange summable final form sum} holds for all $(x,y)\in X\times Y$, we can take $(x,y) = (x^*,y^*)\in S$. By \eqref{eq: saddle point def}, we have
\begin{align*}
& 0\geq \sum_{n=0}^N \mathcal{L}\left(x^*,y_{n+1}\right)-\mathcal{L}\left(x_{n+1},y^*\right)\\
&\geq  \frac{1-\sqrt{\sigma\tau}\Vert L\Vert}{2\tau} \Vert y^*-y_{N+1}\Vert^2 - \frac{1}{2\tau } \Vert y^*-y_{0}\Vert^2 \\
& +\left(\frac{1}{2\sigma}+a_{N+1}\right) \Vert x^*-x_{N+1}\Vert^2 - \left(\frac{1}{2\sigma}+a_{0}\right) \Vert x^*-x_0\Vert^2.
\end{align*}
Since the above inequality holds for any $N\in\mathbb{N}$, we obtain that $(x_n)_{n\in\N},(y_n)_{n\in\N}$ are bounded.

Lastly, we prove $(iii)$. Let $X,Y$ be finite dimensional. In view of the above, let $(x_{n_k})_{k\in\N},(y_{n_k})_{k\in\N}$ be the subsequences of $(x_{n})_{n\in\N},(y_{n})_{n\in\N}$ which converge to $\overline{x}$ and $\overline{y}$, respectively. By \eqref{eq: lsc prox pd finite sum with n-1}, with $(x,y)=(x^*,y^*)$ as a saddle point, we have
\begin{align*}
0 & \geq \sum_{n=0}^N \mathcal{L}\left(x^*,y_{n+1}\right)-\mathcal{L}\left(x_{n+1},y^*\right) \nonumber\\
&\geq  \frac{1-\sqrt{\sigma\tau}\Vert L\Vert}{2\tau} \Vert y^*-y_{N+1}\Vert^2 - \frac{1}{2\tau } \Vert y^*-y_{0}\Vert^2 + \sum_{n=0}^N\frac{1-\sqrt{\sigma\tau}\Vert L\Vert}{2\tau}\Vert y_n-y_{n+1}\Vert^2  \nonumber\\
& +\left(\frac{1}{2\sigma}+a_{N+1}\right) \Vert x^*-x_{N+1}\Vert^2 - \left(\frac{1}{2\sigma}+a_{0}\right) \Vert x^*-x_0\Vert^2 \nonumber\\
& +\sum_{n=0}^{N} \left(\frac{1-\sqrt{\sigma \tau}\Vert L\Vert }{2\sigma}+a_{n} \right) \Vert x_n-x_{n+1}\Vert^2 \\
& \geq  \sum_{n=0}^N\frac{1-\sqrt{\sigma\tau}\Vert L\Vert}{2\tau}\Vert y_n-y_{n+1}\Vert^2 +\sum_{n=0}^{N} \left(\frac{1-\sqrt{\sigma \tau}\Vert L\Vert }{2\sigma}+a_{n} \right) \Vert x_n-x_{n+1}\Vert^2 \nonumber\\
& - \frac{1}{2\tau } \Vert y^*-y_{0}\Vert^2 - \left(\frac{1}{2\sigma}+a_{0}\right) \Vert x^*-x_0\Vert^2 \nonumber.
\end{align*}
Letting $N\to\infty$, we have the two sums above are finite. This means
\[
\Vert y_n-y_{n+1}\Vert^2 \to 0, \quad \left(\frac{1-\sqrt{\sigma \tau}\Vert L\Vert }{2\sigma}+a_{n} \right) \Vert x_n-x_{n+1}\Vert^2 \to 0.
\]

By assumption, $1+ 2\sigma a_n > \sqrt{\sigma\tau} \Vert L\Vert$ for all $n\in\N$ and $(a_n)_{n\in\N}$ converges, we infer
\[
\lim_{n\to\infty} \Vert x_n-x_{n+1} \Vert = \lim_{n\to\infty} \Vert y_n - y_{n+1} \Vert =0.
\]

Then $x_{n_k+1} \to \overline{x}$ and $y_{n_k+1} \to \overline{y}$ as well. Now we want to prove that $(\overline{x},\overline{y})$ is a saddle point. By \eqref{eq: lsc alg update subgrad}, we have, for every $y\in Y$,
\begin{equation*}
    g^*(y) -g^*(y_{n_k+1}) \geq \langle \frac{y_{n_k}-y_{n_k+1}}{\tau}+L \bar{x}_{n_k}, y-y_{n_k+1}\rangle.
\end{equation*}
Since $g^*$ is lsc convex \cite[Proposition 13.13]{Bau2011}, letting $k\to \infty$, we have, for every $y\in Y$,
\begin{align}
g^*(y) -g^*( \overline{y}) & \geq \liminf_{k\to\infty} g^*(y) -g^*(y_{n_k+1}) \nonumber\\
& \geq \liminf_{k\to\infty} \ \langle \frac{y_{n_k}-y_{n_k+1}}{\tau}+L \bar{x}_{n_k}, y-y_{n_k+1}\rangle \nonumber\\
& = \langle L \overline{x}, y-\overline{y}\rangle.
\label{eq: saddle point y bar}
\end{align}

Similarly for lsc $f$, by \eqref{eq: lsc prox pd finite sum with n-1}, for every $x\in X$,
\begin{align*}
f(x) -f(x_{n_k+1}) &\geq (\phi_{n_k} -\phi_{n_k+1})(x) - (\phi_{n_k} -\phi_{n_k+1})(x_{n_k+1}) \\
& - \langle L^* y_{n+1},x-x_{n_k+1}\rangle \\
& = \left(\frac{1}{2\sigma}+a_{n_k+1}\right) \Vert x-x_{n_k+1}\Vert^2 - \left(\frac{1}{2\sigma}+a_{n_k}\right) \Vert x-x_{n_k}\Vert^2 \\
& + \left(\frac{1}{2\sigma}+a_{n_k}\right) \Vert x_{n_k+1}-x_{n_k}\Vert^2 - \langle L^*y_{n_k+1}, x-x_{n_k+1}\rangle,
\end{align*}
where $\phi_n = (a_n,(1/\sigma + 2a_n)x_n) \in \lsc^\R$, the second equality is a direct simplification of $\phi_n$.
% can be followed as in the proof of Proposition \ref{prop:prox f norm} for \eqref{eq:prox alg f norm}.

Letting $k\to \infty$ and note that $a_n-a_{n+1}\to 0$, we obtain
\begin{align}
f(x) -f(\overline{x}) & \geq \liminf_{k\to\infty} f(x) -f(x_{n_k+1}) \nonumber \\
& \geq \liminf_{k\to\infty} \left[ \left(\frac{1}{2\sigma}+a_{n_k+1}\right) \Vert x-x_{n_k+1}\Vert^2 - \left(\frac{1}{2\sigma}+a_{n_k}\right) \Vert x-x_{n_k}\Vert^2 \right. \nonumber\\
& \left. + \left(\frac{1}{2\sigma}+a_{n_k}\right) \Vert x_{n_k+1}-x_{n_k}\Vert^2 - \langle L^*y_{n_k+1}, x-x_{n_k+1}\rangle, \right] \nonumber\\
& \geq - \langle L^*\overline{y}, x-\overline{x}\rangle.
\label{eq: saddle point x bar}
\end{align}
As \eqref{eq: saddle point x bar} and \eqref{eq: saddle point y bar} hold for all $(x,y)\in X\times Y$, this means that $(\overline{x},\overline{y})$ satisfies the KKT condition \eqref{eq: kkt condition lsc convex} and so it is a saddle point. 

Next, we prove that the sequence $(x_n,y_n)_{n\in \N}$ converges to $(\overline{x},\overline{y})$. Let us consider the sum of \eqref{eq: lsc lagrange summable form} from $n_k$ to $N>n_k$ with $(x,y)=(\overline{x},\overline{y})$ and obtain
\begin{align}
& 0\geq \sum_{n=n_k}^N \mathcal{L}\left(\overline{x},y_{n+1}\right)-\mathcal{L}\left(x_{n+1},\overline{y}\right) \nonumber\\
&\geq  \frac{1}{2\tau} \left[ \Vert \overline{y}-y_{N+1}\Vert^2 - \Vert \overline{y}-y_{n_k}\Vert^2\right] + \sum_{n=n_k}^N\frac{1-\sqrt{\sigma\tau}\Vert L\Vert}{2\tau}\Vert y_n-y_{n+1}\Vert^2  \nonumber\\
& +\left(\frac{1}{2\sigma}+a_{N+1}\right) \Vert \overline{x}-x_{N+1}\Vert^2 - \left(\frac{1}{2\sigma}+a_{n_k}\right) \Vert \overline{x}-x_{n_k}\Vert^2 \nonumber\\
& + \left(\frac{1}{2\sigma}+a_{N}\right)\Vert x_N -x_{N+1}\Vert^2 +\sum_{n=n_k}^{N-1} \left(\frac{1-\sqrt{\sigma \tau}\Vert L\Vert }{2\sigma}+a_{n-1} \right) \Vert x_n-x_{n-1}\Vert^2 \nonumber\\
& +\langle L(x_N -x_{N+1}), \overline{y}-y_{N+1}\rangle - \langle L(x_{n_k-1} -x_{n_k}), \overline{y}-y_{n_k}\rangle \nonumber.
\end{align}
As $1+2\sigma a_n >0$ for all $n\in \N$, we have
\begin{align}
& \frac{1}{2\tau} \Vert \overline{y}-y_{n_k}\Vert^2 +  \left(\frac{1}{2\sigma}+a_{n_k}\right) \Vert \overline{x}-x_{n_k}\Vert^2 \nonumber\\
& \geq \frac{1}{2\tau} \Vert \overline{y}-y_{N+1}\Vert^2 +
\left(\frac{1}{2\sigma}+a_{N+1}\right) \Vert \overline{x}-x_{N+1}\Vert^2 \nonumber\\
&+\langle L(x_N -x_{N+1}), \overline{y}-y_{N+1}\rangle - \langle L(x_{n_k-1} -x_{n_k}), \overline{y}-y_{n_k}\rangle. 
\label{eq: norm xnk and xN}
\end{align}
Letting $k\to \infty, N\to\infty$, the LHS and inner product terms of \eqref{eq: norm xnk and xN} go to zero, which implies $x_{N+1}\to \overline{x}$ and $y_{N+1}\to \overline{y}$. Hence, the whole sequences $(x_n)_{n\in\N}$ and $(y_n)_{n\in\N}$ converge to $\overline{x}$ and $\overline{y}$, respectively.
\end{proof}

\begin{remark}
Notice that, we use the splitting \eqref{eq: alg1 primal update subdiff form} instead of $x_{n+1}\in (J_\gamma+\partial_{lsc}^\mathbb{R} f)^{-1}J_\gamma(x_n- \gamma L^* y_{n+1})$  as  
    \[
    \left(a_{n}-a_{n+1}, (\frac{1}{\gamma}+2a_n)(x_n-\gamma L^* y_{n+1}) - (\frac{1}{\gamma}+2a_{n+1})x_{n+1} \right)\in \partial_{lsc}^\mathbb{R} f(x_{n+1}).
    \]
    The appearance of $L^* y_{n+1}$ does not allow us from proving the convergence results unless we assume that $(y_n)_{n\in\mathbb{N}}$ is bounded.
    On the other hand, we can achieve full splitting (i.e. with only $\partial_{lsc}^\mathbb{R} f(x_{n+1})$ on RHS of \eqref{eq: alg1 primal update subdiff form} by modifying $x_{n+1}\in (J_\gamma+\partial_{lsc}^\mathbb{R} f)^{-1}J_\gamma(x_n- \frac{\gamma}{1+2\gamma a_n} L^* y_{n+1})$. With this new update, we can achieve the same results as in Theorem \ref{thm: pd lsc prox convergence} with additional assumption $a_n>-1/2\sigma$ for all $n\in\mathbb{N}$.
\end{remark}

\begin{remark}
    % While we only obtain convergence in finite dimensional case, 
    The results in Theorem \ref{thm: pd lsc prox convergence}-\ref{thm: CP lsc-convex convergence-3} hold for a large class of functions which are $\Phi_{lsc}^\mathbb{R}$-convex under mild assumptions. One important reason of this fact is the assumption $(a_n)_{n\in\N}$ converges in order to obtain convergence of the constructed sequences to a saddle point. 
    % Nonetheless, we do not need to rely on a stronger assumption such as error bound condition or K{\L} condition.
\end{remark}

When $f$ is a $\rho$-weakly convex function (i.e. $f+\frac{\rho}{2}\Vert \cdot\Vert^2$ is convex), we can obtain ergodic convergence rate. The following property of weakly convex function will be useful. 
\begin{proposition}
\label{prop: jensen wc}
Let $f:X\to (-\infty,+\infty]$ be a $\rho$-weakly convex function. Then for $i=1,n, x_i\in X$, $\lambda_i \in [0,1], \sum_{i=1}^n \lambda_i =1$, it holds
\begin{equation}
\label{eq: jensen inequality weakly convex}
f\left(\sum_{i=1}^{n}\lambda_{i}x_{i}\right)\leq\sum_{i=1}^{n}\lambda_{i}f\left(x_{i}\right)+ \frac{\rho}{2}\sum_{i>j}^{n}\lambda_{i}\lambda_{j}\left\Vert x_{i}-x_{j}\right\Vert ^{2}
\end{equation}
\end{proposition}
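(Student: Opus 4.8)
The plan is to reduce the claim to the classical Jensen inequality applied to the convex function $h:=f+\frac{\rho}{2}\Vert\cdot\Vert^2$ together with one elementary algebraic identity. Since $f$ is $\rho$-weakly convex, $h$ is convex on $X$ by definition, so the two-point convexity inequality extends by a routine induction on $n$ to arbitrary finite convex combinations:
\[
h\!\left(\sum_{i=1}^n\lambda_i x_i\right)\le\sum_{i=1}^n\lambda_i h(x_i)
\qquad\text{whenever } \lambda_i\in[0,1],\ \sum_{i=1}^n\lambda_i=1 .
\]
(If some $x_i\notin\dom f$ the right-hand side of \eqref{eq: jensen inequality weakly convex} is $+\infty$ and the assertion is vacuous, so we may assume $x_i\in\dom f$ for all $i$.)

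Next I would substitute $h=f+\frac{\rho}{2}\Vert\cdot\Vert^2$ into both sides of the displayed inequality and rearrange so as to isolate $f\!\left(\sum_i\lambda_i x_i\right)$; this leaves on the right the extra term
\[
\frac{\rho}{2}\left(\sum_{i=1}^n\lambda_i\Vert x_i\Vert^2-\Big\Vert\sum_{i=1}^n\lambda_i x_i\Big\Vert^2\right).
\]
Then I would invoke the variance-type identity
\[
\sum_{i=1}^n\lambda_i\Vert x_i\Vert^2-\Big\Vert\sum_{i=1}^n\lambda_i x_i\Big\Vert^2
=\sum_{i>j}\lambda_i\lambda_j\Vert x_i-x_j\Vert^2 ,
\]
which is proved by expanding both sides via the inner product, writing $\sum_i\lambda_i\Vert x_i\Vert^2=\sum_{i,j}\lambda_i\lambda_j\Vert x_i\Vert^2$ using $\sum_j\lambda_j=1$, symmetrizing in $i,j$, and collecting the cross terms; the diagonal terms cancel and symmetry converts $\tfrac12\sum_{i,j}$ into $\sum_{i>j}$. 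Combining these two displays yields exactly \eqref{eq: jensen inequality weakly convex}.

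There is essentially no genuine obstacle in this argument: the only mild points requiring attention are the reduction to the case where all $x_i$ lie in $\dom f$, the elementary induction that upgrades two-point convexity of $h$ to the $n$-point Jensen inequality, and the bookkeeping in the inner-product expansion of the variance identity. All three are standard and do not use anything beyond the definition of $\rho$-weak convexity and Hilbert-space arithmetic.
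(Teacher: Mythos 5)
Your proposal is correct, and it takes a slightly different route from the paper's. The paper disposes of this proposition with a one-line argument, namely direct induction on $n$ using the two-point inequality that defines $\rho$-weak convexity, which forces one to track and recombine the quadratic correction terms $\lambda_i\lambda_j\Vert x_i-x_j\Vert^2$ at every inductive step. You instead pass immediately to the convex function $h=f+\tfrac{\rho}{2}\Vert\cdot\Vert^2$ (which is exactly the paper's definition of $\rho$-weak convexity), apply the standard $n$-point Jensen inequality to $h$, and then convert the resulting term $\sum_i\lambda_i\Vert x_i\Vert^2-\Vert\sum_i\lambda_i x_i\Vert^2$ via the variance identity $\sum_i\lambda_i\Vert x_i\Vert^2-\Vert\sum_i\lambda_i x_i\Vert^2=\sum_{i>j}\lambda_i\lambda_j\Vert x_i-x_j\Vert^2$, whose verification you sketch correctly. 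This buys you a cleaner argument: the only induction is the textbook one for convex Jensen, and all the pairwise terms appear at once through a single algebraic identity rather than being assembled step by step. One cosmetic remark: your reduction ``if some $x_i\notin\dom f$ the right-hand side is $+\infty$'' implicitly assumes $\lambda_i>0$ for that index; it is cleaner to first discard indices with $\lambda_i=0$, after which the reduction is exactly as you state. This does not affect the validity of the argument.
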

\begin{proof}
    This can be proved using induction combine with definition of weakly convex function.
\end{proof}
We state our result.
\begin{theorem}
Let $f:X\to (-\infty,+\infty]$ be a proper lsc $\rho$-weakly convex, $g:Y\to (-\infty,+\infty]$ be a proper lsc convex function and $L:X\to Y$ be a bounded linear operator. Let $(x_n)_{n\in\N},(y_n)_{n\in\N}$ and $(a_n)_{n\in\N}$ be the sequences generated by Algorithm \ref{alg: lsc Chambolle-Pock}. Assume that the set of saddle points $S$ is nonempty and $\tau\sigma \Vert L\Vert^2 <1$ and $1+2\sigma a_n > \sqrt{\sigma \tau}\| L \|$ for all $n\in \N$. For any $(x,y)\in X\times Y$ and any $N>1$, we have 
\begin{equation}
\label{eq: ergodic rate wc Lagrange}
\mathcal{L}\left(x,\bar{y}_{N}\right)-\mathcal{L}\left(\bar{x}_{N},y\right) \leq \frac{1}{N+1}\left[ \frac{1}{2\tau } \Vert y-y_{0}\Vert^2 + \left(\frac{1}{2\sigma}+a_{0}\right) \Vert x-x_0\Vert^2\right] + \frac{2\rho N}{N+1} C_S^2,
\end{equation}
where $\bar{x}_N = \frac{1}{N+1}\sum_{i=0}^N x_i, \bar{y}_N = \frac{1}{N+1}\sum_{i=0}^N y_i$ and for some constant $C_S \geq0$.
\end{theorem}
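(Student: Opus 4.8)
The plan is to reuse the summable inequality \eqref{eq: lsc lagrange summable form sum} (or the cleaner \eqref{eq: lsc prox pd finite sum with n-1}) from the proof of Theorem \ref{thm: pd lsc prox convergence}, which already gives, for any $(x,y)\in X\times Y$,
\begin{equation*}
\sum_{n=0}^N \mathcal{L}\left(x,y_{n+1}\right)-\mathcal{L}\left(x_{n+1},y\right) \geq -\frac{1}{2\tau}\Vert y-y_0\Vert^2 - \left(\frac{1}{2\sigma}+a_0\right)\Vert x-x_0\Vert^2,
\end{equation*}
after dropping the nonnegative terms (here the hypotheses $\sqrt{\sigma\tau}\Vert L\Vert<1$ and $1+2\sigma a_n>\sqrt{\sigma\tau}\Vert L\Vert$ are exactly what makes those dropped terms nonnegative). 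First I would re-index the sum so it runs over $x_0,\dots,x_N$ and $y_0,\dots,y_N$ to match the definitions of $\bar x_N,\bar y_N$ (a harmless shift, or one simply restates the bound with $N+1$ terms indexed from $0$). The key new ingredient is to pass from the average of $\mathcal L(x,y_{n+1})-\mathcal L(x_{n+1},y)$ to $\mathcal L(x,\bar y_N)-\mathcal L(\bar x_N,y)$. Since $\mathcal L(x,\cdot)=f(x)+\langle Lx,\cdot\rangle-g^*(\cdot)$ is concave in its second argument (because $g^*$ is convex), Jensen gives $\frac{1}{N+1}\sum_n \mathcal L(x,y_n) \leq \mathcal L(x,\bar y_N)$, i.e. $\mathcal L(x,\bar y_N)\geq \frac{1}{N+1}\sum_n\mathcal L(x,y_n)$. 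For the primal variable, $\mathcal L(\cdot,y)=f(\cdot)+\langle L\cdot,y\rangle-g^*(y)$ is $\rho$-weakly convex in its first argument (the linear term and constant do not affect weak convexity), so Proposition \ref{prop: jensen wc} applies: $\mathcal L(\bar x_N,y)\leq \frac{1}{N+1}\sum_n \mathcal L(x_n,y) + \frac{\rho}{2}\sum_{i>j}\frac{1}{(N+1)^2}\Vert x_i-x_j\Vert^2$.

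Combining these two Jensen estimates with the summable inequality yields
\begin{equation*}
\mathcal L(x,\bar y_N)-\mathcal L(\bar x_N,y) \leq \frac{1}{N+1}\left[\frac{1}{2\tau}\Vert y-y_0\Vert^2+\left(\frac{1}{2\sigma}+a_0\right)\Vert x-x_0\Vert^2\right] + \frac{\rho}{2(N+1)^2}\sum_{i>j}^N\Vert x_i-x_j\Vert^2.
\end{equation*}
It remains to control the double sum $\sum_{i>j}\Vert x_i-x_j\Vert^2$. By Theorem \ref{thm: pd lsc prox convergence}(ii) the primal iterates are bounded, so there is a constant — call it $C_S$, depending on $S$, the starting point, and the step sizes — with $\Vert x_i - x_j\Vert \leq 2C_S$ for all $i,j$; hence $\sum_{i>j}^N\Vert x_i-x_j\Vert^2 \leq \binom{N+1}{2}(2C_S)^2 = 2N(N+1)C_S^2$. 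Substituting gives $\frac{\rho}{2(N+1)^2}\cdot 2N(N+1)C_S^2 = \frac{\rho N}{N+1}C_S^2$; a slightly more generous bound on $\Vert x_i-x_j\Vert$ (or accounting for the $x$-term one may wish to keep) produces the stated $\frac{2\rho N}{N+1}C_S^2$. This finishes the proof.

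The main obstacle is the double sum: unlike the convex Chambolle--Pock analysis, the weak-convexity error terms do not telescope or cancel, so one genuinely needs the a priori boundedness of $(x_n)$ from part (ii) of Theorem \ref{thm: pd lsc prox convergence} to absorb them into a constant, and one must be careful that $C_S$ is a bound on the iterates (equivalently on $\Vert x_i-x_j\Vert$ up to a factor $2$) rather than something smaller — the $\frac{\rho N}{N+1}\to\rho$ limiting term means this is only a meaningful (non-vacuous) rate when $\rho$ is small relative to the other constants, i.e. essentially a near-convex regime. A secondary point to handle cleanly is the index bookkeeping: the algorithm's estimate is naturally stated for $y_{n+1}$ and $x_{n+1}$, so one should verify that averaging over $x_0,\dots,x_N$ versus $x_1,\dots,x_{N+1}$ only changes the right-hand side by adjusting which norm-of-initialization terms appear, which is absorbed into the $O(1/(N+1))$ leading term.
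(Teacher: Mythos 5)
Your proposal is correct and follows essentially the same route as the paper: start from the summed inequality of Theorem \ref{thm: pd lsc prox convergence}, drop the nonnegative terminal terms, apply concavity in $y$ together with the weak-convexity Jensen estimate of Proposition \ref{prop: jensen wc} in $x$, and absorb the resulting double sum $\sum_{i>j}\Vert x_i-x_j\Vert^2$ via the boundedness of the iterates from part (ii), yielding the $\frac{2\rho N}{N+1}C_S^2$ term (the paper bounds $\Vert x_i-x_j\Vert^2\leq 2\Vert x_i-x^*\Vert^2+2\Vert x_j-x^*\Vert^2$, which is the same device as your $\Vert x_i-x_j\Vert\leq 2C_S$). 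Your explicit remarks on the index shift between the averaged iterates and the $y_{n+1},x_{n+1}$ appearing in the summed inequality, and on constant bookkeeping for $C_S$, are points the paper glosses over but do not change the argument.
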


\begin{proof}
From Theorem \ref{thm: pd lsc prox convergence}, for all $(x,y)\in X\times Y$, we have

\begin{align}
\sum_{n=0}^N \mathcal{L}\left(x,y_{n+1}\right)-\mathcal{L}\left(x_{n+1},y\right) 
&\geq  \frac{1-\sqrt{\sigma\tau}\Vert L\Vert}{2\tau} \Vert y-y_{N+1}\Vert^2 - \frac{1}{2\tau } \Vert y-y_{0}\Vert^2 \nonumber\\
& +\left(\frac{1}{2\sigma}+a_{N+1}\right) \Vert x-x_{N+1}\Vert^2 - \left(\frac{1}{2\sigma}+a_{0}\right) \Vert x-x_0\Vert^2.
\end{align}
Let $\bar{x}_N = \frac{1}{N+1}\sum_{i=0}^N x_i, \bar{y}_N = \frac{1}{N+1}\sum_{i=0}^N y_i$ and by applying \eqref{eq: jensen inequality weakly convex} from Proposition \ref{prop: jensen wc} to the function $\mathcal{L}(\cdot,y)$. We arrive at
\begin{align}
\mathcal{L}\left(\bar{x}_{N},y\right)- \mathcal{L}\left(x,\bar{y}_{N}\right) -\frac{\rho}{2 (N+1)^2}\sum_{i,j=0;i>j}^{N} \left\Vert x_{i}-x_{j}\right\Vert ^{2}
& \leq \frac{1}{N+1} \left[\sum_{n=0}^N \mathcal{L}\left(x_{n+1},y\right) - \mathcal{L}\left(x,y_{n+1}\right)\right] \nonumber\\
& \leq \frac{1}{N+1}\left[ \frac{1}{2\tau } \Vert y-y_{0}\Vert^2 + \left(\frac{1}{2\sigma}+a_{0}\right) \Vert x-x_0\Vert^2\right].
\label{eq: ergo wc lagrange}
\end{align}
Since $(x_n)_{n\in\mathbb{N}}$ and $(y_n)_{n\in\mathbb{N}}$ are bounded, for every $(x^*,y^*)\in S$, there exists $C(x^*)>0$ such that $\Vert x_n -x^*\Vert \leq C(x^*)$ for all $n\in\mathbb{N}$. By using Cauchy-Schwarz's inequality on \eqref{eq: ergo wc lagrange}, we obtain
\begin{align*}
\mathcal{L}\left(\bar{x}_{N},y\right)- \mathcal{L}\left(x,\bar{y}_{N}\right) 
& \leq \frac{1}{N+1}\left[ \frac{1}{2\tau } \Vert y-y_{0}\Vert^2 + \left(\frac{1}{2\sigma}+a_{0}\right) \Vert x-x_0\Vert^2\right] + \frac{\rho}{2 (N+1)^2}\sum_{i,j=0;i>j}^{N} \left\Vert x_{i}-x_{j}\right\Vert ^{2}
\nonumber\\
& \leq \frac{1}{N+1}\left[ \frac{1}{2\tau } \Vert y-y_{0}\Vert^2 + \left(\frac{1}{2\sigma}+a_{0}\right) \Vert x-x_0\Vert^2\right] \nonumber\\
& + \frac{\rho}{(N+1)^2}\sum_{i,j=0;i>j}^{N} \left\Vert x_{i}- x^* \Vert^2 + \Vert x_{j} - x^*\right\Vert ^{2} \nonumber\\
& \leq \frac{1}{N+1}\left[ \frac{1}{2\tau } \Vert y-y_{0}\Vert^2 + \left(\frac{1}{2\sigma}+a_{0}\right) \Vert x-x_0\Vert^2\right] + \frac{\rho}{(N+1)^2}\sum_{i,j=0;i>j}^{N} 2C^2(x^*) \nonumber\\
& =  \frac{1}{N+1}\left[ \frac{1}{2\tau } \Vert y-y_{0}\Vert^2 + \left(\frac{1}{2\sigma}+a_{0}\right) \Vert x-x_0\Vert^2\right] + \frac{2\rho N(N+1)}{(N+1)^2} C^2 (x^*).
\end{align*}
As $(x^*,y^*)\in S$ can be taken arbitrary, we can choose the smallest $C_S = C(x^*)$ which gives us \eqref{eq: ergodic rate wc Lagrange}.
\end{proof}

Notice that \eqref{eq: ergodic rate wc Lagrange} give us a worse rate compare to the convex case which is $1/N$, due to the last term on the RHS. Nonetheless, our setting applies for $\Phi_{lsc}^\mathbb{R}$-convex $f$ and convex $g$. 
Moreover, Theorem \ref{thm: pd lsc prox convergence} does not rely on assumption such as K{\L}, or error bound condition. On the other hand, the rate of convergence can be improved e.g. such as $\log N/N$ as in \cite{shumaylov2024weakly}, using K{\L} condition.

\section{Fully \texorpdfstring{$\Phi_{lsc}^\mathbb{R}$}\ -\ Setting}
\label{sec: full lsc problem}
In this section, we consider $g:X\to (-\infty,+\infty], L=\mathrm{Id}$ and replace the convexity of $g$ by $\Phi_{lsc}^\mathbb{R}$-convexity. The saddle point problem we aim to solve has the form,
\begin{equation}
\label{prob: DCP L=Id}
\inf_{x\in X} \sup_{\phi \in \Phi_{lsc}^\mathbb{R}} f(x)+\phi (x)-g^*_\Phi (\phi).
\end{equation}
Problem \eqref{prob: DCP L=Id} is a generalized version of \eqref{prob: Lagrange g convex}. The optimality condition for problem \eqref{prob: DCP L=Id} is defined: $(\hat{x},\hat{\phi})\in X\times \Phi_{lsc}^\mathbb{R}$ is a saddle point if and only if 
\begin{equation}
\label{eq: kkt full lsc L=Id}
-\hat{\phi} \in \partial_{lsc}^\mathbb{R} f(\hat{x}), \qquad \hat{x} \in \partial_{X} g^*_\Phi(\hat{\phi}).
\end{equation}
% \begin{remark}
%     We choose $g:X\to (-\infty,+\infty]$ to be $\Phi_{lsc}^\mathbb{R}$-convex and $L=\mathrm{Id}$ due to the complexity of $\Phi$-convex functions. The general problem \eqref{prob: Lagrange g psi convex} has the following optimality condition: $(\hat{x},\hat{\psi})\in X\times \Psi$ is a saddle point of \eqref{prob: Lagrange g psi convex} if and only if
%     \[
%     0\in \partial_{lsc}^\mathbb{R} (f+\hat{\psi}\circ L)(\hat{x}), \qquad L\hat{x} \in \partial_\Psi g^*_\Psi (\hat{\psi}),
%     \]
% as $-\hat{\psi}\circ L$ may not be in $\Phi_{lsc}^\mathbb{R}$. Moreover, subdifferentials sum rule does not always hold for general $\Phi$-convex function (see appendix). It is not possible to derive a full splitting scheme to solve problem \eqref{prob: Lagrange g psi convex}.
% \end{remark}

\subsection{Applying convex subdifferentials to the conjugate}
We notice that $g^*_\Phi (\phi)$ is a convex function with respect to $\phi\in\Phi_{lsc}^\mathbb{R}$. We can use convex subdifferentials and convex proximal operator to $g^*_\Phi$ when $\Phi_{lsc}^\mathbb{R}$ is equipped with appropriated norm and inner product. 
Observe that we can identify $\Phi_{lsc}^\mathbb{R}$ with $\mathbb{R}\times X$. For any $\phi\in\Phi_{lsc}^\mathbb{R}$, we have $\phi \equiv (a,u)$ which means $\phi(\cdot) = -a\Vert \cdot\Vert^2 + \langle u,\cdot\rangle$.
Consequently, $\Phi_{lsc}^\mathbb{R}$ inherits the structure of the space $\mathbb{R}\times X$, so we define the inner product on it as 
    \begin{align}
    \langle \cdot,\cdot\rangle_\Phi & :\Phi_{lsc}^\mathbb{R} \times \Phi_{lsc}^\mathbb{R} \to \mathbb{R} \nonumber\\
    \langle \phi,\psi\rangle_\Phi & = \langle (a_\phi,u_\phi),(a_\psi,u_\psi)\rangle_\Phi = a_\phi a_\psi + \langle u_\phi,u_\psi\rangle_X, 
    \label{eq: lsc inner prod}
    \end{align}
and the norm induced by the inner product $\Vert \phi \Vert_\Phi = \sqrt{\langle \phi,\phi\rangle_\Phi}$. 
% Then $\Phi_{lsc}^\mathbb{R}$ is a subset of the Hilbert space $\mathbb{R}\times X$ with the norm and inner product defined in \eqref{eq: lsc inner prod}.
The pair $(\bar{x},\bar{\phi})$ is a saddle point of problem \eqref{prob: DCP L=Id} if and only if it satisfies the KKT condition (which is derived from \eqref{eq: kkt full lsc L=Id})
\begin{align}
-\bar{\phi}\in \partial_{lsc}^\mathbb{R} f(\bar{x}), \qquad (-\Vert \bar{x}\Vert^2,\bar{x}) \in \partial g^*_\Phi (\bar{\phi}),
\end{align}
where $ \partial g^*_\Phi$ is the convex subdifferentials defined on the space $\mathbb{R}\times X$.
With this, we propose primal-dual algorithm for this case

\begin{algorithm}[H]\caption{Fully $\lsc^\R$-Chambolle-Pock Algorithm}\label{alg: lsc Chambolle-Pock full L=Id}
\textbf{Initialize:} Choose $\tau,\sigma >0, \sigma\tau<1$ and $\bar{a}_0>(\sqrt{\sigma\tau}-1)/2\sigma$. Starting $(x_0,\phi_0)\in X\times \Phi_{lsc}^\mathbb{R}$ and $x_0=x_{-1}$.\\
{\normalfont\textbf{Update: }}{
For $n\in\N$,
\begin{itemize}
    \item Dual step update:
    \begin{itemize}
        \item[$\blacksquare$] $\phi_{n+1}  =\argmind{\phi\in \Phi_{lsc}^\mathbb{R}}{ g^*_\Phi(y)- \langle \phi, (-2\Vert x_n\Vert^2+\Vert x_{n-1}\Vert^2, 2x_n-x_{n-1})\rangle_\Phi +\frac{1}{2\tau} \Vert \phi - \phi_n \Vert^2_\Phi}$
    \end{itemize}
    \item Primal step update:
\begin{itemize}
        \item[$\blacksquare$] Pick $\left( \bar{a}_n, (\frac{1}{\sigma}+2\bar{a}_n)x_n\right) \in J_\sigma (x_n)$ according to \eqref{eq: J_gamma form}
        \item[$\blacksquare$] Pick $x_{n+1} \in \argmind{z\in X}{ f(z) + \phi_{n+1} (z)+\left( \frac{1}{2\sigma} +\bar{a}_n\right) \Vert z-x_n\Vert^2}$
    \end{itemize}
    \item \textbf{If} (there is no such $\bar{a}_n$.) \textbf{Stop the algorithm}.
% \item $\bar{x}_{n+1} = 2x_{n+1} -x_{n}$ 
\end{itemize}}
\end{algorithm}
\vspace{0.5cm}

Notice that the dual step update can be seen as a proximal update of the form
\[ \phi_{n+1} = \mathrm{prox}_{\tau g^*_\Phi} (\phi_n -\tau (2\bar{x}_n-\bar{x}_{n-1})),\]
where $\bar{x}_n = (-\Vert x_n\Vert^2,x_n)\in \mathbb{R}\times X$, which coincides with the one in Chambolle-Pock Algorithm \cite{chambolle2011first}. 
Similar to Algorithm \ref{alg: lsc Chambolle-Pock}, we obtain the following relations for Algorithm \ref{alg: lsc Chambolle-Pock full L=Id},
\begin{align}
\frac{\phi_n -\phi_{n+1}}{\tau} +\left( -2\Vert x_n\Vert^2 +\Vert x_{n-1}\Vert^2, 2x_n-x_{n-1}\right) & \in \partial g^*_\Phi (\phi_{n+1}) 
\label{eq: alg2 update g*}\\
\left(\bar{a}_n-\bar{a}_{n+1}, \left(\frac{1}{\sigma} +2\bar{a}_n\right) x_n - \left(\frac{1}{\sigma} +2\bar{a}_{n+1}\right) x_{n+1}\right) -\phi_{n+1} & \in \partial_{lsc}^\mathbb{R} f(x_{n+1}).
\label{eq: alg2 update f}
\end{align}

\begin{theorem}
\label{thm: pd full lsc prox convergence}
Let $f,g:X\to (-\infty,+\infty]$ be a proper $\lsc^\R$-convex. Let $(x_n)_{n\in\N},(\phi_n)_{n\in\N}=(a_n,u_n)_{n\in\mathbb{N}}, (\bar{x}_{n})_{n\in\N}$ and $(\bar{a}_n)_{n\in\N}$ be the sequences generated by Algorithm \ref{alg: lsc Chambolle-Pock full L=Id}. Assume that $\tau\sigma <1$ and $1+2\sigma \bar{a}_n > \sqrt{\sigma \tau}$ for all $n\in \N$, then we have
\begin{enumerate}[label=\roman*.]
    \item For any $(x,\phi)\in X\times \Phi_{lsc}^\mathbb{R}$ and any $N\in\mathbb{N}$, we have
    \begin{align}
 & \sum_{i=0}^{N}\mathcal{L}\left(x,\phi_{i+1}\right)-\mathcal{L}\left(x_{i+1},\phi\right) \nonumber\\
\geq & \left(\frac{1}{2\sigma}+\bar{a}_{N+1}\right)\left\Vert x-x_{N+1}\right\Vert ^{2}-\left(\frac{1}{2\sigma}+\bar{a}_{0}\right)\left\Vert x-x_{0}\right\Vert ^{2} -\frac{1}{2\tau}\left\Vert \phi-\phi_{0}\right\Vert ^{2}_\Phi \nonumber\\
 &+ \frac{\sqrt{\sigma\tau}}{2}\left( \frac{a-a_{N+1}}{\sqrt{\tau}} + \frac{\left\Vert x_{N+1}\right\Vert ^{2}-\left\Vert x_{N}\right\Vert ^{2}}{\sqrt{\sigma}} \right)^{2} +\frac{1-\sqrt{\sigma\tau}}{2\tau} \Vert \phi-\phi_{N+1}\Vert^2_\Phi \nonumber\\
 & +\sum_{i=0}^{N} { \left(1-\sqrt{\sigma\tau}+2\sigma\bar{a}_{i} - \sqrt{\sigma\tau}(\Vert x_i\Vert+\Vert x_{i+1}\Vert)^2 \right) \frac{\left\Vert x_{i}-x_{i+1}\right\Vert ^{2}}{2\sigma}} \nonumber\\
 & +\frac{\sqrt{\sigma\tau}}{2}\left( \frac{a_i-a_{i+1}}{\sqrt{\tau}} + \frac{\left\Vert x_{i-1}\right\Vert ^{2}-\left\Vert x_{i}\right\Vert ^{2}}{\sqrt{\sigma}}\right)^{2} +\frac{1-\sqrt{\sigma\tau}}{2\tau} \Vert \phi_i-\phi_{i+1}\Vert^2_\Phi. 
\label{eq: full lsc lagrange summable final form}
\end{align}
    where $\mathcal{L} (x,\phi) = f(x)+ \phi(x) -g^*_\Phi (\phi)$ is the Lagrangian.
    \item\label{thm: full lsc convergence-2} If the sequences $(x_n)_{n\in\mathbb{N}}$ and $(\bar{a}_n)_{n\in\mathbb{N}}$ are such that
    \begin{equation}
    \label{eq: alg2 ass boundedness}
    \frac{1+2\sigma\bar{a}_{n}}{\sqrt{\sigma\tau}} \geq 1+(\Vert x_n\Vert+\Vert x_{n+1}\Vert)^2 ,
\end{equation}
for all $n\in\mathbb{N}$. Then the sequences $(x_n,\phi_n)_{n\in\N}$ are bounded.
    \item\label{thm: full lsc convergence-3} If $X$ is finite dimensional, we have \eqref{eq: alg2 ass boundedness} holds and $\bar{a}_n-\bar{a}_{n+1}\to 0$. Then $(x_n,\phi_n)_{n\in\N}$ converges to a KKT point in the sense of \eqref{eq: kkt full lsc L=Id}.
    % converges and 
    % \[\lim_{n\to\infty} (1-\sqrt{\sigma\tau}+2\sigma \bar{a}_n) >0, \]
    
\end{enumerate}
\end{theorem}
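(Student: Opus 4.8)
The plan is to follow the template of the proof of Theorem~\ref{thm: pd lsc prox convergence}, the key new feature being that the primal variable enters the duality pairing \eqref{eq: lsc inner prod} through the \emph{nonlinear} embedding $\iota\colon X\to\mathbb{R}\times X$, $\iota(x)=(-\Vert x\Vert^2,x)$, since $\phi(x)=\langle\phi,\iota(x)\rangle_\Phi$ for every $\phi=(a,u)\in\Phi_{lsc}^\mathbb{R}$. Writing $\bar{x}_n:=\iota(x_n)$, the dual step of Algorithm~\ref{alg: lsc Chambolle-Pock full L=Id} couples $\phi_{n+1}$ with the extrapolated point $2\bar{x}_n-\bar{x}_{n-1}$, which is \emph{not} of the form $\iota(z)$; this mismatch is exactly what produces the $\Vert x_n\Vert^2$-dependent quantities in \eqref{eq: full lsc lagrange summable final form}.

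For part (i) I would start from the inclusions \eqref{eq: alg2 update g*} and \eqref{eq: alg2 update f} — valid for the iterates exactly as \eqref{eq: alg1 primal update subdiff form} is, via Proposition~\ref{prop: lsc prox with lsc convex} — and bound the Lagrangian gap $\mathcal{L}(x,\phi_{n+1})-\mathcal{L}(x_{n+1},\phi)=[f(x)-f(x_{n+1})]+[\phi_{n+1}(x)-\phi(x_{n+1})]+[g^*_\Phi(\phi)-g^*_\Phi(\phi_{n+1})]$ from below. Convexity of $g^*_\Phi$ on $\mathbb{R}\times X$ and \eqref{eq: alg2 update g*}, via the three-point identity for $\langle\cdot,\cdot\rangle_\Phi$, give $\tfrac1{2\tau}\big(\Vert\phi_n-\phi_{n+1}\Vert_\Phi^2+\Vert\phi-\phi_{n+1}\Vert_\Phi^2-\Vert\phi-\phi_n\Vert_\Phi^2\big)+\langle 2\bar{x}_n-\bar{x}_{n-1},\phi-\phi_{n+1}\rangle_\Phi$; the inclusion \eqref{eq: alg2 update f} with the explicit form $(\bar{a}_n,(\tfrac1\sigma+2\bar{a}_n)x_n)\in J_\sigma(x_n)$ gives, by the same algebra as in Theorem~\ref{thm: pd lsc prox convergence} (where the stray $(\bar{a}_{n+1}-\bar{a}_n)\Vert x\Vert^2$ cancels), the block $(\tfrac1{2\sigma}+\bar{a}_{n+1})\Vert x-x_{n+1}\Vert^2+(\tfrac1{2\sigma}+\bar{a}_n)\big(\Vert x_n-x_{n+1}\Vert^2-\Vert x-x_n\Vert^2\big)-\phi_{n+1}(x)+\phi_{n+1}(x_{n+1})$. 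Recombining the $\phi_{n+1}(\cdot)$ terms as in the Chambolle--Pock rearrangement turns the coupling into $\langle 2\bar{x}_n-\bar{x}_{n-1}-\bar{x}_{n+1},\phi-\phi_{n+1}\rangle_\Phi$, and, writing $2\bar{x}_n-\bar{x}_{n-1}-\bar{x}_{n+1}=(\bar{x}_n-\bar{x}_{n+1})-(\bar{x}_{n-1}-\bar{x}_n)$ and splitting off $\langle\bar{x}_{n-1}-\bar{x}_n,\phi_n-\phi_{n+1}\rangle_\Phi$, two pieces telescope and the residual $(\Vert x_{n-1}\Vert^2-\Vert x_n\Vert^2)(a_n-a_{n+1})-\langle x_{n-1}-x_n,u_n-u_{n+1}\rangle$ remains. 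The first summand I would rewrite \emph{exactly} by completing the square as $\tfrac{\sqrt{\sigma\tau}}2\big(\tfrac{a_n-a_{n+1}}{\sqrt\tau}+\tfrac{\Vert x_{n-1}\Vert^2-\Vert x_n\Vert^2}{\sqrt\sigma}\big)^2-\tfrac{\sqrt{\sigma\tau}}{2\tau}(a_n-a_{n+1})^2-\tfrac{\sqrt{\sigma\tau}}{2\sigma}\big(\Vert x_{n-1}\Vert^2-\Vert x_n\Vert^2\big)^2$, the second by a weighted Young inequality; using $\big(\Vert x_{n-1}\Vert^2-\Vert x_n\Vert^2\big)^2\le\big(\Vert x_{n-1}\Vert+\Vert x_n\Vert\big)^2\Vert x_{n-1}-x_n\Vert^2$, the negative pieces get absorbed into $\tfrac1{2\tau}\Vert\phi_n-\phi_{n+1}\Vert_\Phi^2$ and into $(\tfrac1{2\sigma}+\bar{a}_{n-1})\Vert x_{n-1}-x_n\Vert^2$, which is how the coefficient $1-\sqrt{\sigma\tau}+2\sigma\bar{a}_i-\sqrt{\sigma\tau}(\Vert x_i\Vert+\Vert x_{i+1}\Vert)^2$ and the leftover $\tfrac{1-\sqrt{\sigma\tau}}{2\tau}\Vert\phi_i-\phi_{i+1}\Vert_\Phi^2$ in \eqref{eq: full lsc lagrange summable final form} appear. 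Summing over $n=0,\dots,N$ (the $n=0$ residual vanishing because $x_{-1}=x_0$) and treating the leftover boundary coupling $\langle\bar{x}_N-\bar{x}_{N+1},\phi-\phi_{N+1}\rangle_\Phi$ in the same manner yields \eqref{eq: full lsc lagrange summable final form}; this bookkeeping is the technical heart of the proof.

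For part (ii), evaluate \eqref{eq: full lsc lagrange summable final form} at a saddle point $(\hat{x},\hat{\phi})$ of \eqref{prob: DCP L=Id} (existence assumed, as in the convex case): the left-hand side is then $\le 0$ by the saddle inequalities, and since $\sqrt{\sigma\tau}<1$ and, by \eqref{eq: alg2 ass boundedness}, each coefficient $1-\sqrt{\sigma\tau}+2\sigma\bar{a}_i-\sqrt{\sigma\tau}(\Vert x_i\Vert+\Vert x_{i+1}\Vert)^2$ is nonnegative, every summed term and both squared terms on the right are $\ge 0$; hence $(\tfrac1{2\sigma}+\bar{a}_{N+1})\Vert\hat{x}-x_{N+1}\Vert^2+\tfrac{1-\sqrt{\sigma\tau}}{2\tau}\Vert\hat{\phi}-\phi_{N+1}\Vert_\Phi^2\le(\tfrac1{2\sigma}+\bar{a}_0)\Vert\hat{x}-x_0\Vert^2+\tfrac1{2\tau}\Vert\hat{\phi}-\phi_0\Vert_\Phi^2$ uniformly in $N$, and since $1+2\sigma\bar{a}_{N+1}>\sqrt{\sigma\tau}>0$ this forces $(x_n),(\phi_n)$ to be bounded. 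For part (iii), with $X$ finite dimensional, discarding the nonnegative unsummed terms in \eqref{eq: full lsc lagrange summable final form} at $(\hat{x},\hat{\phi})$ gives $\sum_n\Vert\phi_n-\phi_{n+1}\Vert_\Phi^2<\infty$ and $\sum_n\big(1-\sqrt{\sigma\tau}+2\sigma\bar{a}_n-\sqrt{\sigma\tau}(\Vert x_n\Vert+\Vert x_{n+1}\Vert)^2\big)\Vert x_n-x_{n+1}\Vert^2<\infty$; the former yields $\Vert\phi_n-\phi_{n+1}\Vert_\Phi\to0$ (hence $\Vert u_n-u_{n+1}\Vert\to0$), and combining boundedness of $(x_n)$ from (ii) with the hypothesis $\bar{a}_n-\bar{a}_{n+1}\to0$ keeps the coefficient in the latter sum bounded away from $0$, so $\Vert x_n-x_{n+1}\Vert\to0$. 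By finite dimensionality pick a subsequence $(x_{n_k},\phi_{n_k})\to(\bar{x},\bar{\phi})$; then $x_{n_k+1}\to\bar{x}$ and $\phi_{n_k+1}\to\bar{\phi}$, and passing to the limit in \eqref{eq: alg2 update g*} and \eqref{eq: alg2 update f} — using lower semicontinuity of $g^*_\Phi$ and of $f$ (automatic, being $\Phi_{lsc}^\mathbb{R}$-convex, hence suprema of continuous quadratics) and $\bar{a}_n-\bar{a}_{n+1}\to0$, $\Vert x_n-x_{n+1}\Vert\to0$ to drop the vanishing terms — gives $(-\Vert\bar{x}\Vert^2,\bar{x})\in\partial g^*_\Phi(\bar{\phi})$ and $-\bar{\phi}\in\partial_{lsc}^\mathbb{R}f(\bar{x})$, i.e.\ $(\bar{x},\bar{\phi})$ satisfies \eqref{eq: kkt full lsc L=Id}. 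Finally, to promote the subsequence to the whole sequence, re-sum the per-iteration estimate from $n_k$ to $N$ with reference point $(\bar{x},\bar{\phi})$ and let $k,N\to\infty$ exactly as in the proof of Theorem~\ref{thm: pd lsc prox convergence}\,(iii), so that the Fej\'er-type quantity $(\tfrac1{2\sigma}+\bar{a}_{N+1})\Vert\bar{x}-x_{N+1}\Vert^2+\tfrac1{2\tau}\Vert\bar{\phi}-\phi_{N+1}\Vert_\Phi^2$ (plus vanishing cross terms) is driven to $0$.

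The step I expect to be the main obstacle is part (i): arranging the nonlinear coupling so that the $\Vert x_n\Vert^2$-terms reassemble into the \emph{exact} squares and into the \emph{exact} coefficient of $\Vert x_i-x_{i+1}\Vert^2$ appearing in \eqref{eq: full lsc lagrange summable final form}. After that, the obstacles in (ii)--(iii) are mild and familiar — the one point requiring care is keeping $1-\sqrt{\sigma\tau}+2\sigma\bar{a}_i-\sqrt{\sigma\tau}(\Vert x_i\Vert+\Vert x_{i+1}\Vert)^2$ uniformly positive (which is where boundedness from (ii) and $\bar{a}_n-\bar{a}_{n+1}\to0$ are used) so as to extract $\Vert x_n-x_{n+1}\Vert\to0$.
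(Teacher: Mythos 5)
Your proposal follows essentially the same route as the paper's proof: the same starting inclusions \eqref{eq: alg2 update g*}--\eqref{eq: alg2 update f}, the same telescoping decomposition of the nonlinear coupling term with residual $(a_n-a_{n+1})(\Vert x_{n-1}\Vert^2-\Vert x_n\Vert^2)+\langle u_{n+1}-u_n,x_{n-1}-x_n\rangle$, the same completion of squares and the bound $\left(\Vert x_{i}\Vert^2-\Vert x_{i+1}\Vert^2\right)^2\le(\Vert x_i\Vert+\Vert x_{i+1}\Vert)^2\Vert x_i-x_{i+1}\Vert^2$ producing the exact coefficients in \eqref{eq: full lsc lagrange summable final form}, and the same saddle-point evaluation and summability arguments for (ii)--(iii). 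The level of rigor matches (and in places exceeds) the paper's own, so no gap to report.
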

\begin{proof}
From Algorithm \eqref{alg: lsc Chambolle-Pock full L=Id} and \eqref{eq: alg2 update g*}, \eqref{eq: alg2 update f}, we have the estimation in term of Lagrange function 
\begin{align}
& \mathcal{L}\left(x,\phi_{n+1}\right)-\mathcal{L}\left(x_{n+1},\phi\right) \nonumber\\
& \geq\left(\frac{1}{2\sigma}+\bar{a}_{n+1}\right)\left\Vert x-x_{n+1}\right\Vert ^{2}+\left(\frac{1}{2\sigma}+\bar{a}_{n}\right)\left(\left\Vert x_{n}-x_{n+1}\right\Vert ^{2}-\left\Vert x-x_{n}\right\Vert ^{2}\right)\nonumber\\
 & +\frac{1}{2\tau}\left(\left\Vert \phi-\phi_{n+1}\right\Vert _{\Phi}^{2}-\left\Vert \phi-\phi_{n}\right\Vert _{\Phi}^{2}+\left\Vert \phi_{n}-\phi_{n+1}\right\Vert _{\Phi}^{2}\right) \nonumber\\
 & +\phi_{n+1}\left(x\right)-\phi\left(x_{n+1}\right) + \phi_{n+1} (x_{n+1})-\phi_{n+1} (x) \nonumber\\
 & +2\phi\left(x_{n}\right)-\phi\left(x_{n-1}\right)-2\phi_{n+1}\left(x_{n}\right)+\phi_{n+1}\left(x_{n-1}\right).
 \label{eq: PD full lsc lagrange estimate}
\end{align}

% First term 
% \begin{align*}
% -\left\langle u_{n+1}-2a_{n+1}x_{n},x-x_{n+1}\right\rangle  & =-\left\langle u_{n+1}-2a_{n+1}x_{n+1},x-x_{n+1}\right\rangle +2a_{n+1}\left\langle x_{n}-x_{n+1},x-x_{n+1}\right\rangle \\
%  & =\phi_{n+1}\left(x_{n+1}\right)-\phi_{n+1}\left(x\right)-a_{n+1}\left\Vert x-x_{n+1}\right\Vert ^{2}\\
%  & +a_{n+1}\left(\left\Vert x-x_{n+1}\right\Vert ^{2}-\left\Vert x-x_{n}\right\Vert ^{2}+\left\Vert x_{n}-x_{n+1}\right\Vert ^{2}\right)\\
%  & =\phi_{n+1}\left(x_{n+1}\right)-\phi_{n+1}\left(x\right)+a_{n+1}\left(\left\Vert x_{n}-x_{n+1}\right\Vert ^{2}-\left\Vert x-x_{n}\right\Vert ^{2}\right).
% \end{align*}
The last two lines of the above estimate can be written as
\begin{align}
& \left[\left(\phi_{n+1}-\phi\right)\left(x_{n+1}\right)-\left(\phi_{n+1}-\phi\right)\left(x_{n}\right)\right]-\left[\left(\phi_{n}-\phi\right)\left(x_{n}\right)-\left(\phi_{n}-\phi\right)\left(x_{n-1}\right)\right] \nonumber\\
& +\left[\left(\phi_{n+1}-\phi_{n}\right)\left(x_{n-1}\right)-\left(\phi_{n+1}-\phi_{n}\right)\left(x_{n}\right)\right].
\label{eq: PD full lsc auxiliary terms}
\end{align}
We analyze the first bracket of \eqref{eq: PD full lsc auxiliary terms}
\begin{align}
& \left[\left(\phi_{n+1}-\phi\right)\left(x_{n+1}\right)-\left(\phi_{n+1}-\phi\right)\left(x_{n}\right)\right]	\nonumber\\
&=\left(a-a_{n+1}\right)\left(\left\Vert x_{n+1}\right\Vert ^{2}-\left\Vert x_{n}\right\Vert ^{2}\right)+\left\langle u_{n+1}-u,x_{n+1}-x_{n}\right\rangle \nonumber\\
&\geq\left(a-a_{n+1}\right)\left(\left\Vert x_{n+1}\right\Vert ^{2}-\left\Vert x_{n}\right\Vert ^{2}\right)-\frac{\sqrt{\sigma\tau}}{2\tau}\left\Vert u-u_{n+1}\right\Vert ^{2}-\frac{\sqrt{\sigma\tau}}{2\sigma}\left\Vert x_{n+1}-x_{n}\right\Vert ^{2}.
\label{eq: PD full lsc aux term estimate}
\end{align}

Let us take the sum of \eqref{eq: PD full lsc lagrange estimate} from $0$ till $N$, reminding $x_{-1} =x_0$ and using \eqref{eq: PD full lsc aux term estimate}, we obtain
\begin{align}
 & \sum_{i=0}^{N}\mathcal{L}\left(x,\phi_{i+1}\right)-\mathcal{L}\left(x_{i+1},\phi\right) \nonumber\\
\geq & \left(\frac{1}{2\sigma}+\bar{a}_{N+1}\right)\left\Vert x-x_{N+1}\right\Vert ^{2}-\left(\frac{1}{2\sigma}+\bar{a}_{0}\right)\left\Vert x-x_{0}\right\Vert ^{2} +\frac{1}{2\tau}\left(\left\Vert \phi-\phi_{N+1}\right\Vert ^{2}_\Phi -\left\Vert \phi-\phi_{0}\right\Vert ^{2}_\Phi\right) \nonumber\\
 &+ \left(a-a_{N+1}\right)\left(\left\Vert x_{N+1}\right\Vert ^{2}-\left\Vert x_{N}\right\Vert ^{2}\right)-\frac{\sqrt{\sigma\tau}}{2\tau}\left\Vert u-u_{N+1}\right\Vert ^{2}-\frac{\sqrt{\sigma\tau}}{2\sigma}\left\Vert x_{N+1}-x_{N}\right\Vert ^{2} \nonumber\\
 & +\sum_{i=0}^{N}\left(\frac{1}{2\sigma}+\bar{a}_{i}\right)\left\Vert x_{i}-x_{i+1}\right\Vert ^{2}+\frac{1}{2\tau}\left\Vert \phi_{i}-\phi_{i+1}\right\Vert ^{2}_\Phi + \left[\left(\phi_{i+1}-\phi_{i}\right)\left(x_{i-1}\right)-\left(\phi_{i+1}-\phi_{i}\right)\left(x_{i}\right)\right].
 \label{eq: PD full lsc lagrange sum}
\end{align}
We expand the term $\Vert \phi -\phi_{N+1}\Vert_{\Phi}^2 = (a-a_{N+1})^2 +\Vert u-u_{N+1}\Vert^2$ and simplify with the third line of \eqref{eq: PD full lsc lagrange sum}.
\begin{align}
 & \Vert \phi -\phi_{N+1}\Vert_{\Phi}^2 + \left(a-a_{N+1}\right)\left(\left\Vert x_{N+1}\right\Vert ^{2}-\left\Vert x_{N}\right\Vert ^{2}\right)-\frac{\sqrt{\sigma\tau}}{2\tau}\left\Vert u-u_{N+1}\right\Vert ^{2}-\frac{\sqrt{\sigma\tau}}{2\sigma}\left\Vert x_{N+1}-x_{N}\right\Vert ^{2} \nonumber\\
 & = \frac{\sqrt{\sigma\tau}}{2}\left( \frac{a-a_{N+1}}{\sqrt{\tau}} + \frac{\left\Vert x_{N+1}\right\Vert ^{2}-\left\Vert x_{N}\right\Vert ^{2}}{\sqrt{\sigma}} \right)^{2}-\frac{\sqrt{\sigma\tau}}{2\sigma}\left(\left\Vert x_{N+1}\right\Vert ^{2}-\left\Vert x_{N}\right\Vert ^{2}\right)^{2} \nonumber\\
 & +\frac{1-\sqrt{\sigma\tau}}{2\tau} \Vert \phi-\phi_{N+1}\Vert^2_\Phi -\frac{\sqrt{\sigma\tau}}{2\sigma}\left\Vert x_{N+1}-x_{N}\right\Vert ^{2}
\label{eq: PD full lsc phi-phiN+1}
\end{align}
Consider 
\[
\left(\left\Vert x_{N+1}\right\Vert ^{2}-\left\Vert x_{N}\right\Vert ^{2}\right)^{2} = (\left\Vert x_{N+1}\right\Vert +\left\Vert x_{N}\right\Vert)^2 (\left\Vert x_{N+1}\right\Vert -\left\Vert x_{N}\right\Vert )^2 \leq 
(\left\Vert x_{N+1}\right\Vert +\left\Vert x_{N}\right\Vert)^2  \Vert x_{N+1}-x_N\Vert^2.
\]
Doing the same for $\left(\phi_{i+1}-\phi_{i}\right)\left(x_{i-1}\right)-\left(\phi_{i+1}-\phi_{i}\right)\left(x_{i}\right)$ in the summation, we obtain
\begin{align}
 & \sum_{i=0}^{N}\mathcal{L}\left(x,\phi_{i+1}\right)-\mathcal{L}\left(x_{i+1},\phi\right) \nonumber\\
\geq & \left(\frac{1}{2\sigma}+\bar{a}_{N+1}\right)\left\Vert x-x_{N+1}\right\Vert ^{2}-\left(\frac{1}{2\sigma}+\bar{a}_{0}\right)\left\Vert x-x_{0}\right\Vert ^{2} -\frac{1}{2\tau}\left\Vert \phi-\phi_{0}\right\Vert ^{2}_\Phi \nonumber\\
 &+ \frac{\sqrt{\sigma\tau}}{2}\left( \frac{a-a_{N+1}}{\sqrt{\tau}} + \frac{\left\Vert x_{N+1}\right\Vert ^{2}-\left\Vert x_{N}\right\Vert ^{2}}{\sqrt{\sigma}} \right)^{2} +\frac{1-\sqrt{\sigma\tau}}{2\tau} \Vert \phi-\phi_{N+1}\Vert^2_\Phi \nonumber\\
 & +\sum_{i=0}^{N} { \left(1-\sqrt{\sigma\tau}+2\sigma\bar{a}_{i} - \sqrt{\sigma\tau}(\Vert x_i\Vert+\Vert x_{i+1}\Vert)^2 \right) \frac{\left\Vert x_{i}-x_{i+1}\right\Vert ^{2}}{2\sigma}} \nonumber\\
 & +\frac{\sqrt{\sigma\tau}}{2}\left( \frac{a_i-a_{i+1}}{\sqrt{\tau}} + \frac{\left\Vert x_{i-1}\right\Vert ^{2}-\left\Vert x_{i}\right\Vert ^{2}}{\sqrt{\sigma}}\right)^{2} +\frac{1-\sqrt{\sigma\tau}}{2\tau} \Vert \phi_i-\phi_{i+1}\Vert^2_\Phi. 
\label{eq: full lsc lagrange summable final form proof-1}
\end{align}
We obtain \eqref{eq: full lsc lagrange summable final form} for all $(x,\phi)\in X\times \Phi_{lsc}^\mathbb{R}$.

For the second statement, letting $(x,\phi)=(x^*,\phi^*)$ be a saddle point in \eqref{eq: full lsc lagrange summable final form}, we have
\begin{align}
& 0 \geq \sum_{i=0}^{N}\mathcal{L}\left(x^*,\phi_{i+1}\right)-\mathcal{L}\left(x_{i+1},\phi^*\right) \nonumber\\
\geq & \left(\frac{1}{2\sigma}+\bar{a}_{N+1}\right)\left\Vert x^*-x_{N+1}\right\Vert ^{2}-\left(\frac{1}{2\sigma}+\bar{a}_{0}\right)\left\Vert x^*-x_{0}\right\Vert ^{2} -\frac{1}{2\tau}\left\Vert \phi^*-\phi_{0}\right\Vert ^{2}_\Phi \nonumber\\
 &+ \frac{\sqrt{\sigma\tau}}{2}\left( \frac{a^*-a_{N+1}}{\sqrt{\tau}} + \frac{\left\Vert x_{N+1}\right\Vert ^{2}-\left\Vert x_{N}\right\Vert ^{2}}{\sqrt{\sigma}} \right)^{2} +\frac{1-\sqrt{\sigma\tau}}{2\tau} \Vert \phi^*-\phi_{N+1}\Vert^2_\Phi \nonumber\\
 & +\sum_{i=0}^{N} { \left(1-\sqrt{\sigma\tau}+2\sigma\bar{a}_{i} - \sqrt{\sigma\tau}(\Vert x_i\Vert+\Vert x_{i+1}\Vert)^2 \right) \frac{\left\Vert x_{i}-x_{i+1}\right\Vert ^{2}}{2\sigma}}\nonumber\\
 & +\frac{\sqrt{\sigma\tau}}{2}\left( \frac{a_i-a_{i+1}}{\sqrt{\tau}} + \frac{\left\Vert x_{i-1}\right\Vert ^{2}-\left\Vert x_{i}\right\Vert ^{2}}{\sqrt{\sigma}}\right)^{2} +\frac{1-\sqrt{\sigma\tau}}{2\tau} \Vert \phi_i-\phi_{i+1}\Vert^2_\Phi. 
\nonumber
\end{align}
Thanks to assumption \eqref{eq: alg2 ass boundedness}, we can remove the summation to obtain
\begin{align*}
& \left(\frac{1}{2\sigma}+\bar{a}_{0}\right)\left\Vert x^*-x_{0}\right\Vert ^{2} +\frac{1}{2\tau}\left\Vert \phi^*-\phi_{0}\right\Vert ^{2}_\Phi \\
\geq &\left(\frac{1}{2\sigma}+\bar{a}_{N+1}\right)\left\Vert x^*-x_{N+1}\right\Vert ^{2} 
+\frac{1-\sqrt{\sigma\tau}}{2\tau} \Vert \phi^*-\phi_{N+1}\Vert^2_\Phi
% & + \frac{1}{2\tau}\left(a^*-a_{N+1} + \tau\left(\left\Vert x_{N+1}\right\Vert ^{2}-\left\Vert x_{N}\right\Vert ^{2}\right)\right)^{2}.
\end{align*}
The above inequality holds for any $N\in \mathbb{N}$, which implies that $(x_n,\phi_n)_{n\in\mathbb{N}}$ are bounded.

For the convergence of $(x_n,\phi_n)_{n\in\mathbb{N}}$, the proof has the same argument as of Theorem \ref{thm: pd lsc prox convergence}. We just need to prove that 
\[
x_n-x_{n+1} \to 0, \quad \phi_n -\phi_{n+1} \to 0.
\]
From \eqref{eq: full lsc lagrange summable final form} with $(x,\phi)=(x^*,\phi^*)$ being a saddle point, we infer
\begin{align}
& \left(\frac{1}{2\sigma}+\bar{a}_{0}\right)\left\Vert x^*-x_{0}\right\Vert ^{2} -\frac{1}{2\tau}\left\Vert \phi^*-\phi_{0}\right\Vert ^{2}_\Phi \nonumber\\
 & \geq\sum_{i=0}^{N} { \left(1-\sqrt{\sigma\tau}+2\sigma\bar{a}_{i} - \sqrt{\sigma\tau}(\Vert x_i\Vert+\Vert x_{i+1}\Vert)^2 \right) \frac{\left\Vert x_{i}-x_{i+1}\right\Vert ^{2}}{2\sigma}} + \nonumber\\
 & +\frac{\sqrt{\sigma\tau}}{2}\left( \frac{a_i-a_{i+1}}{\sqrt{\tau}} + \frac{\left\Vert x_{i-1}\right\Vert ^{2}-\left\Vert x_{i}\right\Vert ^{2}}{\sqrt{\sigma}}\right)^{2} +\frac{1-\sqrt{\sigma\tau}}{2\tau} \Vert \phi_i-\phi_{i+1}\Vert^2_\Phi.  \nonumber
% \label{eq: alg2 proof convergence}
\end{align}
This holds for any $N\in\mathbb{N}$, we can let $N\to \infty$ to obtain 
\begin{align}
\label{eq: alg2 proof summable x}
& { \left(1-\sqrt{\sigma\tau}+2\sigma\bar{a}_{i} - \sqrt{\sigma\tau}(\Vert x_i\Vert+\Vert x_{i+1}\Vert)^2 \right) \frac{\left\Vert x_{i}-x_{i+1}\right\Vert ^{2}}{2\sigma}} \to 0,
 \\
\label{eq: alg2 proof summable x a}
& \left( \frac{a_i-a_{i+1}}{\sqrt{\tau}} + \frac{\left\Vert x_{i-1}\right\Vert ^{2}-\left\Vert x_{i}\right\Vert ^{2}}{\sqrt{\sigma}}\right)^{2}  \to 0, \\
& \Vert \phi_i-\phi_{i+1}\Vert^2_\Phi \to 0.
\label{eq: alg2 proof summable u}
\end{align}
We have $\phi_i-\phi_{i+1} \to 0$ from \eqref{eq: alg2 proof summable u},
and $x_i-x_{i+1}\to 0$ from the assumptions and \eqref{eq: alg2 proof summable x a}, \eqref{eq: alg2 proof summable x}. The rest of the proof is analogous to the one in Theorem \ref{thm: pd lsc prox convergence}.
\end{proof}

\begin{remark}
When $g$ is convex, we can consider $g^*_\Phi$ as $g^*$ and Algorithm \ref{alg: lsc Chambolle-Pock full L=Id} is exactly Algorithm \ref{alg: lsc Chambolle-Pock} and the assumption \eqref{eq: alg2 ass boundedness} can be relaxed to obtain the convergence.
The appearance of the forth-order terms in \eqref{eq: PD full lsc phi-phiN+1} and the additional assumption \eqref{eq: alg2 ass boundedness} come from $\phi_n$. For the case $g$ is convex, the assumption \eqref{eq: alg2 ass boundedness} is not needed for the boundedness of the iterates as in Theorem \ref{thm: pd lsc prox convergence}.
\end{remark}

\begin{remark}
    One important aspect of convergence of Algorithm \ref{alg: lsc Chambolle-Pock full L=Id} is that we need to control the sequence $(\bar{a}_n)_{n\in\mathbb{N}}$. Compare to Theorem \ref{thm: pd lsc prox convergence}, we need to control both $(x_n)_{n\in\mathbb{N}}$ and $(\bar{a}_n)_{n\in\mathbb{N}}$ for \eqref{eq: alg2 ass boundedness} to hold. Notice the RHS of Assumption \eqref{eq: alg2 ass boundedness} is of order forth, so it can be satisfied if $x_{n+1}$ is not too far from $x_n$. We can add it as a restriction to Algorithm \ref{alg: lsc Chambolle-Pock full L=Id}. Overall, the assumptions of Theorem \ref{thm: pd full lsc prox convergence}-(\ref{thm: full lsc convergence-2},\ref{thm: full lsc convergence-3}) are the trade-offs between a bi-linear and nonlinear, nonconvex coupling term $\phi(x)$.
\end{remark}

\begin{remark}
    The structure of $\Phi_{lsc}^\mathbb{R}$ makes the coupling term $\phi(x)$ of the Lagrangian $\mathcal{L}(x,\phi)$ in \eqref{prob: DCP L=Id} nonlinear in $x\in X$ and linear in $\phi =(a,u)\in \Phi_{lsc}^\mathbb{R}$. It may or may not be nonconvex in $x$ and also locally and not globally Lipschitz continuous in $x$. 
    Therefore, problem \eqref{prob: DCP L=Id} is more general than the one considered in \cite{hamedani2021primal} as well as in \cite{clason2021primal}. Moreover, we obtain convergence in Theorem \ref{thm: pd full lsc prox convergence} by restricting the sequences $(a_n)_{n\in\mathbb{N}}$ and $(\bar{a}_n)_{n\in\mathbb{N}}$ (assumptions \eqref{eq: alg2 ass boundedness} and \ref{thm: full lsc convergence-3}) without using additional conditions like H\"older error bound \cite{johnstone2020faster,cuong2022error} or K\L-inequality \cite{attouch2013convergence}.
\end{remark}
\subsection{Using \texorpdfstring{$\Phi_{lsc}^\mathbb{R} $}--Subdifferentials for conjugate function}
In this subsection, we attempt to design primal-dual algorithm using abstract subdifferentials \eqref{eq: lsc subgrad conj} on the conjugate function $g^*_\Phi(\phi)$.
% Let us formally state the definition.
% \begin{definition}
% \label{def: lsc subgrad conjugate}
% Let $X$ be a Hilbert space, $h :\Phi_{lsc}^\mathbb{R} \to (-\infty,+\infty]$ be a proper lsc function. A $\Phi_{lsc}^\mathbb{R}$-subgradient of $h$ at $\phi_0\in \Phi_{lsc}^\mathbb{R}$ is an element $x_0\in X$ such that
% \begin{equation}
%     (\forall \phi\in \Phi_{lsc}^\mathbb{R}) \quad h (\phi)-h (\phi_0) \geq \phi(x_0) -\phi_0(x_0).
%     \label{eq: lsc subgrad for conjugate}
% \end{equation}
% We denote $\partial_\Phi^\mathbb{R} h (\phi_0)$ as the collection of all $\Phi_{lsc}^\mathbb{R}$-subgradient of $h$ at $\phi_0$.
% \end{definition}
Since we consider $\phi$ as a variable, we can write $\phi (x) = x(\phi)$ as a function of $\phi\in \Phi_{lsc}^\mathbb{R}$. This notation will be used for presentation convenience below.

Using Definition \ref{def:lsc-subdiff}, we propose the following algorithm

% \begin{algorithm}[H]\caption{$\lsc^\R$-Primal-Dual $3$}\label{alg: lsc pd prox conjugate}
% \textbf{Initialize:} Choose $\tau,\sigma >0, \sigma\tau<1$ and $\bar{a}_0>(\sqrt{\sigma\tau}-1)/2\sigma$. Starting $(x_0,\phi_0)\in X\times \Phi_{lsc}^\mathbb{R}$ and $x_0=x_{-1}$.\\
% {\normalfont\textbf{Update: }}{
% For $n\in\N$,
% \begin{itemize}
%     \item $\phi_{n+1}  \in \text{ProxConjugate}(g^*_\Phi - \bar{x}_n,\phi_n,\tau)$
%     \item Primal update:
% \begin{itemize}
%         \item[$\blacksquare$] Pick $\left( \bar{a}_n, (\frac{1}{\sigma}+2\bar{a}_n)x_n\right) \in J_\sigma (x_n)$ according to \eqref{eq: J_gamma form}
%         \item[$\blacksquare$] Pick $x_{n+1} \in \argmind{z\in X}{ f(z) + \phi_{n+1} (z)+\left( \frac{1}{2\sigma} +\bar{a}_n\right) \Vert z-x_n\Vert^2}$
%     \end{itemize}
% % \item $\bar{x}_{n+1} = 2x_{n+1} -x_{n}$ 
% \end{itemize}}
% \end{algorithm}
% \vspace{0.5cm}

\begin{algorithm}[H]\caption{$\lsc^\R$-Chambolle-Pock Algorithm}\label{alg: lsc Chambolle-Pock full L=Id v2}
\textbf{Initialize:} Choose $\tau,\sigma >0, \sigma\tau<1$ and $\bar{a}_0>(\sqrt{\sigma\tau}-1)/2\sigma$. Starting $(x_0,\phi_0)\in X\times \Phi_{lsc}^\mathbb{R}$ and $x_0=x_{-1}$.\\
{\normalfont\textbf{Update: }}{
For $n\in\N$,
\begin{itemize}
    \item Find $\phi_{n+1}=(a_{n+1},u_{n+1})$ such that 
    \begin{align}
    \label{eq: proximal update conjugate v2}
    \frac{u_n -u_{n+1}}{\gamma} &\in \partial_{\Phi}^\mathbb{R} (g^*_\Phi - \langle \cdot, (-2\Vert x_n\Vert^2+\Vert x_{n-1}\Vert^2, 2x_n-x_{n-1})\rangle_\Phi) (\phi_{n+1}), \\
    \left\Vert u_{n}-u_{n+1}\right\Vert ^{2} &\leq a_{n+1}-a_{n}.\nonumber
    \end{align}
    \item \textbf{If} (there is no such $\phi_{n+1}$) \textbf{or} ($\bar{a}_n<-1/2\sigma$), \textbf{Stop the algorithm}.
    \item We update the primal step as follow:
\begin{itemize}
        \item[$\blacksquare$] Pick $\left( \bar{a}_n, (\frac{1}{\sigma}+2\bar{a}_n)x_n\right) \in J_\sigma (x_n)$ according to \eqref{eq: J_gamma form}
        \item[$\blacksquare$] Pick $x_{n+1} \in \argmind{z\in X}{ f(z) + \phi_{n+1} (z)+\left( \frac{1}{2\sigma} +\bar{a}_n\right) \Vert z-x_n\Vert^2}$
    \end{itemize}
% \item $\bar{x}_{n+1} = 2x_{n+1} -x_{n}$ 
\end{itemize}}
\end{algorithm}
\vspace{0.5cm}
The dual update comes from Algorithm \ref{alg: lsc prox conjugate v1}, where we build an algorithmic update using $\Phi_{lsc}^\mathbb{R}$-subdifferentials.
% into $\Phi_{lsc}^\mathbb{R}$-primal-dual algorithm \ref{alg: lsc Chambolle-Pock full L=Id} and introduce the following algorithm.
Let us come to the convergence results, which are similar to Theorem \ref{thm: pd full lsc prox convergence}.

\begin{theorem}
\label{thm: pd full lsc prox convergence v1}
Let $f,g:X\to (-\infty,+\infty]$ be a proper $\lsc^\R$-convex. Let $(x_n)_{n\in\N},(\phi_n)_{n\in\N}, (\bar{x}_{n})_{n\in\N}$ and $(\bar{a}_n)_{n\in\N}$ be the sequences generated by Algorithm \ref{alg: lsc Chambolle-Pock full L=Id v2}. Assume that $\tau\sigma <1$ and $1+2\sigma \bar{a}_n > \sqrt{\sigma \tau}$ for all $n\in \N$, then we have
\begin{enumerate}[label=\roman*.]
    \item For any $(x,\phi)\in X\times \Phi_{lsc}^\mathbb{R}$, we have
    \begin{align}
 & \sum_{i=0}^{N}\mathcal{L}\left(x,\phi_{i+1}\right)-\mathcal{L}\left(x_{i+1},\phi\right) \nonumber\\
\geq & \left(\frac{1}{2\sigma}+\bar{a}_{N+1}\right)\left\Vert x-x_{N+1}\right\Vert ^{2}-\left(\frac{1}{2\sigma}+\bar{a}_{0}\right)\left\Vert x-x_{0}\right\Vert ^{2} \nonumber\\
& +\frac{1-\tau\sqrt{\sigma\tau}}{2\tau^{2}}\left(a-a_{N+1}\right)^{2}-\frac{1}{2\tau^{2}}\left(a-a_{0}\right)^{2}+\frac{1-\sqrt{\sigma\tau}}{2\tau} \left\Vert u-u_{N+1}\right\Vert ^{2}- \frac{1}{2\tau}\left\Vert u-u_{0}\right\Vert ^{2} \nonumber\\
 &+ \frac{\sqrt{\sigma\tau}}{2}\left( \frac{a-a_{N+1}}{\sqrt{\tau}} + \frac{\left\Vert x_{N+1}\right\Vert ^{2}-\left\Vert x_{N}\right\Vert ^{2}}{\sqrt{\sigma}} \right)^{2}  \nonumber\\
 & +\sum_{i=0}^{N}\left(\frac{1-\sqrt{\sigma\tau}}{2\sigma}+\bar{a}_{i}\right)\left\Vert x_{i}-x_{i+1}\right\Vert ^{2} 
 -\frac{\sqrt{\sigma\tau}}{2\sigma}\left(\left\Vert x_{i}\right\Vert ^{2}-\left\Vert x_{i+1}\right\Vert ^{2}\right)^{2} \nonumber\\
 & +\frac{\sqrt{\sigma\tau}}{2}\left( \frac{a_i-a_{i+1}}{\sqrt{\tau}} + \frac{\left\Vert x_{i-1}\right\Vert ^{2}-\left\Vert x_{i}\right\Vert ^{2}}{\sqrt{\sigma}}\right)^{2} +\frac{1-\sqrt{\sigma\tau}}{2\tau} \Vert u_i-u_{i+1}\Vert^2 \nonumber \\
    & +\frac{1}{2\tau^2}\left\Vert u_{n}-u_{n+1}\right\Vert ^{4} - \frac{\sqrt{\sigma\tau}}{2\tau} (a_i-a_{i+1})^2.
\label{eq: full lsc lagrange summable final form v1}
\end{align}
    where $\mathcal{L} (x,\phi) = f(x)+ \phi(x) -g^*_\Phi (\phi)$ is the Lagrangian.
    \item If the sequences $(x_n)_{n\in\mathbb{N}}$ and $\bar{a}_{n\in\mathbb{N}}$ are generated such that
    \begin{align}
    \label{eq: alg2 ass boundedness v1}
    % \left(\frac{1-\sqrt{\sigma\tau}}{2\sigma}+\bar{a}_{n}\right)\left\Vert x_{n}-x_{n+1}\right\Vert ^{2} 
    % & \geq \frac{\sqrt{\sigma\tau}}{2\sigma}\left(\left\Vert x_{n}\right\Vert ^{2}-\left\Vert x_{n+1}\right\Vert ^{2}\right)^{2},\\
    \frac{1+2\sigma\bar{a}_{n}}{\sqrt{\sigma\tau}} &\geq 1+(\Vert x_n\Vert+\Vert x_{n+1}\Vert)^2\\ ,
    \frac{1}{\tau}\left\Vert u_{n}-u_{n+1}\right\Vert ^{4} &\geq  \sqrt{\sigma\tau} (a_i-a_{i+1})^2,
\end{align}
for all $n\in\mathbb{N}$. Then the sequences $(x_n,\phi_n)_{n\in\N}$ are bounded.
    \item In addition, if $X$ is finite dimensional, $(\bar{a}_n)_{n\in\N}$ converges in the way that 
    \[\lim_{n\to\infty} (1-\sqrt{\sigma\tau}+2\sigma \bar{a}_i) >0, \]
    then $(x_n,\phi_n)_{n\in\N}$ converge to a saddle point in the sense of \eqref{eq: kkt condition lsc convex}.
\end{enumerate}
\end{theorem}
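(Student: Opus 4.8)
The plan is to reproduce, with the obvious modifications, the three-step argument used for Theorem~\ref{thm: pd full lsc prox convergence}; the only structural change is that on the dual side the convex subgradient inequality is replaced by the $\Phi_{lsc}^\mathbb{R}$-subgradient inequality \eqref{eq: lsc subgrad conj}, together with the auxiliary constraint $\|u_n-u_{n+1}\|^2\le a_{n+1}-a_n$ built into Algorithm~\ref{alg: lsc Chambolle-Pock full L=Id v2}. First I would record the two defining inclusions of one iteration: the dual inclusion \eqref{eq: proximal update conjugate v2}, and the primal inclusion $\bigl(\bar a_n-\bar a_{n+1},(\tfrac1\sigma+2\bar a_n)x_n-(\tfrac1\sigma+2\bar a_{n+1})x_{n+1}\bigr)-\phi_{n+1}\in\partial_{lsc}^\mathbb{R}f(x_{n+1})$ coming from the $\argmin$ defining $x_{n+1}$ (exactly as \eqref{eq: alg2 update f}). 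Testing the primal inclusion at an arbitrary $x\in X$ and the dual $\Phi_{lsc}^\mathbb{R}$-subgradient inequality at an arbitrary $\phi=(a,u)\in\Phi_{lsc}^\mathbb{R}$, and adding, yields a one-step estimate for $\mathcal{L}(x,\phi_{n+1})-\mathcal{L}(x_{n+1},\phi)$ of the same shape as \eqref{eq: PD full lsc lagrange estimate}; the new ingredient is that the $a$-component of the dual term contributes $\tfrac{1}{2\tau^2}\bigl[(a-a_n)^2-(a-a_{n+1})^2\bigr]$ plus the quartic remainder $\tfrac{1}{2\tau^2}\|u_n-u_{n+1}\|^4$ (obtained by inserting $\|u_n-u_{n+1}\|^2\le a_{n+1}-a_n$), in place of the plain $\tfrac{1}{2\tau}\|\phi-\phi_n\|_\Phi^2$ telescope.

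Second, I would sum the one-step estimate over $i=0,\dots,N$ (recalling $x_{-1}=x_0$). The extrapolation cross-terms carried by the shift $(-2\|x_n\|^2+\|x_{n-1}\|^2,\,2x_n-x_{n-1})$ telescope exactly as in \eqref{eq: PD full lsc auxiliary terms}--\eqref{eq: PD full lsc aux term estimate}; the surviving boundary cross-term is split by Young's inequality with weight $\sqrt{\sigma\tau}$; and the mixed $a$/$\|x\|^2$ contributions are reorganized by completing the square in $\bigl(\tfrac{a-a_{N+1}}{\sqrt\tau},\tfrac{\|x_{N+1}\|^2-\|x_N\|^2}{\sqrt\sigma}\bigr)$ (and analogously inside the sum), using $(\|x_i\|^2-\|x_{i+1}\|^2)^2\le(\|x_i\|+\|x_{i+1}\|)^2\|x_i-x_{i+1}\|^2$ to push the fourth-order terms into the coefficient of $\|x_i-x_{i+1}\|^2$. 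Collecting the pieces gives \eqref{eq: full lsc lagrange summable final form v1}; this is the bookkeeping-heavy but essentially mechanical part.

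For (ii), substitute a saddle point $(x,\phi)=(x^*,\phi^*)=(x^*,(a^*,u^*))\in S$ into \eqref{eq: full lsc lagrange summable final form v1}: by \eqref{eq: saddle point def} the left-hand side is $\le 0$, and the two conditions in \eqref{eq: alg2 ass boundedness v1} are tailored so that the generic summand $\bigl(\tfrac{1-\sqrt{\sigma\tau}}{2\sigma}+\bar a_i-\tfrac{\sqrt{\sigma\tau}}{2\sigma}(\|x_i\|+\|x_{i+1}\|)^2\bigr)\|x_i-x_{i+1}\|^2$ and the pair $\tfrac{1}{2\tau^2}\|u_i-u_{i+1}\|^4-\tfrac{\sqrt{\sigma\tau}}{2\tau}(a_i-a_{i+1})^2$ are nonnegative and hence discardable. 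What remains is, for every $N$, a bound of the form $\bigl(\tfrac{1}{2\sigma}+\bar a_0\bigr)\|x^*-x_0\|^2+\tfrac{1}{2\tau^2}(a^*-a_0)^2+\tfrac{1}{2\tau}\|u^*-u_0\|^2\ge\bigl(\tfrac{1}{2\sigma}+\bar a_{N+1}\bigr)\|x^*-x_{N+1}\|^2+\tfrac{1-\sqrt{\sigma\tau}}{2\tau}\|u^*-u_{N+1}\|^2$, and since $1+2\sigma\bar a_n>\sqrt{\sigma\tau}>0$ this bounds $(x_n)$ and $(u_n)$; boundedness of $(a_n)$ then follows from $\|u_n-u_{n+1}\|^2\le a_{n+1}-a_n$ summed against the same bound, so $(x_n,\phi_n)$ is bounded.

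For (iii), with $X$ finite dimensional and $\bar a_n$ convergent with $\lim_n(1-\sqrt{\sigma\tau}+2\sigma\bar a_n)>0$, letting $N\to\infty$ in \eqref{eq: full lsc lagrange summable final form v1} evaluated at a saddle point forces the series of the now strictly positive summands to converge, whence $\|x_i-x_{i+1}\|\to0$, $\|u_i-u_{i+1}\|\to0$, and, using $\bar a_i-\bar a_{i+1}\to0$ together with the convergence of the completed square $\bigl(\tfrac{a_i-a_{i+1}}{\sqrt\tau}+\tfrac{\|x_{i-1}\|^2-\|x_i\|^2}{\sqrt\sigma}\bigr)^2\to0$, also $a_i-a_{i+1}\to0$; thus $\phi_i-\phi_{i+1}\to0$. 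One then extracts a convergent subsequence $(x_{n_k},\phi_{n_k})\to(\overline x,\overline\phi)$, passes to the limit in the primal inclusion and in the dual inclusion \eqref{eq: proximal update conjugate v2} using lower semicontinuity of $f$ and of $g^*_\Phi$ (exactly as in the closing argument of the proof of Theorem~\ref{thm: pd lsc prox convergence}), obtaining $-\overline\phi\in\partial_{lsc}^\mathbb{R}f(\overline x)$ and $\overline x\in\partial_X g^*_\Phi(\overline\phi)$, i.e.\ the KKT condition \eqref{eq: kkt full lsc L=Id}; finally, running the same telescoped estimate from index $n_k$ onwards and letting $k\to\infty$ upgrades subsequential convergence to convergence of the whole sequence. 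The step I expect to be the main obstacle is precisely the algebra behind \eqref{eq: full lsc lagrange summable final form v1}: checking that, after the extrapolation telescoping and the two square completions, the only surviving terms are exactly those whose nonnegativity is secured by \eqref{eq: alg2 ass boundedness v1} --- in particular that the quartic $\|u_i-u_{i+1}\|^4$ produced by the abstract dual update pairs off against $\sqrt{\sigma\tau}(a_i-a_{i+1})^2$ with matching constants, and that the nonlinear $a$-component carried by the dual variable does not spoil the telescoping of the cross-terms.
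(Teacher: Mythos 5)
Your route is exactly the one the paper intends: the paper gives no details beyond ``derive it from the proof of Theorem \ref{thm: pd full lsc prox convergence}'', and your one-step dual estimate (telescoping of $\frac{1}{2\tau^2}(a-a_n)^2$ plus the quartic $\frac{1}{2\tau^2}\Vert u_n-u_{n+1}\Vert^4$, obtained from the $\Phi_{lsc}^\mathbb{R}$-subgradient inequality \eqref{eq: lsc subgrad conj} combined with the constraint $\Vert u_n-u_{n+1}\Vert^2\leq a_{n+1}-a_n$) is precisely the estimate \eqref{eq: prox conj v1 1st inequality} of the appendix with $\gamma=\tau$, and the primal side, the extrapolation telescoping, the Young step and the two square completions are carried over verbatim from the proof of Theorem \ref{thm: pd full lsc prox convergence}. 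You also correctly identify that the two conditions in \eqref{eq: alg2 ass boundedness v1} are exactly what makes the in-sum terms (the $x$-increment term and the pairing of $\frac{1}{2\tau^2}\Vert u_i-u_{i+1}\Vert^4$ against $\frac{\sqrt{\sigma\tau}}{2\tau}(a_i-a_{i+1})^2$) nonnegative.

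The one step that does not work as you wrote it is the boundedness of $(a_n)_{n\in\mathbb{N}}$ in part (ii). The constraint $\Vert u_n-u_{n+1}\Vert^2\leq a_{n+1}-a_n$ forces $(a_n)$ to be nondecreasing, so summing it only yields $a_{N+1}\geq a_0+\sum_{n=0}^{N}\Vert u_n-u_{n+1}\Vert^2$, i.e.\ a \emph{lower} bound; it cannot bound $a_n$ from above, which is what boundedness of $\phi_n=(a_n,u_n)$ requires (and, unlike in Theorem \ref{thm: pd full lsc prox convergence}, the retained dual boundary term here is only $\frac{1-\sqrt{\sigma\tau}}{2\tau}\Vert u^*-u_{N+1}\Vert^2$, with no full $\Vert\phi^*-\phi_{N+1}\Vert_\Phi^2$ to invoke). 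The fix is available inside \eqref{eq: full lsc lagrange summable final form v1}: keep the two boundary $a$-terms instead of discarding the completed square. Writing $s=a^*-a_{N+1}$ and $r=\Vert x_{N+1}\Vert^2-\Vert x_N\Vert^2$, one has $\frac{1-\tau\sqrt{\sigma\tau}}{2\tau^2}s^2+\frac{\sqrt{\sigma\tau}}{2}\bigl(\frac{s}{\sqrt{\tau}}+\frac{r}{\sqrt{\sigma}}\bigr)^2=\frac{1}{2\tau^2}s^2+sr+\frac{\sqrt{\sigma\tau}}{2\sigma}r^2$; once $(x_n)$ is known to be bounded, $r$ is bounded, and the upper bound on this quadratic expression in $s$ coming from the saddle-point inequality forces $|s|$, hence $(a_n)$, to be bounded. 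With this replacement your parts (ii) and (iii) go through as sketched (in (iii) the reference in the statement to \eqref{eq: kkt condition lsc convex} should indeed be read as \eqref{eq: kkt full lsc L=Id}, as you did).
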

The proof for Theorem \ref{thm: pd full lsc prox convergence v1} can be derived from the proof of Theorem \ref{thm: pd full lsc prox convergence}.

% \begin{remark}
% In order to fully employ $\Phi_{lsc}^\mathbb{R}$-subdifferentials in both primal and dual update, we need more assumption compare to Theorem \ref{thm: pd full lsc prox convergence}, where we used convex subdifferentials. 
% % Again, the structure of the coupling term $\phi (x)$ make it difficult to properly investigate the convergence of $(\phi_n)_{n\in\mathbb{N}}$. 
% While we can design $\Phi_{lsc}^\mathbb{R}$-proximal operator on the primal space $X$, it remains challenging to do the same on $\Phi_{lsc}^\mathbb{R}$.
% \end{remark}

\section{Numerical Examples}
\label{sec: lsc-pd numerical}

\subsection{Toy Examples}
\label{subsec: example 1-2}

In this subsection, we demonstrate the performance of the primal-dual algorithms (Algorithm \ref{alg: lsc Chambolle-Pock}, \ref{alg: lsc Chambolle-Pock full L=Id} and \ref{alg: lsc Chambolle-Pock full L=Id v2}) for the following toy example.
\begin{example}
\label{ex1}
Consider the following problem on the real line,
\begin{equation}
    \min_{x\in \mathbb{R}} x^4 +x^2.
    \label{prob 1}
\end{equation}
This is a convex problem with a unique solution at $x=0$. We define the class $\Phi_{lsc}^\mathbb{R}$ of quadratic function,
\[
\Phi_{lsc}^\mathbb{R}=\{ \phi=(a,u)\in \mathbb{R}^2: \phi(x) = -ax^2 +ux\}.
\]
We split problem \eqref{prob 1} into the form $f+g$, where $f(x)=x^4, g(x) = x^2$. Since $g$ is convex, the corresponding Lagrange saddle point problem \eqref{prob: Lagrange g convex} is
\begin{equation}
\label{ex1: Lagrange prob convex}
    \min_{x\in\mathbb{R}}\max_{y\in \mathbb{R}} f(x)+\langle x,y\rangle -g^*(y) = \min_{x\in\mathbb{R}}\max_{y\in \mathbb{R}} x^4 +xy - \frac{y^2}{4}.
\end{equation}
Problem \eqref{ex1: Lagrange prob convex} is equivalent to \eqref{prob 1} in the sense of the same optimal solution and optimal value.
By using $\Phi_{lsc}^\mathbb{R}$-conjugation, we obtain $\Phi_{lsc}^\mathbb{R}$-saddle point problem \eqref{prob: DCP L=Id}
\begin{equation}
\label{ex1: Lagrange prob lsc convex}
\min_{x\in\mathbb{R}}\max_{\phi\in \Phi_{lsc}^\mathbb{R}} f(x)+\phi(x) -g^*_\Phi (\phi) = \min_{x\in\mathbb{R}}\max_{(a,u)\in \mathbb{R}^2} x^4 -ax^2+ux-g^*_\Phi (a,u),
\end{equation}
where $g^*_\Phi$ takes the form
\[
g^*_\Phi (a,u) =\begin{cases}
0 & a=-1,u=0,\\
\frac{u^{2}}{4\left(a+1\right)} & a>-1,\\
+\infty & \text{otherwise}.
\end{cases}
\]
Problem \eqref{ex1: Lagrange prob convex} has $(x,y) = (0,0)$ as a saddle point while problem \eqref{ex1: Lagrange prob lsc convex} has $(x,\phi)=(0,(-1,0))$ as the saddle point.

We implement Chambolle-Pock algorithm \cite{chambolle2011first} and Algorithm \ref{alg: lsc Chambolle-Pock} to Problem \ref{ex1: Lagrange prob convex} while using Algorithms \ref{alg: lsc Chambolle-Pock full L=Id} and \ref{alg: lsc Chambolle-Pock full L=Id v2} to problem \eqref{ex1: Lagrange prob lsc convex}. Since the dual variables belong to different spaces, we only visualize the primal variable $x_n$ returned by these algorithms in Figure \ref{ex1:fig}, when we apply the same set of parameters.

We set $\tau =\sigma=0.25$, initials $x_{-1} = x_0 = 5,y_0=10,\phi_0 =(2,-5)$, maximum iteration number is $N=501$. For algorithms \ref{alg: lsc Chambolle-Pock}, \ref{alg: lsc Chambolle-Pock full L=Id} and \ref{alg: lsc Chambolle-Pock full L=Id v2}, we choose $\bar{a}_0 = 100$. The results are shown in Figure \ref{ex1:fig}.
\begin{figure}[h]
\centering     %%% not \center
\includegraphics[width=0.9\textwidth]{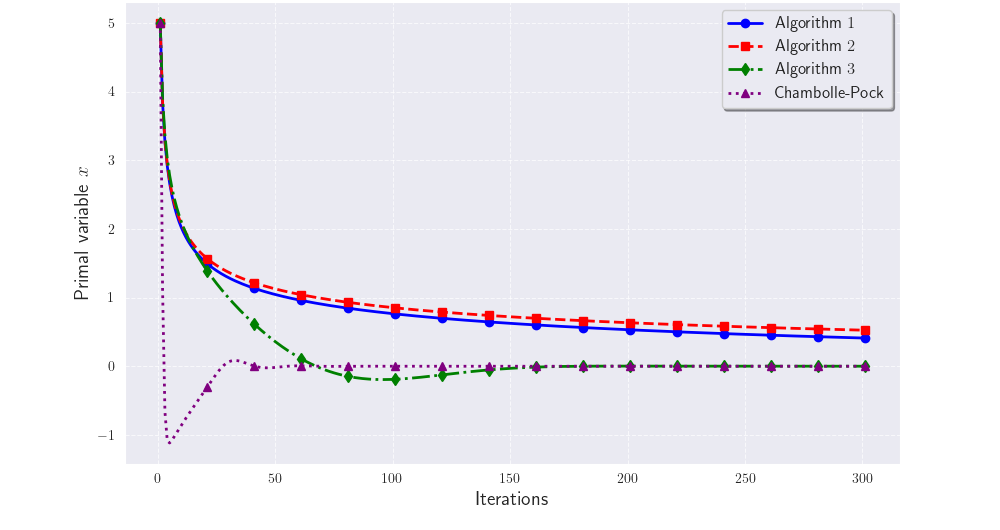}
    \caption{Performance of Primal-dual algorithms for Example~\ref{ex1}.}
    \label{ex1:fig}
\end{figure}

The details and explicit forms of the updates will be given in the appendix in Section \ref{apendix: numerical 1}.
\end{example}

Since the framework of $\Phi_{lsc}^\mathbb{R}$-conjugate applies to nonconvex function, we test algorithms \ref{alg: lsc Chambolle-Pock full L=Id} and \ref{alg: lsc Chambolle-Pock full L=Id v2} on the following example.
\begin{example}
\label{ex2: nonconvex example}
We want to find a solution to the problem,
\begin{equation}
\label{ex2: prob}
    \min_{x\in \mathbb{R}} x^4 -x^2.
\end{equation}
This problem is nonconvex (specifically weakly convex) with two global minimizers at $x=\pm\frac{1}{\sqrt{2}}$. We consider problem \eqref{ex2: prob} as $\min_x f(x)+g(x)$ with $f(x)=x^4,g(x)=-x^2$, and the $\Phi_{lsc}^\mathbb{R}$-saddle point problem has the form 
\[
\min_{x\in \mathbb{R}} \max_{\phi\in \Phi_{lsc}^\mathbb{R}} f(x)+\phi(x)-g^*_\Phi (\phi),
\]
which has two saddle points at $(\pm 1/\sqrt{2}, (1,0))$.
The function $g^*_\Phi (\phi)$ can be calculated as analoguously, which is
\[
g^*_\Phi (a,u) =\begin{cases}
0 & a=1,u=0,\\
\frac{u^{2}}{4\left(a-1\right)} & a>1,\\
+\infty & \text{otherwise}.
\end{cases}
\]
In fact, we provide the calculation of $g^*_\Phi$ for a general function $g(x)=cx^2, c\in \mathbb{R},$ in the appendix.

In this example, we use algorithms \ref{alg: lsc Chambolle-Pock full L=Id} and \ref{alg: lsc Chambolle-Pock full L=Id v2} to solve the $\Phi_{lsc}^\mathbb{R}$-saddle point problem. 
Let us set $\tau =\sigma=0.25$, maximum iteration number is $N=501$ and $\bar{a}_0 = 200$. We test the following initializations: Case $1$: $x_{-1} = x_0 = 2,\phi_0 =(1.5,2)$; Case 2: $x_{-1} = x_0 = 5,\phi_0 =(1.5,2)$; Case 3: $x_{-1} = x_0 = 5,\phi_0 =(5,0)$.
The results are shown in Figure \ref{ex2:fig}.

\begin{figure}[h]
\centering     %%% not \center
\subfigure[Case 1]{\label{}\includegraphics[height=4.cm]{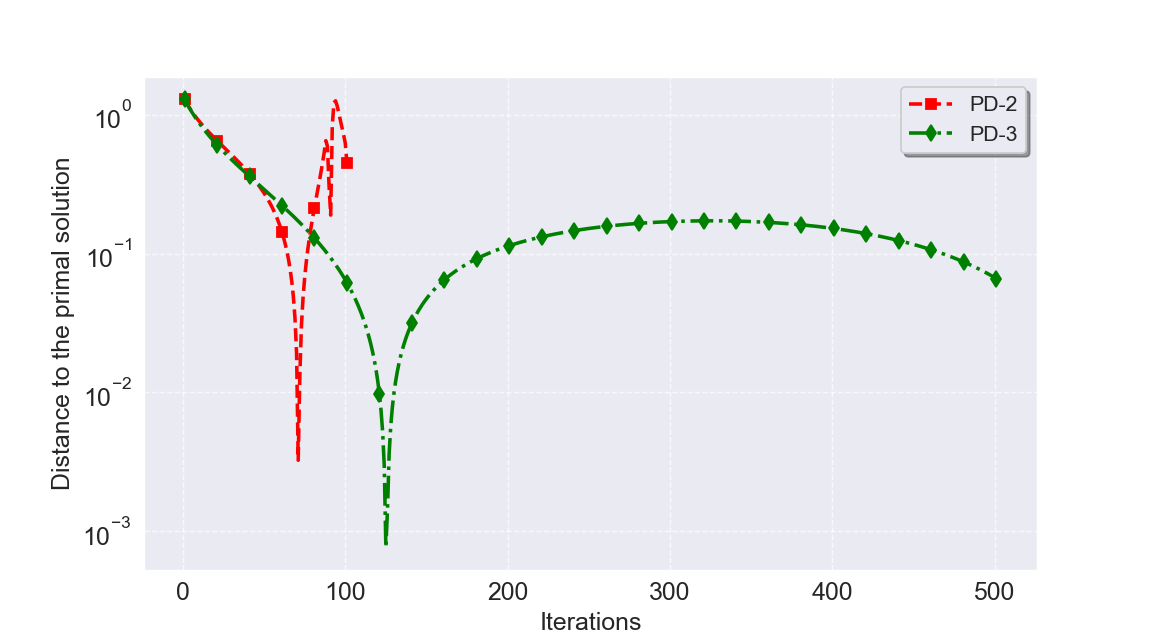}}
\subfigure[Case 2]{\label{}\includegraphics[height=4.cm]{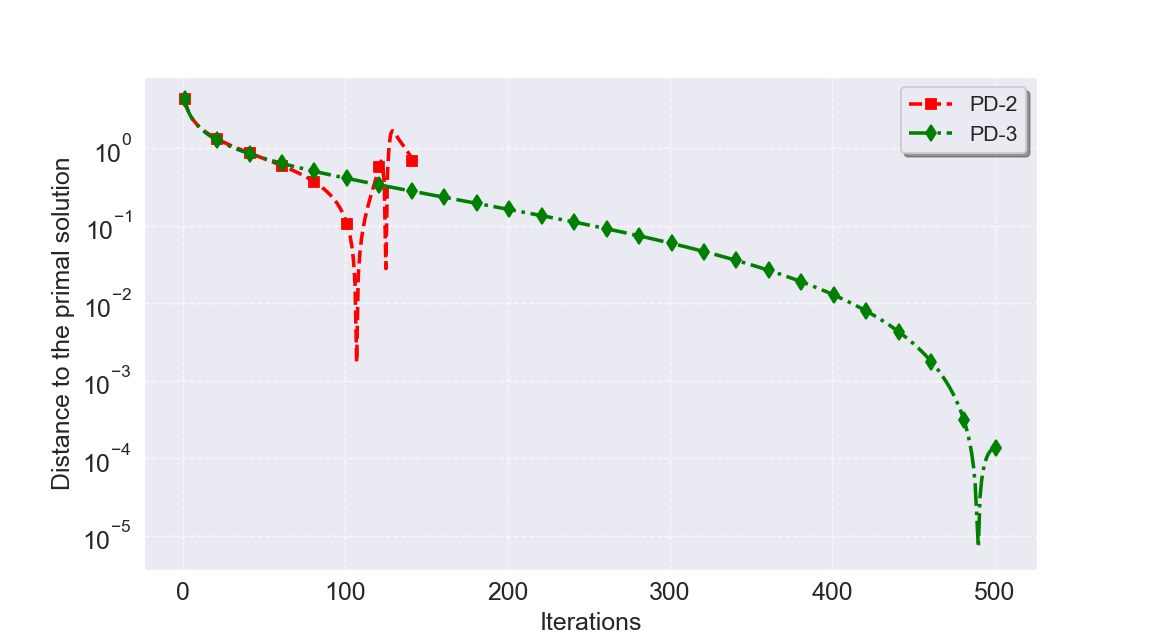}}
\subfigure[Case 3]{\label{}\includegraphics[height=4.cm]{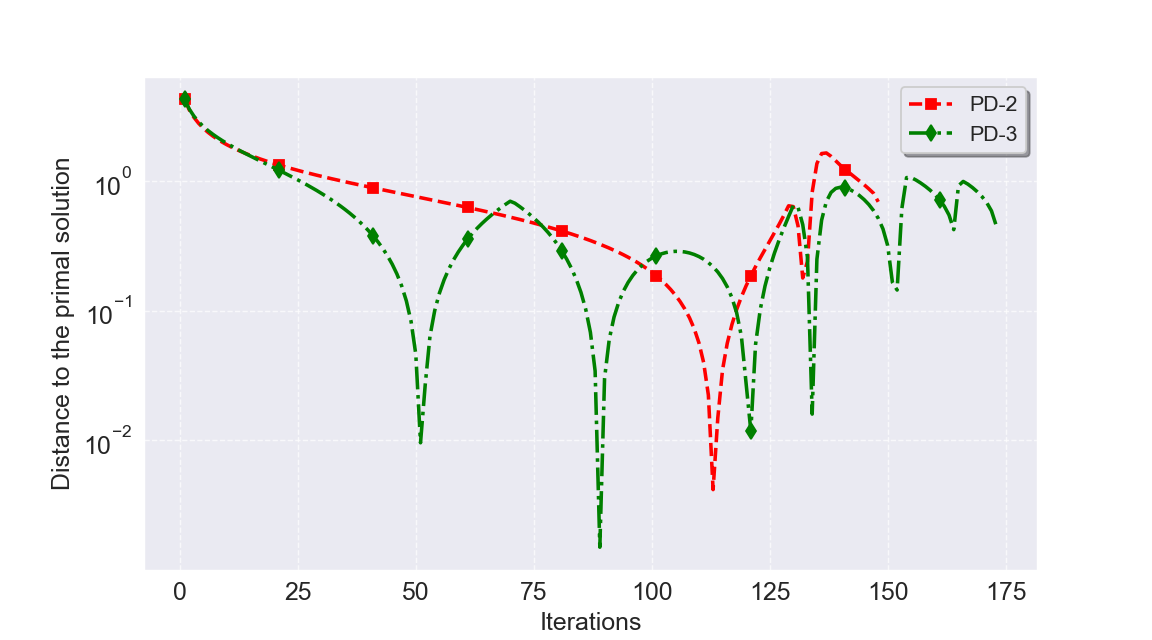}}
    \caption{$\lsc^\R$-Primal Dual algorithms for Example~\ref{ex2: nonconvex example}.}
    \label{ex2:fig}
\end{figure}
In all cases, Algorithm \ref{alg: lsc Chambolle-Pock full L=Id v2} outperforms Algorithm \ref{alg: lsc Chambolle-Pock full L=Id}. Both algorithms oscillate with a sharp jump, but Algorithm \ref{alg: lsc Chambolle-Pock full L=Id} stops early compare to Algorithm \ref{alg: lsc Chambolle-Pock full L=Id v2}. The oscillating behaviors are better shown in Case $3$. For Case $1$, we want to see how the iterates behave when starting near the solution, but the iterates converge slower than Case $2$ where $x_0$ is farther to the solutions. On the other hand, the pattern in Case $3$ indicates that the iterates do not tend to the solution but are bounded by a certain threshold away from the solutions. We suspect that for nonconvex problem, the initializations are important for the convergence of the iterate. On the other hand, each proximal step involves solving a cubic equation in which we cannot control the cubic solutions (more details in Appendix \ref{apendix: numerical 1}). On the other hand, the stopping criterion causes us to miss the best iterate in term of the distance to the solution.
\end{example}

\subsection{Application}
\label{subsec: example 3}
\begin{example}[\textbf{Binary tomography}]
%\paragraph{Binary Tomography}
Binary Tomography (BT) is a special case of Discrete Tomography (DT) which aims at reconstructing binary images starting from a limited number of their projections \cite{schule2005discrete, kadu2019convex}. 
The image $\overline x \in \R^n$ is represented as a grid of $n=n_1\times n_2$ pixels taking values $x_j \in \{-1,1\}$ for $j=1,\dots,n$. The projections $y_i$ for $i=1,\dots,m$ are linear combinations of the pixels along $m$ directions. The linear transformation from image to projections is modelled as 
\begin{equation}
    y=\mathbf{A}\overline x + \omega,
\end{equation}
where $x_j$ denotes the value of the image in the $j$-th cell, $y_i$ is the weighted sum of the image along the $i$-th ray, the element $\mathbf{A}_{i,j}$ of matrix $\mathbf{A}\in\R^{m\times n}$ is proportional to the length of the $i$-th ray in the $j$-th cell and finally $\omega\in\R^n$ is an additive noise with Gaussian distribution and standard deviation $\sigma>0$. In general, the projection matrix $\mathbf A$ has a low rank, meaning that $\operatorname{rank}(\mathbf A) < \min\{ n,m\}$ \cite{kadu2019convex}. We model the problem as
\begin{equation}\label{eq:discrete_tomography_objective}
    \minimize{{x\in\R^n, x_i\in \{ -1,1\} }}  \frac{\|\mathbf Ax-y\|_2^2}{2}.
\end{equation}
Instead of considering the binary variable $x$, we can turn problem \eqref{eq:discrete_tomography_objective} into a continuous one by adding the function $\fonc{F}{\mathbb{R}^n}{\mathbb{R}}$ defined as
\begin{equation}\label{eq:binaryfunction}
F({x}) = {\sum_{i=1}^n |x_i^2-1|},\end{equation}
which is separable as the sum of functions $f_i (x) = |x_i^2 -1|$. Observe that $F$ is a weakly convex function with the set of minimisers 
\[{ S_F = \{x=(x_1,\dots,x_n)\in\R^n\,|\, x_i \in\{-1,1\} \;\text{for}\; i=1,\dots,n\} = \{-1,1\}^n},\]
%and for this reason, this f
and can be seen as a non-smooth counterpart to the function 
 ${ x\mapsto  {\sum_{i=1}^n x_i^2(x_i-1)^2} }$,  $x\in\R^n$,
proposed in \cite{yang2021doubly} to promote binary integer solutions (with values in $\{0,1\}^n$) in Markov Random Field approaches. To the best of our knowledge, this is the first work where function $F$ is used in such a context.

Problem \eqref{eq:discrete_tomography_objective} transforms into
\begin{equation}\label{eq:discrete_tomography_new}
    \minimize{{x\in\R^n }}  \frac{\|\mathbf Ax-y\|_2^2}{2} +F({x}).
\end{equation}
We can write the objective function of \eqref{eq:discrete_tomography_new} in the form of composite functions $F(x)+G(Ax)$ and implement $\lsc^\R$-Chambolle-Pock Algorithm \ref{alg: lsc Chambolle-Pock} to solve this problem. 

The function $G:\R^m \to \R$ is $G(z) = \frac{\Vert z-y\Vert^2}{2}$ which has the conjugate
\[
G^*(v) = \frac{\Vert v\Vert^2}{2}+\langle v,y\rangle,
\]
and we can write the dual update similar to the previous examples, which is
\[
v_{n+1} = \frac{v_n +\tau (A\overline{x}_n - y)}{\tau+1}.
\]
For the primal step, it can be calculated as a proximal step as we mention in Algorithm \ref{alg: lsc Chambolle-Pock}. Specifically, since the function $F$ is weakly convex, we can obtain an explicit form of convex proximal operator for each component of $F$ i.e. the function $\mathcal{F}(x) = |x^2-1|$ (see \cite[Remark 8]{bednarczuk2023convergence}). For $\alpha >0, y\in \mathbb{R}$, we have
\begin{equation}\label{eq:prox_of_f}
\quad (\forall y \in \R  )\quad 
    \prox_{\alpha \mathcal{F}} (y) = \begin{cases}
        \frac{y}{1+2\alpha} & \text{ if } | y | >1+2\alpha,\\
        \frac{y}{1-2\alpha} & \text{ if } | y | <1-2\alpha,\\
        \frac{y}{|y|} & \text{ otherwise. } 
    \end{cases}  
\end{equation}

\begin{remark}
    The model in \eqref{eq:discrete_tomography_objective} (for appropriate choices of matrix $\mathbf A\in\R^{m\times n}$, vector $y\in\R^m$ and possible additional convex constraints) could be applied to other Binary Quadratic Programs (a special class of QCQP problems having the equality constraint $x_i^2=1$ for every $i\in\{1, \dots, n\}$ ) arising in Computer Vision, such as Graph Bisection, Graph Matching and Image Co-Segmentation (see \cite[Table 2]{Wang2016_BQ} and the references therein).
\end{remark}

% { \color{purple}
\paragraph{Setting:} For $\lsc^\R$-Chambolle-Pock Algorithm, we fix the number of iteration to $500$, stepsizes $\tau=\sigma= \frac{0.9}{ \Vert A\Vert}$ to ensure that $\tau\sigma \Vert A\Vert^2 <1$. For the parameter $\bar{a}_n$, we start with $a_0 =100$ and updating $\bar{a}_{n+1} = \bar{a}_n -\frac{1}{n^2}$.
\paragraph{Numerical tests} For our simulations we used the MATLAB codes and data from \cite{kadu2019convex, githubpage}. We reproduced the same synthetic setting and test from simple to complex images: Consider multiples phantoms (\emph{Apple}, \emph{Bird}, \emph{Lizard}, \emph{Octopus}, \emph{Spring}, \emph{Horse}, \emph{Pocket Watch} and \emph{Blood Vessels} ), corresponding to binary images of size 64 x 64 pixels. Operator $\mathbf A$ models an X-ray tomographic scan with 64 detectors and a parallel beam acquisition geometry with four angles (0°, 50°, 100°, 150°). We set $\sigma=0.01$ for the additive noise. \autoref{fig:discrete_tomography} illustrates, from left to right, the original image and the reconstructions obtained with the Least Squares QR method (LSQR), with the Truncated Least Squares QR method (TLSQR), with the DUAL method proposed in \cite{kadu2019convex} and finally our proposed Algorithm \ref{alg: lsc Chambolle-Pock} applied to \eqref{eq:discrete_tomography_new}, which we refer to as $\lsc^\R$-Chambolle-Pock method ($\lsc^\R$-CP). All methods are initialised with a vector of zeros.

\begin{figure}[ht]
    \centering
    \begin{tabular}{p{0.16\textwidth}p{0.16\textwidth}p{0.16\textwidth}p{0.16\textwidth}p{0.16\textwidth}}
    \phantom{ccccc} ORIG & \phantom{cccc} LSQR & \phantom{cc} TLSQR & \phantom{ci} DUAL & \phantom{c}{$\lsc^\R$-CP} \\
  \multicolumn{5}{c}{\centering
\includegraphics[width=0.9\textwidth,trim={3cm 9cm 2cm 9cm},clip]{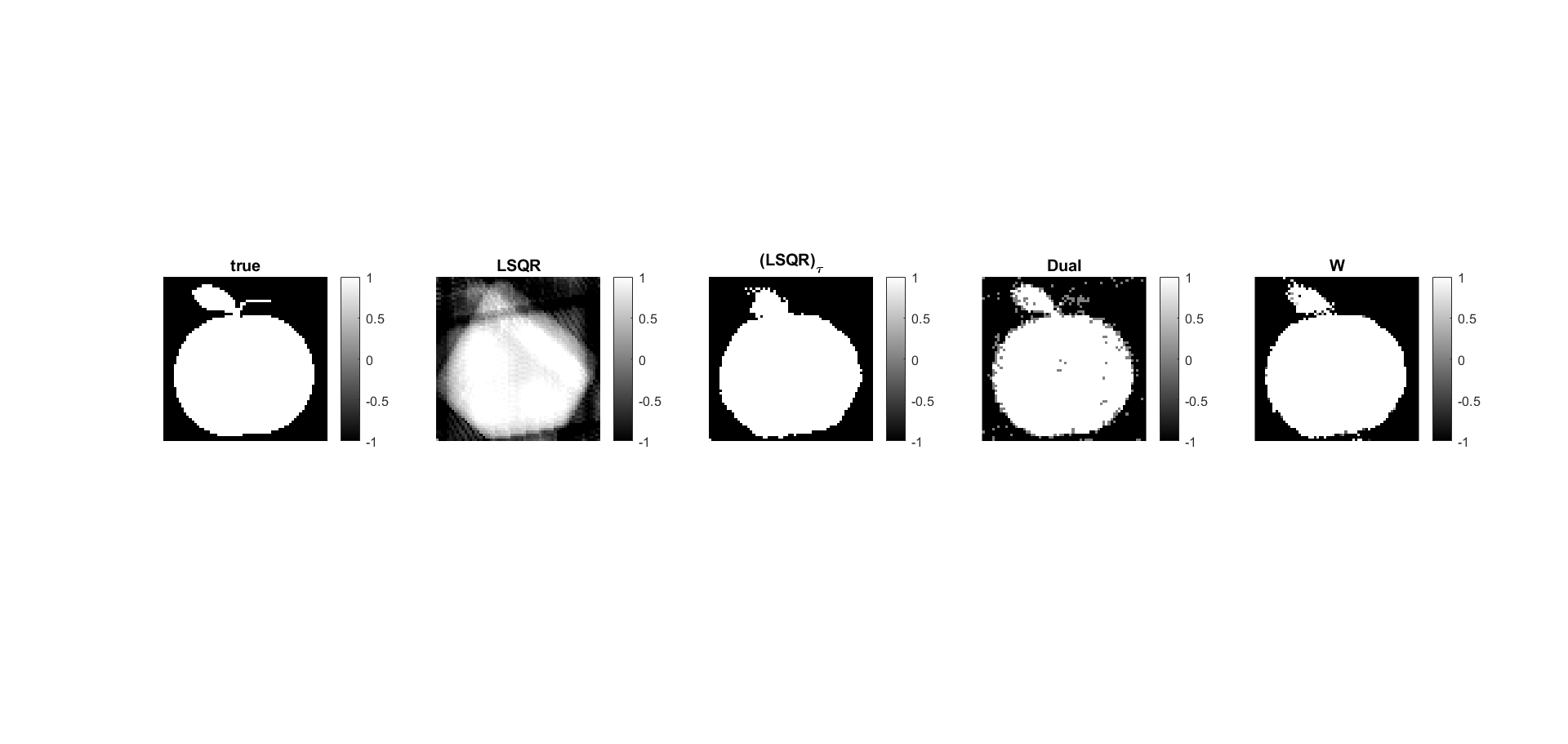}}\\
   \multicolumn{5}{c}{ 
\includegraphics[width=0.9\textwidth,trim={3cm 9cm 2cm 9.5cm},clip]{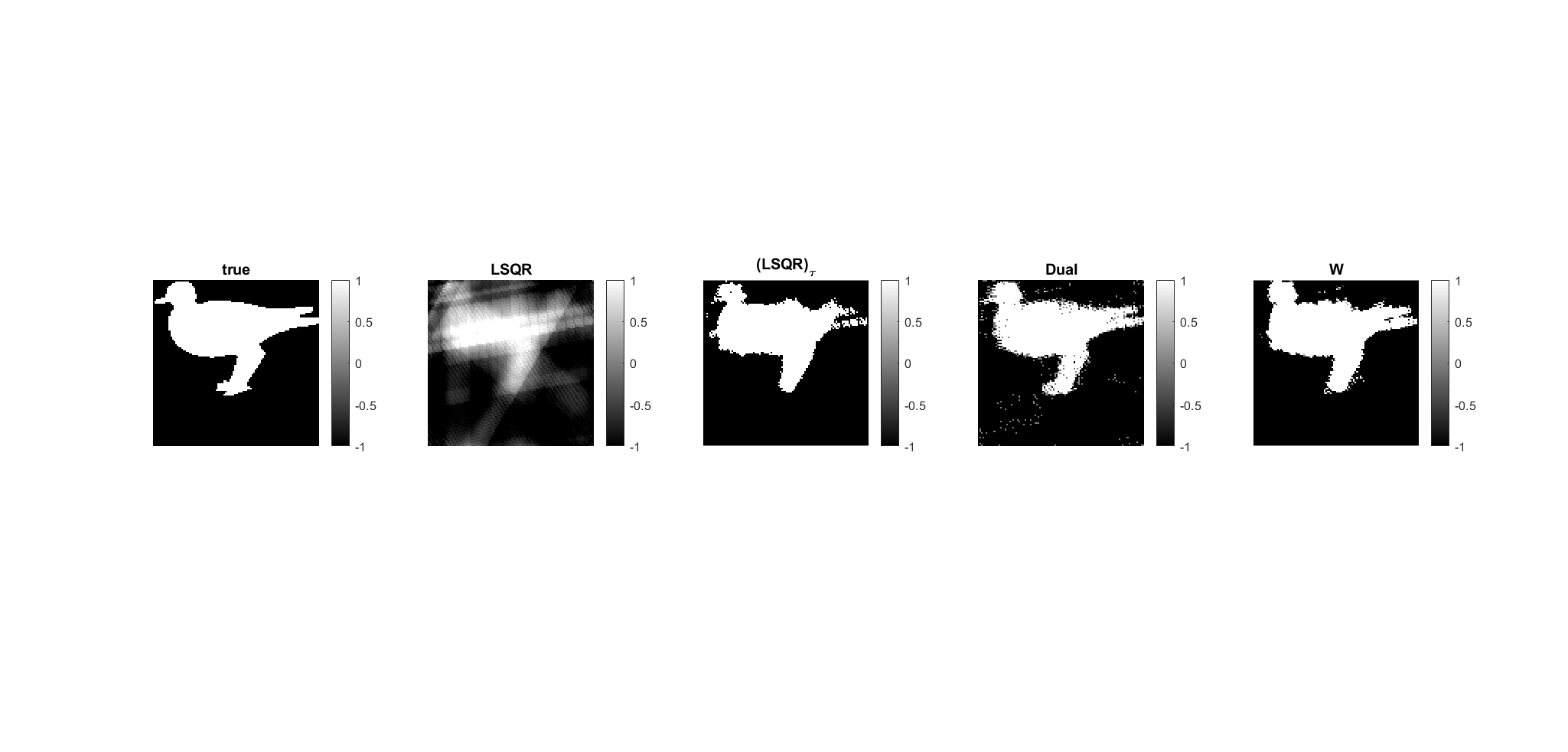}}\\
   \multicolumn{5}{c}{ 
\includegraphics[width=0.9\textwidth,trim={3cm 9cm 2cm 9.5cm},clip]{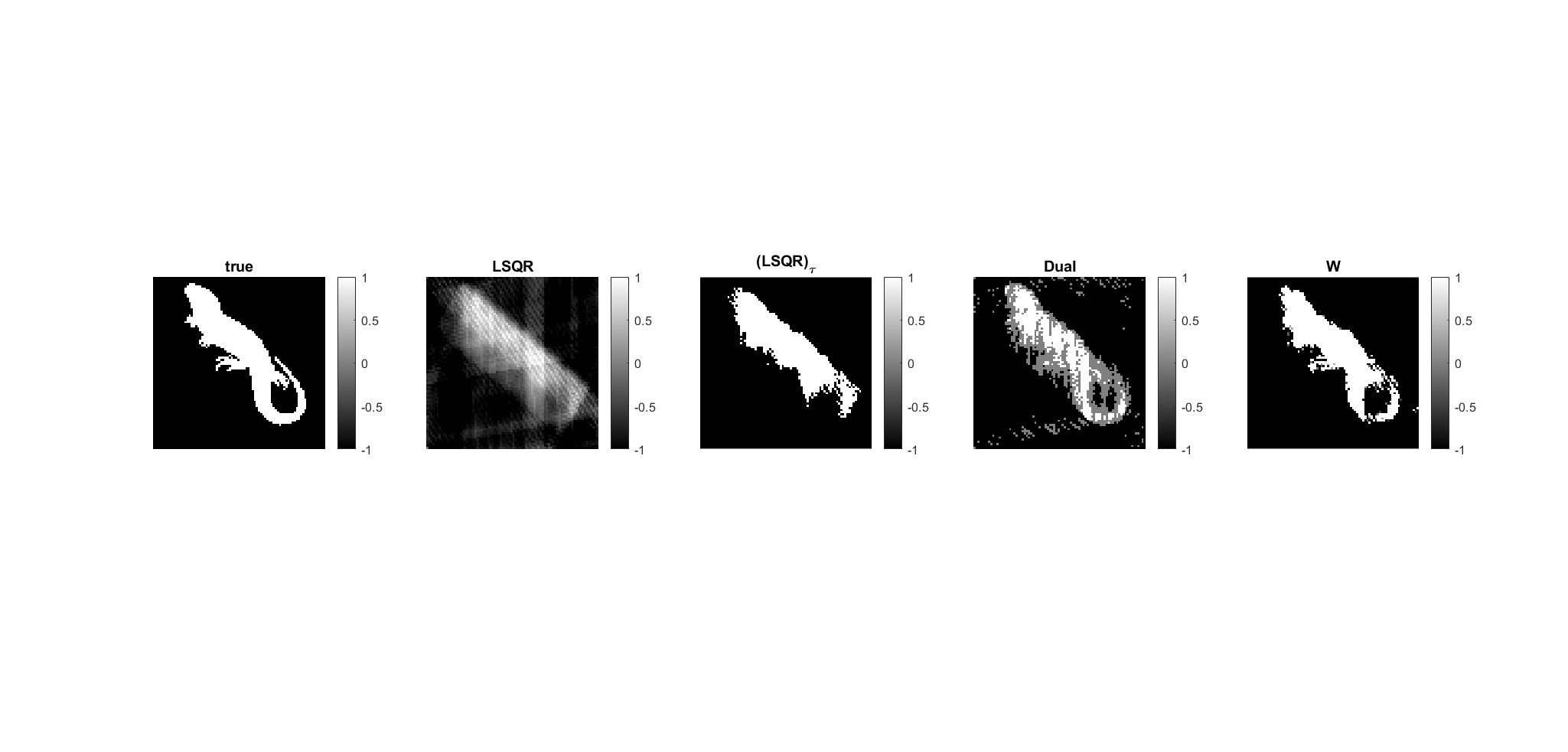}}\\
   \multicolumn{5}{c}
{\includegraphics[width=0.9\textwidth,trim={3cm 9cm 2cm 9cm},clip]{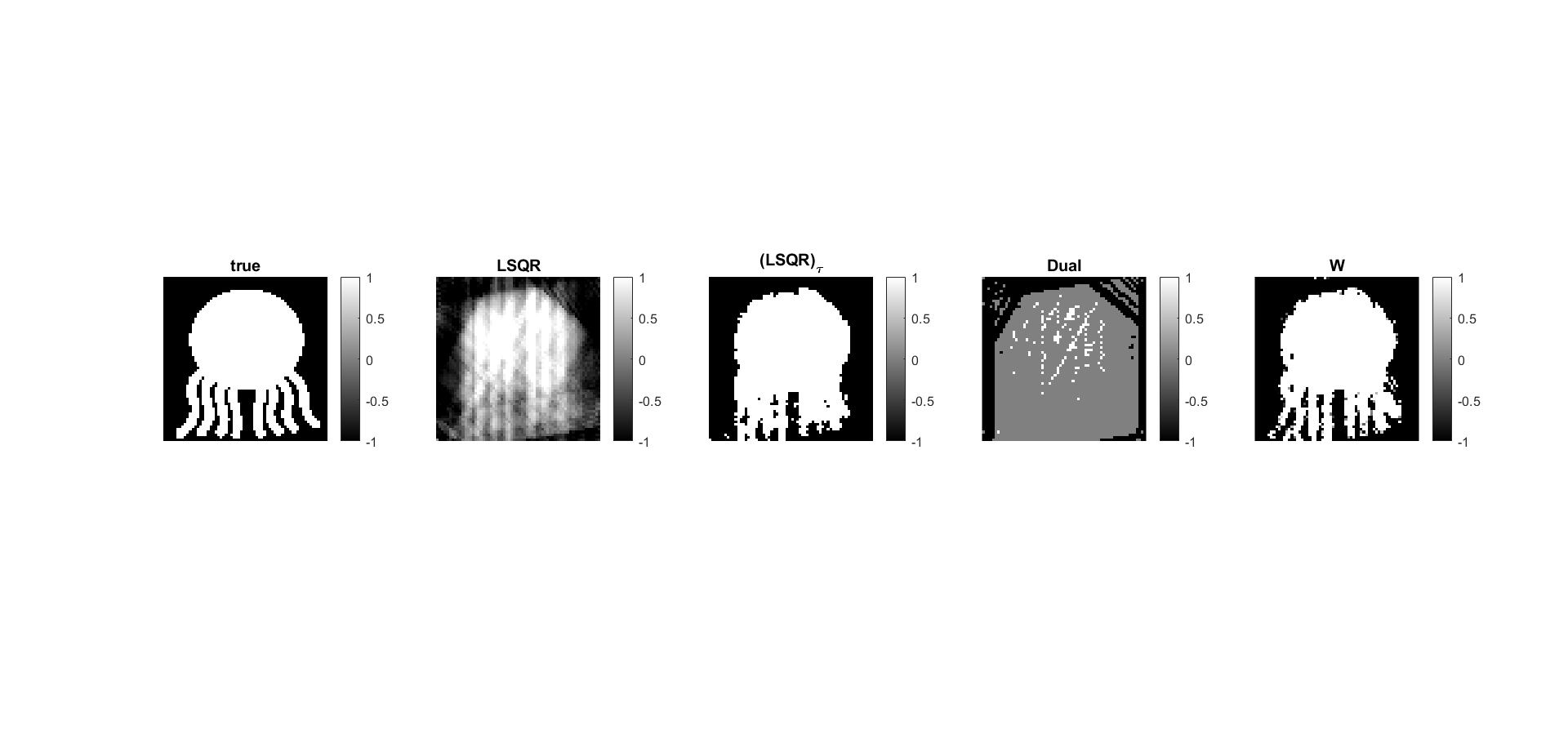}}\\
   \multicolumn{5}{c}{
\includegraphics[width=0.9\textwidth,trim={3cm 9cm 2cm 9.5cm},clip]{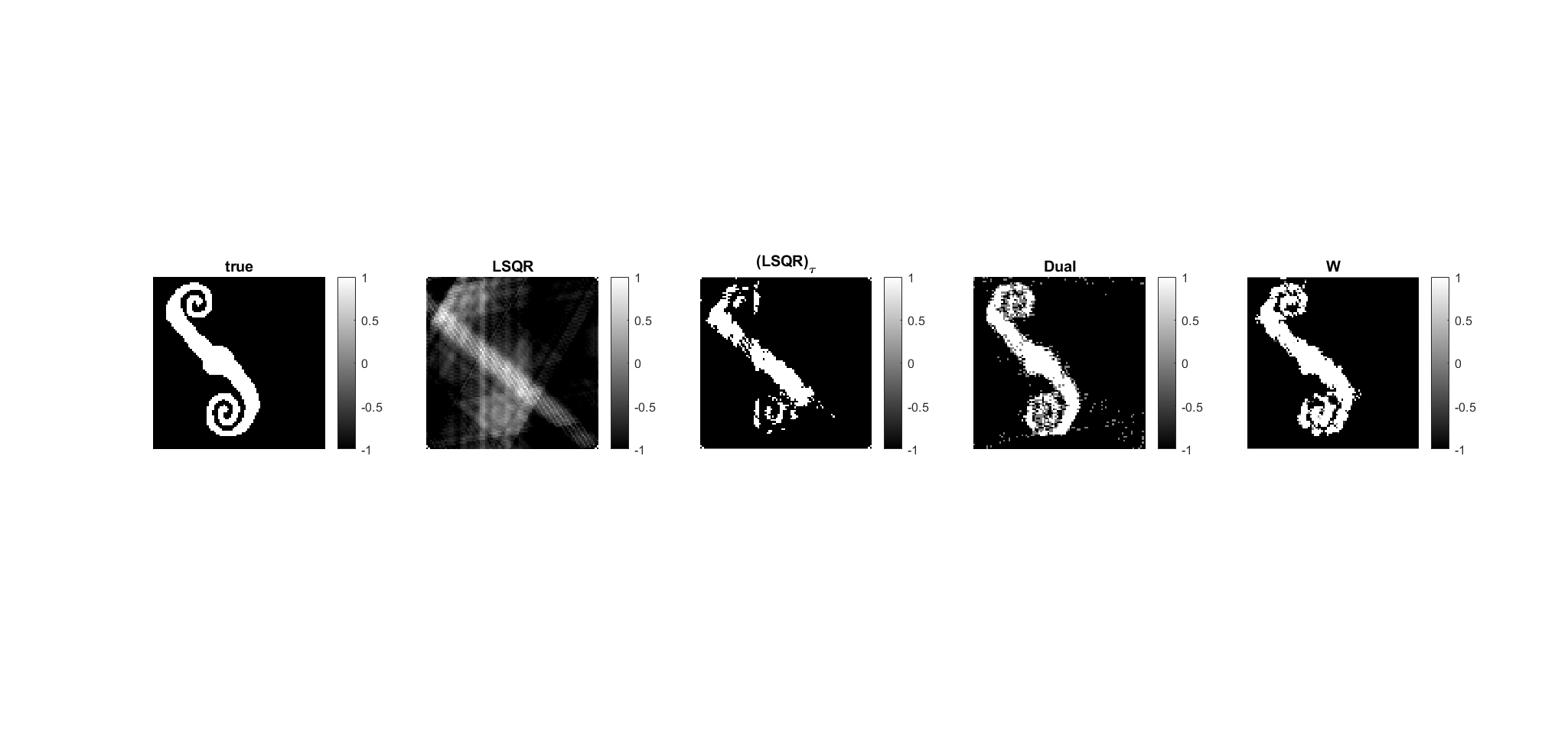}}\\
    \multicolumn{5}{c}{
\includegraphics[width=0.9\textwidth,trim={3cm 9.5cm 2cm 8.9cm},clip]{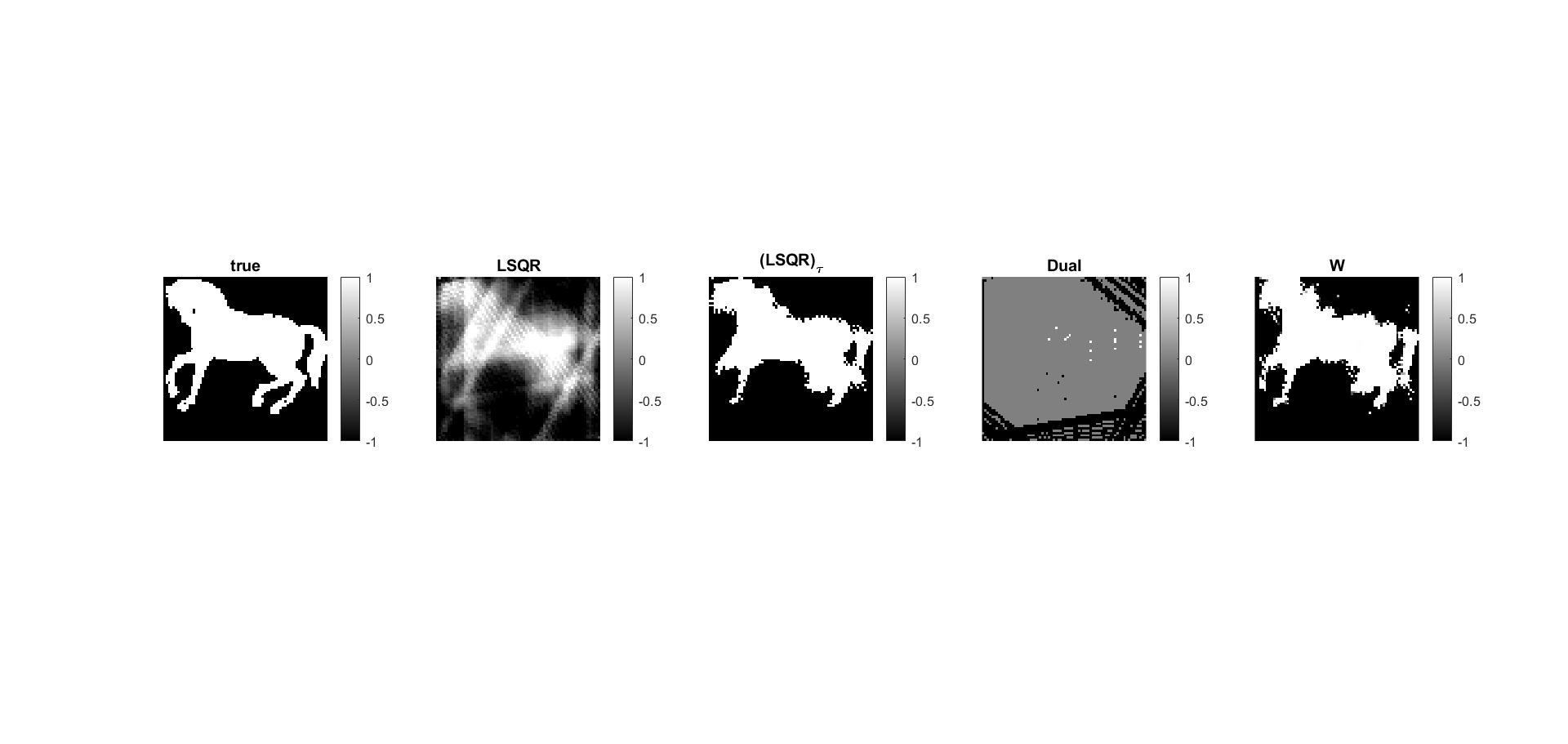}}\\
\multicolumn{5}{c}{
\includegraphics[width=0.9\textwidth,trim={3cm 9.5cm 2cm 9.5cm},clip]{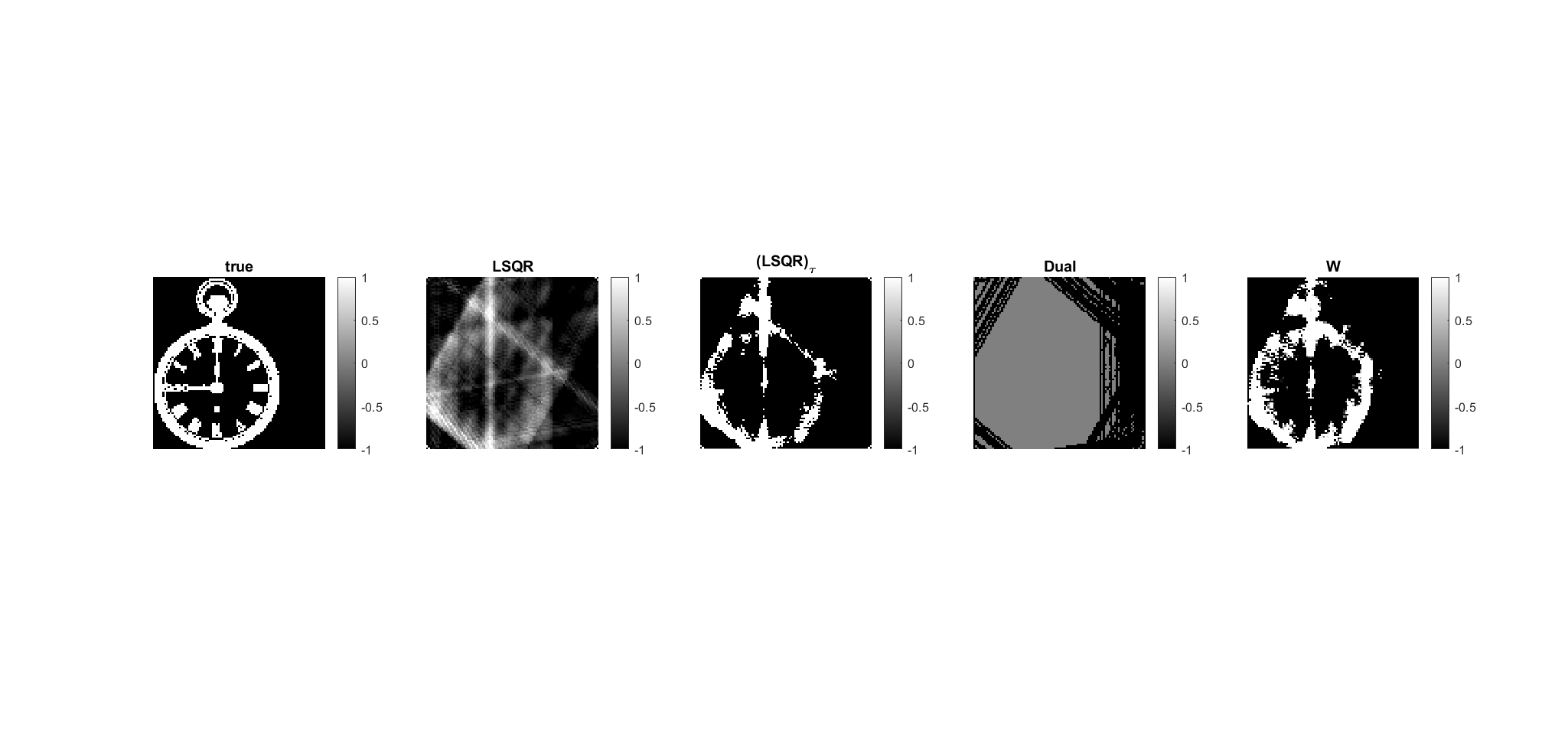}}\\
\multicolumn{5}{c}{
\includegraphics[width=0.9\textwidth,trim={3.2cm 9cm 2cm 9cm},clip]{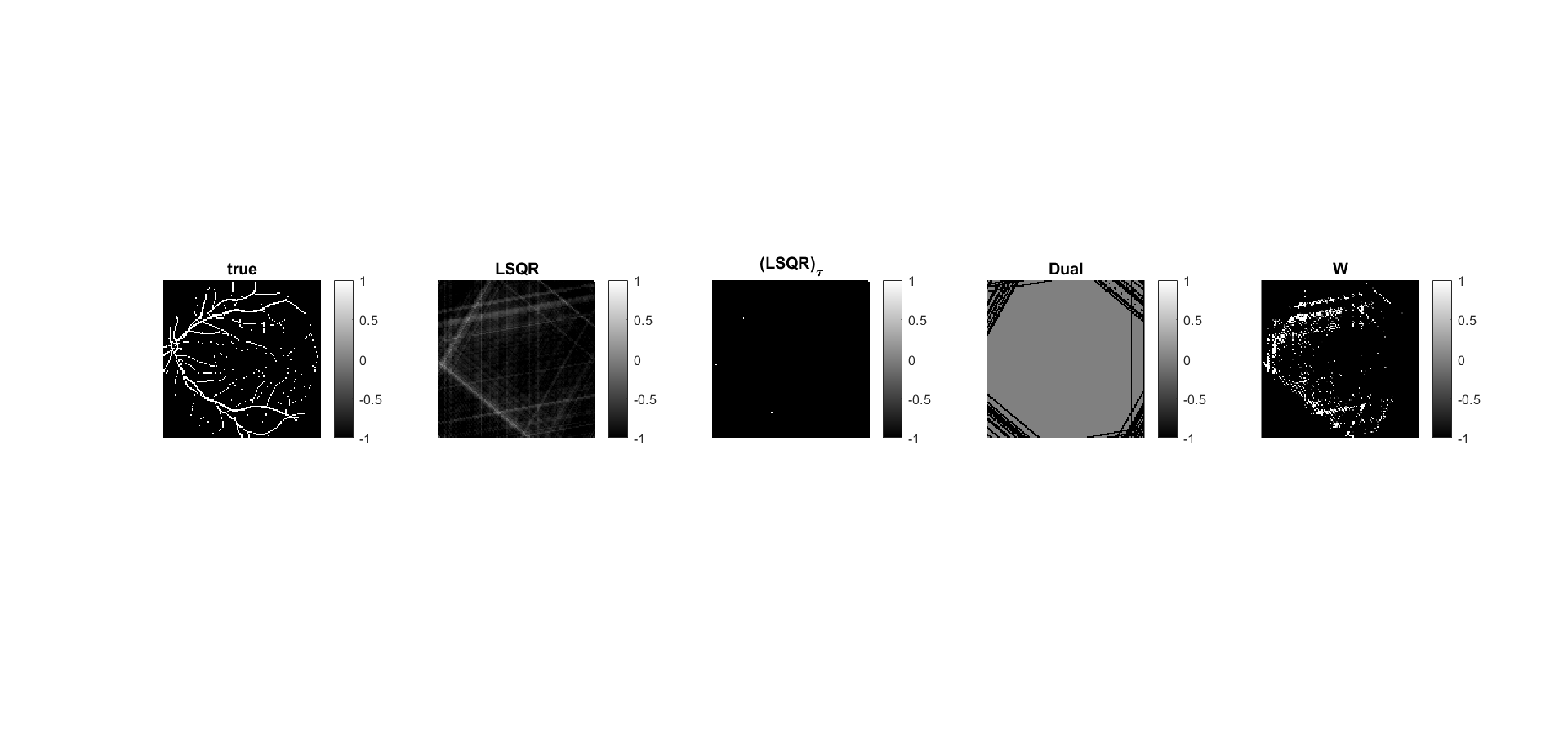}}\\
      \end{tabular}
       \caption{Comparisons between different solutions. From left to right: original image, least squares solution, thresholded least squares solution, solution obtained with the dual method proposed in \cite{kadu2019convex}, solution obtained with our model.}
    \label{fig:discrete_tomography}
\end{figure}

\end{example}
%% new part
\paragraph{Funding}{This work was funded by the European Union's Horizon 2020 research and innovation program under the Marie Sk{\l}odowska-Curie grant agreement No. 861137. This work represents only the authors' view and the European Commission is not responsible for any use that may be made of the information it contains.}

\bibliographystyle{acm}
\bibliography{ref_library}

\begin{appendices}
\section{Solving minimization problem on the class of elementary functions}
\label{appendix: conjugate prox}
In this part, we consider how to solve the following problem 
\begin{equation}
    \min_{\phi \in \Phi_{lsc}^\mathbb{R}} h (\phi),
    \label{eq: min conjugate g}
\end{equation}
where $h:\Phi_{lsc}^\mathbb{R} \to (-\infty,+\infty]$ is a proper lsc function which is bounded from below.
We want to design a proximal algorithm to solve problem \eqref{eq: min conjugate g} using $\Phi_{lsc}^\mathbb{R}$-subdifferentials \eqref{eq: lsc subgrad conj}. This leads us to Algorithm \ref{alg: lsc prox conjugate}.

\begin{algorithm}[H]\caption{$\lsc^\R$-Proximal Algorithm on $\Phi_{lsc}^\mathbb{R}$}\label{alg: lsc prox conjugate}
\textbf{Initialize:} Choose $\gamma >0, \phi_0=(a_0,u_0)\in \Phi_{lsc}^\mathbb{R}$\\
{\normalfont\textbf{Update: }}{
For $n\in\N$,
\begin{itemize}
    \item Find $\phi_{n+1}=(a_{n+1},u_{n+1})$ such that 
    \begin{equation}
    \label{eq: proximal update conjugate}
    \frac{u_n -u_{n+1}}{\gamma} \in \partial_{\Phi}^\mathbb{R} h(\phi_{n+1}).
    \end{equation}
    \item \textbf{If} there is no such $\phi_{n+1}$, \textbf{return} $\phi_n$.
\end{itemize}}
\end{algorithm}
\vspace{0.5cm}

Algorithm \ref{alg: lsc prox conjugate} follows the approach of \cite{bednarczuk2025proximal}. Since $\Phi_{lsc}^\mathbb{R}$-subgradient is defined on $X$, the simplest choice would be to use only the second component of $\phi$ which is $u\in X$.
The first component $a\in \mathbb{R}$ can be considered as a parameter which we can choose to fit the proximal update \eqref{eq: proximal update conjugate}.
Let us state the first estimation comes from Algorithm \ref{alg: lsc prox conjugate}
\begin{proposition}
\label{prop: lsc prox conj 1st estimation}
Let $h:\Phi_{lsc}^\mathbb{R} \to (-\infty,+\infty]$ be a proper lsc function on $\Phi_{lsc}^\mathbb{R}$ which is bounded from below. Let $(\phi_{n})_{n\in\mathbb{N}}$ be the sequence generated by Algorith \ref{alg: lsc prox conjugate} with stepsize $\gamma>0$. Then for any $\phi\in \Phi_{lsc}^\mathbb{R}$, we have, 
\begin{equation}
\label{eq: PPA conjugate 1st estimate}
h\left(\phi\right)-h\left(\phi_{n+1}\right) \geq 
\frac{1}{2\gamma}\left[\left\Vert u-u_{n+1}\right\Vert ^{2}-\left\Vert u-u_{n}\right\Vert ^{2}\right]+\left(1-\frac{2\left(a-a_{n+1}\right)}{\gamma}\right)\frac{\left\Vert u_{n}-u_{n+1}\right\Vert ^{2}}{2\gamma}.
\end{equation}
If $\gamma \geq a_n -a_{n+1}$ for all $n\in\mathbb{N}$, then $h(\phi_n)$ is non-increasing.
\end{proposition}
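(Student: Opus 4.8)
The plan is to extract the defining inequality of the $X$-subgradient from Definition~\ref{def:lsc-subdiff}, specialise the test point, and then rewrite the resulting inner product as a combination of squared norms. Set $d_n := u_n-u_{n+1}$, so that the update \eqref{eq: proximal update conjugate} says $d_n/\gamma\in\partial_\Phi^\mathbb{R} h(\phi_{n+1})$. By the $X$-subgradient inequality \eqref{eq: lsc subgrad conj} applied to $h$ at $\phi_{n+1}$, for every $\psi=(b,v)\in\Phi_{lsc}^\mathbb{R}$,
\[
h(\psi)-h(\phi_{n+1})\ \geq\ \psi(d_n/\gamma)-\phi_{n+1}(d_n/\gamma).
\]
First I would evaluate the two elementary functions at $d_n/\gamma$ using the form $\phi(x)=-a\Vert x\Vert^2+\langle u,x\rangle$ from \eqref{def: lsc class}, which gives $\psi(d_n/\gamma)-\phi_{n+1}(d_n/\gamma)=-\tfrac{b-a_{n+1}}{\gamma^2}\Vert d_n\Vert^2+\tfrac1\gamma\langle v-u_{n+1},d_n\rangle$.

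Next, taking $\psi=\phi=(a,u)$ arbitrary and applying the polarisation identity $\langle u-u_{n+1},u_n-u_{n+1}\rangle=\tfrac12\bigl(\Vert u-u_{n+1}\Vert^2+\Vert u_n-u_{n+1}\Vert^2-\Vert u-u_n\Vert^2\bigr)$, the inner-product term in the bound splits into a telescoping part $\tfrac{1}{2\gamma}\bigl(\Vert u-u_{n+1}\Vert^2-\Vert u-u_n\Vert^2\bigr)$ plus an extra $\tfrac{1}{2\gamma}\Vert u_n-u_{n+1}\Vert^2$. Collecting this last term with the quadratic contribution $-\tfrac{a-a_{n+1}}{\gamma^2}\Vert u_n-u_{n+1}\Vert^2$ and factoring $\tfrac{\Vert u_n-u_{n+1}\Vert^2}{2\gamma}$ out of the two gives precisely \eqref{eq: PPA conjugate 1st estimate}. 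This is a routine one-step computation; the only point requiring attention is that, unlike in the classical affine setting, the quadratic coefficient $a$ of the test function does not cancel, and it is this surviving $-a\Vert\cdot\Vert^2$ term that generates the factor $1-\tfrac{2(a-a_{n+1})}{\gamma}$.

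Finally, for the monotonicity claim I would specialise \eqref{eq: PPA conjugate 1st estimate} to $\phi=\phi_n=(a_n,u_n)$; then $\Vert u-u_n\Vert^2=0$ and the right-hand side collapses to $\bigl(1-\tfrac{a_n-a_{n+1}}{\gamma}\bigr)\tfrac{\Vert u_n-u_{n+1}\Vert^2}{\gamma}$, which is nonnegative as soon as $\gamma\geq a_n-a_{n+1}$ (recall $\gamma>0$). Hence $h(\phi_n)\geq h(\phi_{n+1})$ for all $n$, so $(h(\phi_n))_{n\in\N}$ is non-increasing. I do not expect a genuine obstacle here: everything reduces to the single subgradient inequality together with elementary algebra, the only subtlety being the bookkeeping of the $\Vert\cdot\Vert^2$ terms carried by the class $\Phi_{lsc}^\mathbb{R}$, and noting that the estimate holds along the sequence exactly when the update \eqref{eq: proximal update conjugate} is well posed (i.e.\ $\phi_{n+1}$ exists), which is the standing hypothesis.
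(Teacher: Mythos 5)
Your argument is correct and is essentially the paper's own proof: both expand the $X$-subgradient inequality at $\phi_{n+1}$ evaluated at $(u_n-u_{n+1})/\gamma$, apply the same three-point (polarisation) identity to the inner product to obtain \eqref{eq: PPA conjugate 1st estimate}, and then set $\phi=\phi_n$ to get the nonnegative factor $\bigl(1-\tfrac{a_n-a_{n+1}}{\gamma}\bigr)\tfrac{\Vert u_n-u_{n+1}\Vert^2}{\gamma}$ under $\gamma\geq a_n-a_{n+1}$. No gaps.
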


\begin{proof}
Expressing \eqref{eq: proximal update conjugate} using $\Phi_{lsc}^{\mathbb{R}}$-subgradient, for any $\phi\in\Phi_{lsc}^{\mathbb{R}}$, we obtain \eqref{eq: PPA conjugate 1st estimate},
\begin{align*}
h\left(\phi\right)-h\left(\phi_{n+1}\right) & \geq-\frac{\left(a-a_{n+1}\right)}{\gamma^{2}}\left\Vert u_{n}-u_{n+1}\right\Vert ^{2}+\frac{1}{\gamma}\left\langle u-u_{n+1},u_{n}-u_{n+1}\right\rangle \\
 & =\frac{1}{2\gamma}\left[\left\Vert u-u_{n+1}\right\Vert ^{2}-\left\Vert u-u_{n}\right\Vert ^{2}\right]+\left(1-\frac{2\left(a-a_{n+1}\right)}{\gamma}\right)\frac{\left\Vert u_{n}-u_{n+1}\right\Vert ^{2}}{2\gamma}.
\end{align*}

Let $\phi=\phi_{n}$ in \eqref{eq: PPA conjugate 1st estimate}, we obtain 
\begin{align*}
h\left(\phi_{n}\right)-h\left(\phi_{n+1}\right) & \geq\left(1-\frac{\left(a_{n}-a_{n+1}\right)}{\gamma}\right)\frac{\left\Vert u_{n}-u_{n+1}\right\Vert ^{2}}{\gamma},
\end{align*}
which the RHS is non-negative when $\gamma \geq a_n-a_{n+1}$. Hence we finish the proof.
\end{proof}
To obtain convergence of Algorithm \ref{alg: lsc prox conjugate}, $(a_n)_{n\in\mathbb{N}}$ has to be chosen in a specific way as follow.
\begin{theorem}
\label{thm: prox conj convergence}
Let $h:\Phi_{lsc}^\mathbb{R} \to (-\infty,+\infty]$ be a proper lsc function on $\Phi_{lsc}^\mathbb{R}$ which is bounded from below. Let $(\phi_{n})_{n\in\mathbb{N}}$ be the sequence generated by Algorithm \ref{alg: lsc prox conjugate} with stepsize $\gamma>0$. Let $\phi^{*}=\left(a^{*},u^{*}\right)$ be the minimizer of problem \eqref{eq: min conjugate g}. If $\left(a_{n}\right)_{n\in\mathbb{N}}$ is chosen such that 
$
{\gamma+2a^{*}} \geq 2a_{n}
$
for all $n\in\mathbb{N}$. Then $\left(\phi_{n}\right)_{n\in\mathbb{N}}$ is bounded. If $\Phi_{lsc}^\mathbb{R}$ is finite dimensional and $1 -(a_{n}-a_{n+1})\gamma \geq 0$ diverges from zero then there exists a convergence subsequence $\left(u_{n_{k}}\right)_{k\in\mathbb{N}}$ which converges to a minimizer.
\end{theorem}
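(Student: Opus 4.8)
The plan is to run the classical proximal-point convergence scheme, with Proposition~\ref{prop: lsc prox conj 1st estimation} as the basic inequality, while keeping in mind that $\partial_{\Phi}^{\mathbb{R}}h$ takes values in $X$, so the recursion pins down only the second component $u_n$ of $\phi_n=(a_n,u_n)$ and leaves $a_n$ as a free parameter.

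First I would establish Fej\'er monotonicity of $(u_n)$ towards $u^*$. Plugging $\phi=\phi^*=(a^*,u^*)$ into \eqref{eq: PPA conjugate 1st estimate} and using that $\phi^*$ minimizes $h$, so that the left-hand side is nonpositive, one obtains $\tfrac{1}{2\gamma}\bigl(\Vert u^*-u_{n+1}\Vert^2-\Vert u^*-u_n\Vert^2\bigr)+\bigl(1-\tfrac{2(a^*-a_{n+1})}{\gamma}\bigr)\tfrac{\Vert u_n-u_{n+1}\Vert^2}{2\gamma}\le 0$. The standing hypothesis relating $(a_n)$, $a^*$ and $\gamma$ is there precisely to make the coefficient $1-2(a^*-a_{n+1})/\gamma$ nonnegative, so the last term may be discarded and $\Vert u^*-u_{n+1}\Vert\le\Vert u^*-u_n\Vert$ for all $n$; hence $(u_n)$ is bounded. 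Since $(a_n)$ is also bounded (the hypothesis bounds it from above, and it is chosen within a bounded range), $(\phi_n)$ is bounded in $\Phi_{lsc}^{\mathbb{R}}\cong\mathbb{R}\times X$, which is the first assertion.

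For the subsequential convergence, assume $X$ (equivalently $\Phi_{lsc}^{\mathbb{R}}$) finite dimensional. Taking $\phi=\phi_n$ in \eqref{eq: PPA conjugate 1st estimate} gives, as recorded in the proof of Proposition~\ref{prop: lsc prox conj 1st estimation}, that $h(\phi_n)-h(\phi_{n+1})\ge\bigl(1-(a_n-a_{n+1})/\gamma\bigr)\Vert u_n-u_{n+1}\Vert^2/\gamma\ge 0$; thus $(h(\phi_n))$ is nonincreasing, hence convergent because $h$ is bounded below, and telescoping yields $\sum_n\bigl(1-(a_n-a_{n+1})/\gamma\bigr)\Vert u_n-u_{n+1}\Vert^2<\infty$. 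The hypothesis that this coefficient stays bounded away from zero then forces $\Vert u_n-u_{n+1}\Vert\to 0$, i.e. the subgradient iterate $x_n:=(u_n-u_{n+1})/\gamma\to 0$. Using boundedness of $(\phi_n)$ and finite dimensionality, extract a subsequence with $\phi_{n_k}=(a_{n_k},u_{n_k})\to\bar\phi=(\bar a,\bar u)$ and, after a further extraction, $a_{n_k+1}\to\tilde a$; since $u_{n_k+1}=u_{n_k}-\gamma x_{n_k}\to\bar u$, we get $\phi_{n_k+1}\to\tilde\phi:=(\tilde a,\bar u)$. Now pass to the limit in the $\Phi_{lsc}^{\mathbb{R}}$-subgradient inequality $h(\psi)\ge h(\phi_{n_k+1})+\psi(x_{n_k})-\phi_{n_k+1}(x_{n_k})$, valid for every $\psi\in\Phi_{lsc}^{\mathbb{R}}$: here $\psi(x_{n_k})\to\psi(0)=0$ and $\phi_{n_k+1}(x_{n_k})=-a_{n_k+1}\Vert x_{n_k}\Vert^2+\langle u_{n_k+1},x_{n_k}\rangle\to 0$ because the coefficients stay bounded while $x_{n_k}\to 0$, while lower semicontinuity of $h$ at $\tilde\phi$ gives $\liminf_k h(\phi_{n_k+1})\ge h(\tilde\phi)$. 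Hence $h(\psi)\ge h(\tilde\phi)$ for all $\psi$, i.e. $0\in\partial_{\Phi}^{\mathbb{R}}h(\tilde\phi)$ and $\tilde\phi$ is a minimizer of $h$; since its second component is $\bar u=\lim_k u_{n_k}$, the subsequence $(u_{n_k})$ converges to the $u$-component of a minimizer, as claimed.

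I expect the main obstacle to be the asymmetric role of the two components of $\phi$: because $\partial_{\Phi}^{\mathbb{R}}h\subset X$, the algorithm never constrains $a_n$ directly, so (i) boundedness of $(a_n)$ and any control of $a_n-a_{n+1}$ must come from the standing hypotheses rather than from the dynamics, and (ii) in the limit passage one cannot expect $a_{n_k+1}$ to converge to the same $\bar a$ as $a_{n_k}$, so the minimizer produced has first component $\tilde a$ possibly different from $\bar a$ and only the $u$-part is identified --- which is exactly why the conclusion is phrased in terms of $(u_{n_k})$ alone. Carefully justifying the passage to the limit in the subgradient inequality (controlling $\phi_{n_k+1}(x_{n_k})$, invoking lower semicontinuity at the correct point $\tilde\phi$, and performing the further extraction for the $a$-component) is where most of the bookkeeping lies.
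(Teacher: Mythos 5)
Your proposal is correct and follows essentially the same route as the paper: Fej\'er monotonicity of $(u_n)$ obtained from \eqref{eq: PPA conjugate 1st estimate} with $\phi=\phi^*$, summability of $\bigl(1-(a_n-a_{n+1})/\gamma\bigr)\Vert u_n-u_{n+1}\Vert^2$ from the monotone decrease of $h(\phi_n)$ forcing $u_n-u_{n+1}\to 0$, and a passage to the limit in the subgradient inequality using lower semicontinuity of $h$. The only difference is bookkeeping: the paper extracts the convergent subsequence directly from $(\phi_{n_k+1})$, whereas you extract from $(\phi_{n_k})$ and perform a further extraction for the $a$-component of $\phi_{n_k+1}$, which leads to the same conclusion.
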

\begin{proof}
Let $\phi=\phi^{*}$ in \eqref{eq: PPA conjugate 1st estimate} be the minimizer of problem \eqref{eq: min conjugate g}, we have
\begin{align*}
0\geq h\left(\phi^{*}\right)-h\left(\phi_{n+1}\right) & \geq\frac{1}{2\gamma}\left[\left\Vert u^{*}-u_{n+1}\right\Vert ^{2}-\left\Vert u^{*}-u_{n}\right\Vert ^{2}\right]+\left(1-\frac{2\left(a^{*}-a_{n+1}\right)}{\gamma}\right)\frac{\left\Vert u_{n}-u_{n+1}\right\Vert ^{2}}{2\gamma}.
\end{align*}
If $\gamma+2a^{*}\geq 2a_{n}$ for all $n\in\mathbb{N}$ then we have $\left\Vert u^{*}-u_{n}\right\Vert ^{2}\geq\left\Vert u^{*}-u_{n+1}\right\Vert ^{2}.$
This infers that $\left(u_{n}\right)_{n\in\mathbb{N}}$ is bounded. From the assumption of $(a_n)_{n\in\mathbb{N}}$, we conclude that $(\phi_n)_{n\in\mathbb{N}}$ is bounded.
For the second assertion, we assume that $\Phi_{lsc}^\mathbb{R}$ is finite dimensional. Since $(\phi_n)_{n\in\mathbb{N}}$ is bounded, there exists a convergence subsequence $\left(\phi_{n_{k}+1}\right)_{k\in\mathbb{N}}$ which converges to some $\bar{\phi} = (\bar{a},\bar{u}) \in \Phi_{lsc}^\mathbb{R}$. 

We claim that if $u_{n}-u_{n+1}\to0$
then $\bar{u}$ belongs to the minimizer. Indeed, if it true, we have that $u_{n_{k}}\to\bar{u}$
and from \eqref{eq: PPA conjugate 1st estimate}, we have
\begin{align}
    & h(\phi)-h(\bar{\phi}) \geq h(\phi) - \liminf_{k\to \infty} h(\phi_{n_k}+1) \nonumber\\
    & \geq\liminf_{n\to \infty} \frac{1}{2\gamma}\left[\left\Vert u-u_{n_k+1}\right\Vert ^{2}-\left\Vert u-u_{n_k}\right\Vert ^{2}\right]+\left(1-\frac{2\left(a-a_{n_k+1}\right)}{\gamma}\right)\frac{\left\Vert u_{n_k}-u_{n_k+1}\right\Vert ^{2}}{2\gamma} =0,
\end{align}
for all $\phi \in \Phi_{lsc}^\mathbb{R}$. Hence $\bar{\phi}$ is a minimizer of $h$.
Now we just need to prove that $u_n-u_{n+1}\to 0$. From \eqref{eq: PPA conjugate 1st estimate} with $\phi=\phi_n$, for any $n\in\mathbb{N}$
\begin{equation*}
    h\left(\phi_{n}\right)-h\left(\phi_{n+1}\right) \geq\left(1-\frac{\left(a_{n}-a_{n+1}\right)}{\gamma}\right)\frac{\left\Vert u_{n}-u_{n+1}\right\Vert ^{2}}{\gamma}.
\end{equation*}
As $h$ is bounded from below, we have that 
\[
\sum_{n=0}^{+\infty} \left(1-\frac{\left(a_{n}-a_{n+1}\right)}{\gamma}\right)\frac{\left\Vert u_{n}-u_{n+1}\right\Vert ^{2}}{\gamma} <+\infty,
\]
which implies either $u_n-u_{n+1}\to 0$ or $a_n-a_{n+1} \to \gamma$. We obtain the former thanks to the assumption on $(a_n)_{n\in\mathbb{N}}$. This finishes the proof.
\end{proof}

\begin{remark}
From the above theorem, we only require that $a_n-a_{n+1} \leq \gamma$ and do not converge to $\gamma$, so that $u_n-u_{n+1}\to 0$. This is crucial to obtain convergence to the minimizer. On the other hand, we need information on the solution in order to choose $\gamma$ or $a_n$ such that $\gamma+2a^* \geq 2a_n$. This is one of the weakness of Algorithm \ref{alg: lsc prox conjugate}. 
\end{remark}
To avoid using information on the minimizer, we impose a restriction on the $\Phi_{lsc}^\mathbb{R}$-subgradient of the update \eqref{eq: proximal update conjugate}. Hence, we propose Algorithm \ref{alg: lsc prox conjugate v1}.
\begin{algorithm}[H]\caption{$\lsc^\R$-Proximal Algorithm on $\Phi_{lsc}^\mathbb{R}$ Algorithm}\label{alg: lsc prox conjugate v1}
\textbf{Initialize:} Choose $\gamma >0, \phi_0=(a_0,u_0)\in \Phi_{lsc}^\mathbb{R}$\\
{\normalfont\textbf{Update: }}{
For $n\in\N$,
\begin{itemize}
    \item Find $\phi_{n+1}=(a_{n+1},u_{n+1})$ such that 
    \begin{equation}
    \label{eq: proximal update conjugate v1}
    \frac{u_n -u_{n+1}}{\gamma} \in \partial_{\Phi}^\mathbb{R} h(\phi_{n+1}) \textbf{ and } \left\Vert u_{n}-u_{n+1}\right\Vert ^{2}\leq a_{n+1}-a_{n}.
    \end{equation}
    \item \textbf{If} there is no such $\phi_{n+1}$, \textbf{return} $\phi_n$.
\end{itemize}}
\end{algorithm}
\vspace{0.5cm}
% we propose the update: starting with $\gamma>0$ as the stepsize and initial $\phi_0\in \Phi_{lsc}^\mathbb{R}$, we need to find an operator $P_\gamma :\Phi_{lsc}^\mathbb{R} \to X$ such that 
% \[
% P_\gamma (\phi_n) -P_\gamma (\phi_{n+1}) \in \partial_\Phi^\mathbb{R} g^*_\Phi (\phi_{n+1})
% \]
Update \eqref{eq: proximal update conjugate v1} implies that $a_{n+1}\geq a_n$, so one needs to take a non-decreasing sequence $(a_n)_{n\in\mathbb{N}}$.

We state the convergence results of Algorithm \ref{alg: lsc prox conjugate v1} in the following theorem.
\begin{theorem}
Let $h:\Phi_{lsc}^\mathbb{R} \to (-\infty,+\infty]$ be a proper lsc function on $\Phi_{lsc}^\mathbb{R}$ which is bounded from below. Let $(\phi_{n})_{n\in\mathbb{N}}$ be the sequence generated by Algorithm \ref{alg: lsc prox conjugate} with stepsize $\gamma>0$.
For any $\phi\in \Phi_{lsc}^\mathbb{R}$, we have
\begin{align}
    h\left(\phi\right)-h\left(\phi_{n+1}\right) 
    &\geq\frac{1}{2\gamma^{2}}\left[\left(a-a_{n+1}\right)^{2}-\left(a-a_{n}\right)^{2}\right]+\frac{1}{2\gamma}\left[\left\Vert u-u_{n+1}\right\Vert ^{2}-\left\Vert u-u_{n}\right\Vert ^{2}\right] \nonumber\\
    & +\frac{1}{2\gamma}\left\Vert u_{n}-u_{n+1}\right\Vert ^{2}\left[\frac{\left\Vert u_{n}-u_{n+1}\right\Vert ^{2}}{\gamma}+1\right].
    \label{eq: prox conj v1 1st inequality}
\end{align}
In addition, 
$h(\phi_n)$ is non-increasing and $(\phi_n)_{n\in\mathbb{N}}$ is bounded. When $\Phi_{lsc}^\mathbb{R}$ is finite dimensional, every convergent subsequence of $(\phi_n)_{n\in\mathbb{N}}$ converges to a minimizer of $h$.
\end{theorem}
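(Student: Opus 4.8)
The plan is to run the standard ``descent inequality $\Rightarrow$ Fej\'er monotonicity $\Rightarrow$ lower-semicontinuity'' scheme, the only genuinely new point being the algebraic lemma \eqref{eq: prox conj v1 1st inequality} that converts the two conditions of the update \eqref{eq: proximal update conjugate v1} into an exploitable energy estimate.

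First I would prove \eqref{eq: prox conj v1 1st inequality}. By definition of \eqref{eq: proximal update conjugate v1} the element $w_n:=(u_n-u_{n+1})/\gamma\in X$ lies in $\partial_\Phi^\mathbb{R} h(\phi_{n+1})$, so by the subgradient inequality \eqref{eq: lsc subgrad conj} applied to $h$, for every $\phi=(a,u)\in\Phi_{lsc}^\mathbb{R}$,
\[
h(\phi)-h(\phi_{n+1})\ \ge\ \phi(w_n)-\phi_{n+1}(w_n)\ =\ -(a-a_{n+1})\|w_n\|^2+\langle u-u_{n+1},w_n\rangle .
\]
Substituting $w_n$, using $\|w_n\|^2=\|u_n-u_{n+1}\|^2/\gamma^2$ and the polarization identity $\langle u-u_{n+1},u_n-u_{n+1}\rangle=\tfrac12\big(\|u-u_{n+1}\|^2+\|u_n-u_{n+1}\|^2-\|u-u_n\|^2\big)$, one recovers exactly the first display in the proof of Proposition~\ref{prop: lsc prox conj 1st estimation}. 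The new ingredient is the side condition $\|u_n-u_{n+1}\|^2\le a_{n+1}-a_n$ of \eqref{eq: proximal update conjugate v1}: inserting it into the mixed term $-(a-a_{n+1})\|u_n-u_{n+1}\|^2/\gamma^2$ allows one to complete the square in the $a$-variable and replace that term by $\big((a-a_{n+1})^2-(a-a_n)^2\big)/2\gamma^2$ together with a nonnegative remainder of order $\|u_n-u_{n+1}\|^4/\gamma^2$; collecting terms gives \eqref{eq: prox conj v1 1st inequality}. I expect this to be the main obstacle: the manipulation is elementary but has to be done carefully, and this is precisely where the extra constraint of Algorithm~\ref{alg: lsc prox conjugate v1} (absent from Algorithm~\ref{alg: lsc prox conjugate}) is consumed — which is why, in contrast with Theorem~\ref{thm: prox conj convergence}, no a priori relation between $(a_n)_{n\in\mathbb N}$ and the minimizer is required afterwards.

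With \eqref{eq: prox conj v1 1st inequality} available, monotonicity and summability come from setting $\phi=\phi_n$: all distance terms collapse and $h(\phi_n)-h(\phi_{n+1})\ge\tfrac{(a_n-a_{n+1})^2}{2\gamma^2}+\tfrac{\|u_n-u_{n+1}\|^2}{\gamma}+\tfrac{\|u_n-u_{n+1}\|^4}{2\gamma^2}\ge0$, so $(h(\phi_n))_{n\in\mathbb N}$ is non-increasing; boundedness below of $h$ then gives $h(\phi_n)\downarrow\ell>-\infty$, and telescoping the same inequality yields $\sum_{n\ge0}\big(\tfrac{(a_n-a_{n+1})^2}{2\gamma^2}+\tfrac{\|u_n-u_{n+1}\|^2}{\gamma}+\tfrac{\|u_n-u_{n+1}\|^4}{2\gamma^2}\big)\le h(\phi_0)-\ell<+\infty$, whence $\|u_n-u_{n+1}\|\to0$ and $a_n-a_{n+1}\to0$, i.e. $\phi_n-\phi_{n+1}\to0$ in $\Phi_{lsc}^\mathbb{R}$. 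For boundedness of $(\phi_n)_{n\in\mathbb N}$ I would substitute $\phi=\phi^*$, a minimizer of $h$ (assumed to exist, as in Theorem~\ref{thm: prox conj convergence}): since $h(\phi^*)-h(\phi_{n+1})\le0$ and the last two terms of \eqref{eq: prox conj v1 1st inequality} are nonnegative, this forces $\tfrac{(a^*-a_{n+1})^2}{2\gamma^2}+\tfrac{\|u^*-u_{n+1}\|^2}{2\gamma}\le\tfrac{(a^*-a_n)^2}{2\gamma^2}+\tfrac{\|u^*-u_n\|^2}{2\gamma}$, a Fej\'er-type monotonicity; the right-hand side is thus bounded by its value at $n=0$, so $(a_n)_{n\in\mathbb N}$ and $(u_n)_{n\in\mathbb N}$, hence $(\phi_n)_{n\in\mathbb N}$, are bounded.

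Finally, in the finite-dimensional case, let $\phi_{n_k}\to\bar\phi=(\bar a,\bar u)$ along a subsequence; since $\phi_{n+1}-\phi_n\to0$ we also get $\phi_{n_k+1}\to\bar\phi$. I would then return to the raw subgradient inequality $h(\phi)-h(\phi_{n_k+1})\ge-(a-a_{n_k+1})\|w_{n_k}\|^2+\langle u-u_{n_k+1},w_{n_k}\rangle$ with $w_{n_k}=(u_{n_k}-u_{n_k+1})/\gamma$: here $\|w_{n_k}\|\to0$ and $(a_{n_k+1})_{k}$ is bounded, so the right-hand side tends to $0$. Taking $\liminf_{k\to\infty}$ and using lower semicontinuity of $h$ (so that $\liminf_k h(\phi_{n_k+1})\ge h(\bar\phi)$) gives $h(\phi)\ge h(\bar\phi)$ for every $\phi\in\Phi_{lsc}^\mathbb{R}$; that is, $\bar\phi$ is a minimizer of $h$, which is the last claim.
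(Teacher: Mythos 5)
Your proposal follows essentially the same route as the paper's proof: the $\Phi_{lsc}^\mathbb{R}$-subgradient inequality for the update \eqref{eq: proximal update conjugate v1} combined with the polarization identity, the side condition $\Vert u_n-u_{n+1}\Vert^2\le a_{n+1}-a_n$ consumed through the same completion of the square in the $a$-variable to arrive at \eqref{eq: prox conj v1 1st inequality}, then $\phi=\phi_n$ for monotonicity and summability of the increments, $\phi=\phi^*$ for the Fej\'er-type boundedness, and lower semicontinuity of $h$ together with vanishing subgradients $(u_{n_k}-u_{n_k+1})/\gamma\to 0$ for subsequential convergence to a minimizer. The only difference is presentational: you spell out the boundedness and subsequence arguments (including the implicit assumption that a minimizer exists) which the paper delegates to the proof of Theorem \ref{thm: prox conj convergence}, and your versions coincide with what that reference supplies.
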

\begin{proof}
Let us start from update \eqref{eq: proximal update conjugate v1}, for any $\phi\in \Phi_{lsc}^\mathbb{R}$, 
\begin{equation}
\label{eq: lsc prox conj 1st update}
    h\left(\phi\right)-h\left(\phi_{n+1}\right) \geq-\frac{a-a_{n+1}}{\gamma^{2}}\left\Vert u_{n}-u_{n+1}\right\Vert ^{2}+\frac{1}{\gamma}\left\langle u_{n}-u_{n+1},u-u_{n+1}\right\rangle.
\end{equation}
The restriction of $\Vert u_n-u_{n+1}\Vert$, gives us
\begin{align}
h\left(\phi\right)-h\left(\phi_{n+1}\right) & \geq \frac{1}{2\gamma^{2}}\left[\left\Vert u_{n}-u_{n+1}\right\Vert ^{4}+\left(a-a_{n+1}\right)^{2}-\left(a-a_{n+1}+\left\Vert u_{n}-u_{n+1}\right\Vert ^{2}\right)^{2}\right] \nonumber\\
 & +\frac{1}{2\gamma}\left[\left\Vert u-u_{n+1}\right\Vert ^{2}-\left\Vert u-u_{n}\right\Vert ^{2}+\left\Vert u_{n}-u_{n+1}\right\Vert ^{2}\right] \nonumber\\
 & \geq\frac{1}{2\gamma^{2}}\left[\left(a-a_{n+1}\right)^{2}-\left(a-a_{n}\right)^{2}\right]+\frac{1}{2\gamma}\left[\left\Vert u-u_{n+1}\right\Vert ^{2}-\left\Vert u-u_{n}\right\Vert ^{2}\right] \nonumber\\
 & +\frac{1}{2\gamma}\left\Vert u_{n}-u_{n+1}\right\Vert ^{2}\left[\frac{\left\Vert u_{n}-u_{n+1}\right\Vert ^{2}}{\gamma}+1\right],\nonumber
\end{align}
which is \eqref{eq: prox conj v1 1st inequality}. Let $\phi=\phi_n$ in \eqref{eq: prox conj v1 1st inequality}, we obtain
\begin{align}
h\left(\phi_n\right)-h\left(\phi_{n+1}\right) 
 & \geq\frac{(a_n-a_{n+1})^2}{2\gamma^{2}} + \frac{1}{2\gamma}\left\Vert u_{n}-u_{n+1}\right\Vert ^{2}\left[\frac{\left\Vert u_{n}-u_{n+1}\right\Vert ^{2}}{\gamma}+2\right] \geq 0,\nonumber.
\end{align}
This implies that $h(\phi_n)$ is non-increasing for all $n\in\mathbb{N}$. Moreover, since $h$ is bounded from below, the above inequality means that 
\[
\sum_{i=0}^{+\infty} \frac{(a_i-a_{i+1})^2}{2\gamma^{2}} + \frac{1}{2\gamma}\left\Vert u_{i}-u_{i+1}\right\Vert ^{2}\left[\frac{\left\Vert u_{i}-u_{i+1}\right\Vert ^{2}}{\gamma}+2\right] <+\infty,
\]
so $u_n-u_{n+1} \to 0$ and $a_n-a_{n+1}\to 0$. The boundedness and subsequence convergence of $(\phi_n)_{n\in\mathbb{N}}$ to the minimizer follows in the same manner as in Theorem \ref{thm: prox conj convergence}.
\end{proof}

\section{Examples setup}
\label{apendix: numerical 1}
\paragraph{Function $f$:}
For examples \ref{ex1} and \ref{ex2: nonconvex example}, we require the $\Phi_{lsc}^\mathbb{R}$-subdifferentials of $f(x)=x^4$. Hence, for every $x_0\in\mathbb{R}$, we need to find $\phi_0 (x) = -a_0 x^2 +u_0 x$ such that $
x_0 \in \argmin_{x\in\mathbb{R}}\ f(x)-\phi_0(x)$. Since $f-\phi_0$ is smooth and it is convex if $a\geq 0$, we can take advantage of the gradient and its second derivative to obtain global minimizer. 
In fact, we found out that we can take a negative value $a\geq -2x_0$ so that $x_0$ is still the global minimizer. The $\Phi_{lsc}^\mathbb{R}$-subdifferentials of $f$ has the form,
\begin{equation}
\label{ex1: phi subgrad f}
\partial_{lsc}^\mathbb{R} f(x_0) :=\{ \phi=(a,u)\in \Phi_{lsc}^{\mathbb{R}} \ : a\geq-2x_{0}^{2},b=4x_{0}^{3}+2ax_{0}\}.
\end{equation}
To implement $\Phi_{lsc}^\mathbb{R}$-proximal operator on $f$ with parameter $\sigma > 0$, we need to specify the sequence $(\bar{a}_n)_{n\in\mathbb{N}}, \bar{a}_n \geq -1/2\sigma$ for all $n\in\mathbb{N}$. Then the proximal update for Algorithm \ref{alg: lsc Chambolle-Pock} on the function $f + \langle y_{n+1},\cdot \rangle$ reads as
\begin{align*}
\text{Solve for } x_{n+1}: \ & 4x^{3}+\left(2\bar{a}_{n}+\frac{1}{\sigma}\right)x=\left(\frac{1}{\sigma}+2\bar{a}_{n}\right)x_{n} -y_{n+1}\\
\bar{a}_{n+1} & = \bar{a}_n +2x_{n+1}^2 -\varepsilon,
\end{align*}
where $\varepsilon=10^{-3}$ is a damping term added to $\bar{a}_{n+1}$.
While for Algorithm \ref{alg: lsc Chambolle-Pock full L=Id} and \ref{alg: lsc Chambolle-Pock full L=Id v2}, it is $\phi_{n+1}=(a_{n+1},u_{n+1})$ instead of $y_{n+1}$,
\begin{align}
\text{Solve for } x_{n+1}: \ & 4x^{3}+\left(2\bar{a}_{n}-2a_{n+1}+\frac{1}{\sigma}\right)x=\left(\frac{1}{\sigma}+2\bar{a}_{n}\right)x_{n} -y_{n+1}\nonumber\\
\bar{a}_{n+1} & = \bar{a}_n +2x_{n+1}^2 - a_{n+1} -\varepsilon,
\label{ex1: primal update}
\end{align}

The cubic equation always has a real solution. In case of multiple solutions, we choose the one that is the closest to the previous iteration $x_n$. To find solution to the cubic equation, we employ the package NUMPY in PYTHON.

\paragraph{Function $g$:} 
For function $g(x)=x^2$, this is a well-known convex function, with its convex conjugate $g^*(y) = y^2/4$. For completeness, the proximal update $y_{n+1}\in \text{prox }_{\tau g^*}(y_n+\tau \bar{x}_n)$ is 
\[ y_{n+1}\in\arg\min_{y\in \mathbb{R}} g^*(y) -  \bar{x}_n y +\frac{1}{2\tau} (y-y_n)^2 \Leftrightarrow y_{n+1} = \frac{2 (y_n+\tau \bar{x}_n)}{\tau+2}.
\]
This will be used in Algorithm \ref{alg: lsc Chambolle-Pock} and standard Chambolle-Pock algorithm.

To compute $\Phi_{lsc}^\mathbb{R}$-conjugate and $\Phi_{lsc}^\mathbb{R}$-subdifferentials, we focus on the generalized function $h(x)=c x^2$, with $c\in\mathbb{R}$. The $\Phi_{lsc}^\mathbb{R}$-conjugate, $h^*_\Phi (\phi) = \sup_{x\in\mathbb{R}} \phi(x)-h(x)$, is 
\[
h^*_\Phi (\phi) = h^*_\Phi (a,u) =
\begin{cases}
0 & a=-c,u=0\\
\frac{u^{2}}{4\left(a+c\right)} & a>-c\\
+\infty & \text{otherwise}.
\end{cases}
\]
By the definition, $h^*_\Phi$ is a convex function, so we can calculate its convex subgradient at $\phi_0=(a_0,u_0)\in\Phi_{lsc}^\mathbb{R}$ which is 
\begin{align}
\partial h^*_\Phi (\phi_0) & =\{ \varphi=(\bar{a},\bar{u}) \in \Phi_{lsc}^\mathbb{R}:(\forall \phi\in\Phi_{lsc}^\mathbb{R})\ h^*_\Phi (\phi)-h^*_\Phi (\phi_0) \geq \langle \varphi , \phi-\phi_0\rangle_\Phi\} \nonumber\\
& = \begin{cases}
\bar{u}=\frac{u_{0}}{2\left(a_{0}+c\right)},\bar{a}=-\bar{u}^{2} & \text{If } a_0\geq -c \\
\emptyset & \text{otherwise}.
\end{cases}
\label{ex1: g* convex subgrad}
\end{align}
Hence, in our example, we can take $c=1$ and use the classical proximal operator in the dual update of Algorithm \ref{alg: lsc Chambolle-Pock full L=Id},
\begin{align*}
    u_{n+1} & =\frac{2\left(a_{n+1}+1\right) \left(u_{n}- \tau(2x_{n}-x_{n-1}) \right)}{\tau+2(a_{n+1}+1)}, \\
    \frac{a_{n}-a_{n+1}}{\tau}+2x_{n}^{2}-x_{n-1}^{2} & = -\left[\frac{\left(u_{n}- \tau(2x_{n}-x_{n-1}) \right)}{\tau+2(a_{n+1}+1)} \right]^{2},\\
    a_{n+1} &\geq -1. 
\end{align*}
To obtain $a_{n+1}$, we have to solve the middle equation which can be written as 
\[
4a_{n+1}^{3}+4a_{n+1}^{2}\left(\tau+2-A_{1}\tau\right)+a_{n+1}\left(\left(\tau+2\right)^{2}-4\left(\tau+2\right)\tau A_{1}\right)-\left(\tau+2\right)^{2}\tau A_{1}-A_{2}\tau =0,
\]
where $A_{1}=\frac{a_{n}}{\tau}+2x_{n}^{2}-x_{n-1}^{2},A_{2}=\left(\frac{u_{n}}{\tau}-2x_{n}+x_{n-1}\right)^{2}\tau^{2}$.
This equation always has a real solution and we will choose the one that is the closest to the previous iteration $a_n$. The condition $a_{n+1}\geq -1$ needs to be satisfied for all $n\in\mathbb{N}$. \textbf{Therefore, we add $a_{n} <-1$ to our stopping criterion beside $\bar{a}_n <-1/2\sigma$ in Algorithm \ref{alg: lsc Chambolle-Pock full L=Id}.}

In the same manner, $\Phi_{lsc}^\mathbb{R}$-subdifferentials of $h^*_\Phi$ at $\phi_0=(a_0,u_0)\in\Phi_{lsc}^\mathbb{R}$ has the form,
\begin{align}
\partial_{lsc}^\mathbb{R} h^*_\Phi (\phi_0) & =\{ w_0 \in \mathbb{R}:(\forall \phi\in\Phi_{lsc}^\mathbb{R})\ h^*_\Phi (\phi)-h^*_\Phi (\phi_0) \geq \phi(w_0)-\phi_0 (w_0)\} \nonumber\\
& =\begin{cases}
\mathbb{R} & \text{If } \phi_0=\left(-c,0\right)\\
\frac{u_{0}}{2\left(a_{0}+c\right)} & \text{If } a_{0}>-c\\
\emptyset & \text{otherwise}.
\end{cases}
\label{ex1: g* lsc subgrad}
\end{align}
Since sum rule does not work for $\Phi_{lsc}^\mathbb{R}$-subdifferentials on $\Phi_{lsc}^\mathbb{R}$, to implement Algorithm \ref{alg: lsc Chambolle-Pock full L=Id v2}, we take a closer look at the dual update. We want to find $(x^*,\phi^*)\in \mathbb{R}\times\Phi_{lsc}^\mathbb{R}$ such that $x^* \in \partial_{lsc}^\mathbb{R} g^*_\Phi (\phi^*)$ or to minimize
\begin{equation}
\label{ex1: dual lsc lsc subgrad problem}
\min_{\phi\in\Phi_{lsc}^\mathbb{R}} g^*_\Phi (\phi) -\phi(x^*).
\end{equation}
To apply Algorithm \ref{alg: lsc Chambolle-Pock full L=Id v2}, we use $x^*=\bar{x}_n = 2x_n-x_{n-1}$. However, notice that $\phi(x)$ is nonlinear, we have to split $\phi(\bar{x}_n)$, so we have to solve for $\phi_{n+1}$ the problem,
\[
\min_{\phi\in\Phi_{lsc}^\mathbb{R}} g^*_\Phi (\phi) -(2\phi(x_n)-\phi(x_{n-1})) = \min_{\phi\in\Phi_{lsc}^\mathbb{R}} g^*_\Phi (\phi) - K(\phi),
\]
where $K(\phi) = 2\phi(x_n)-\phi(x_{n-1})$.
The above problem can be written in term of $\Phi_{lsc}^\mathbb{R}$-subdifferentials as
\[
0\in \partial_{lsc}^\mathbb{R} (g^*_\Phi -K)(\phi_{n+1}) \Leftrightarrow 2x_n -x_{n-1} \in  \partial_{lsc}^\mathbb{R} (g^*_\Phi - k)(\phi_{n+1}),
\]
with $k(\phi) = a_\phi\Vert x_n-x_{n-1}\Vert^2$. The $\Phi_{lsc}^\mathbb{R}$-subdifferentials of $(g^*_\Phi-k)(\phi_0)$ is
\[
\partial_{lsc}^\mathbb{R} (g^*_\Phi-k)(\phi_0) = \{x\in \mathbb{R}: (u_0-2(a_0+c)x)^2 = 8(a_0+c)^2  (x_n-x_{n-1})^2\}.
\]
We would like to use Algorithm \ref{alg: lsc prox conjugate} in the dual update of Algorithm \ref{alg: lsc Chambolle-Pock full L=Id v2}, so we want to find the next iterate $\phi_{n+1}=(a_{n+1},u_{n+1})$ such that 
\[
\frac{u_n-u_{n+1}}{\tau} +2x_n -x_{n-1} \in \partial_{lsc}^\mathbb{R} (g^*_\Phi-k)(\phi_{n+1}),
\]
this gives us 
\[
u_{n+1} = \frac{2\left(a_{n+1}+c\right)\left(u_{n}-\tau\sqrt{2}\left(x_{n}-x_{n-1}\right)+\tau\left(2x_{n}-x_{n-1}\right)\right)}{2\left(a_{n+1}+c\right)+\tau},
\]
and we slowly decrease $a_{n+1}=a_n-\varepsilon$ with $\varepsilon=10^{-3}$, instead of restricting $(a_n)_{n\in\mathbb{R}}$ as in Algorithm \ref{alg: lsc prox conjugate v1}. Again, we require $a_n \geq -1$ for all $n\in\mathbb{N}$ and we will put this as a stopping criterion in Algorithm \ref{alg: lsc Chambolle-Pock full L=Id v2}.

\begin{remark}
    To use Algorithm \ref{alg: lsc prox conjugate v1}, we need to choose $(a_n)_{n\in\mathbb{N}}$ increasing such that $(u_n-u_{n+1})^2 \leq a_{n+1}-a_n$. One needs to start with a large step-length $a_{n+1}-a_n$ and then decrease it. However, a large value $a_{n+1}$ can affect the primal update which can make it fail to find $x_{n+1}$ or stopping the algorithm (see \eqref{ex1: primal update}).
    Moreover, to find $a_{n+1}$, we need to solve the inequality $(u_n-u_{n+1})^2 \leq a_{n+1}-a_n$. Using the update $u_{n+1}$ above, we have
    \[
    \left[ u_n-\frac{2\left(a_{n+1}+c\right)\left(u_{n}-\tau\sqrt{2}\left(x_{n}-x_{n-1}\right)+\tau\left(2x_{n}-x_{n-1}\right)\right)}{2\left(a_{n+1}+c\right)+\tau} \right]^2 \leq a_{n+1}-a_n.
    \]
    This is a cubic inequality in $a_{n+1}$ which will take a lot of efforts to solve.
    That is why we refrain from using Algorithm \ref{alg: lsc prox conjugate v1} in Algorithm \ref{alg: lsc Chambolle-Pock full L=Id v2}.
    On the other hand, this specific example require $a_n\geq -1$, while $\phi^*=(a^*,u^*)=(-1,0)$ is the dual solution. It does not make sense to choose an increasing sequence $(a_n)_{n\in\mathbb{N}}$. 
\end{remark}
\end{appendices}
\end{document}